		 \theoremstyle{plain}
      \newtheorem{theorem}{Theorem}[section]
      \newtheorem{lemma}[theorem]{Lemma}
      \newtheorem*{conjecture}{Conjecture}
      \newtheorem{coro}[theorem]{Corollary}
			\newtheorem*{theoremA}{Theorem A}
			\newtheorem*{theoremD}{Theorem D}
			\newtheorem*{coroB}{Corollary B}
			\newtheorem*{coroC}{Corollary C}
      \newtheorem{prop}[theorem]{Proposition}
      \newtheorem{defi}[theorem]{Definition}
       \numberwithin{equation}{section}
      \theoremstyle{plain}
      \newtheorem{remark}[theorem]{Remark}
\begin{document}

%



  \author{Adrien Boyer and Dustin Mayeda} 
   \thanks{ adrien.boyer@weizmann.ac.il, Weizmann institute of Science, Rehovot, Israel.\\ dmayeda@tx.technion.ac.il, Technion, Haifa, Israel.\\
   This work is supported by ERC Grant 306706. }

   \title{Equidistribution, Ergodicity and Irreducibility associated with Gibbs measures}

   \begin{abstract}
 We generalize an equidistribution theorem \`a la Bader-Muchnik (\cite{BM}) for operator-valued measures constructed from a family of boundary representations associated with Gibbs measures in the context of convex cocompact discrete group of isometries of a simply connected connected Riemannian manifold with pinched negative curvature. We combine a functional analytic tool, namely the property RD of hyperbolic groups (\cite{Jo} and \cite{Ha}), together with a dynamical tool: an equidistribution theorem of Paulin, Pollicott and Schapira inspired by a result of Roblin (\cite{Ro}). In particular, we deduce irreducibility of these new classes of boundary representations. 
    \end{abstract}

   \subjclass[2010]{Primary 37A25, 37A30, 37A55, 37A60, 22D40; Secondary 43A90, 47A35}

   \keywords{Gibbs densities, boundary representations, ergodic theorems, irreducibility, equidistribution, property RD}


   \dedicatory{}



   \maketitle

\section{Introduction}

Viewing the group $SL(2,\mathbb{R})$ as a group acting by isometries of the hyperbolic plane we have an induced action on the geometric boundary of the hyperbolic plane which is identified with the circle.  This boundary action is quasi-invariant with respect to Lebesgue measure (i.e. the sets of Lebesgue measure zero are preserved under the action) and so there is a naturally associated unitary representation of $SL(2,\mathbb{R})$ on $L^2(\mathbb{S}^1)$ called the quasi-regular representation.  The quasi-regular representation is irreducible and is part of a family of irreducible unitary representations of $SL(2,\mathbb{R})$ on $L^2(\mathbb{S}^1)$ called the principal series which forms one of the three families comprising the unitary dual.  For a general locally compact group $G$, especially when $G$ is a discrete countable group, there is no hope of computing its unitary dual so we will restrict ourselves to the problem of determining when the associated quasi-regular representation of a $G$ quasi-invariant action is irreducible.  From a dynamical viewpoint the associated quasi-regular representation is interesting because it reflects the ergodic theoretic properties of the action such as ergodicity and mixing.

Early on, Furstenberg \cite{F} showed that when $G$ is a semisimple Lie group the space $G/P$ equipped with Haar measure where $P$ is a minimal parabolic subgroup, nowadays called the Poisson-Furstenberg boundary,   can be realized as the Poisson boundary of a random walk on a lattice in $G$. 
Motivated by these results we further restrict the problem which we state as the following conjecture of Bader and Muchnik \cite{BM} :

\begin{conjecture}
For a locally compact group $G$ and a spread-out probability measure $\mu$ on $G$, the quasi-regular representation associated to a $\mu$-boundary of $G$ is irreducible.
\end{conjecture}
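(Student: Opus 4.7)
The plan is to follow the strategy initiated by Bader-Muchnik and pursued in the present paper, namely to reduce irreducibility of the quasi-regular representation $\pi$ on $L^{2}(B,\nu)$, where $(B,\nu)$ is a $\mu$-boundary, to an equidistribution statement for the convolution powers $\mu^{*n}$. By Dixmier's criterion, $\pi$ is irreducible precisely when the von Neumann algebra it generates contains a rank-one projection. Equivalently, it suffices to produce a sequence of operators $A_{n}$ in the weak operator closure of the $*$-algebra generated by $\pi(G)$ which converges to the orthogonal projection onto the constants $\mathbf{1} \in L^{2}(B,\nu)$.

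The first step would be to prove an analogue of Roblin's equidistribution theorem in this abstract probabilistic setting: after an appropriate renormalization involving the spectral radius of convolution by $\mu$, the operators
\[
T_{n} \;=\; \pi(\mu^{*n}) \;=\; \sum_{g\in G} \mu^{*n}(g)\,\pi(g)
\]
should concentrate on the constants, in the sense that $\langle T_{n}\xi,\eta\rangle$ is asymptotic to $\langle \xi,\mathbf{1}\rangle\langle \mathbf{1},\eta\rangle$ for $\xi,\eta$ in a dense class. In the geometric setting of the present paper this statement is supplied by mixing of the geodesic flow against Gibbs measures; in the generality of the conjecture the natural input is instead the convergence of the $\mu$-random walk to the Poisson-Furstenberg boundary, together with the uniqueness of the $\mu$-stationary measure $\nu$ on any $\mu$-boundary. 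A second step would then amplify the pointwise matrix coefficient bound into convergence of the desired $A_{n}$ in the weak operator topology, by means of a property RD-type inequality controlling $\|\pi(f)\|$ by a suitably weighted $\ell^{2}$-norm of $f$ on $G$.

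The main obstacle, and the reason this statement remains a conjecture, is that neither of these two ingredients is currently available at such a level of generality. Property RD is known only for a restricted class of groups (word hyperbolic groups and a few exceptional classes, notably cocompact lattices in $\tilde A_{2}$-buildings and certain mapping class groups), and no substitute is known beyond this list. Symmetrically, a Roblin type equidistribution theorem outside the setting of a mixing geometric flow has not been formulated. A realistic intermediate program would be to first establish the conjecture for all Gromov hyperbolic locally compact groups by extending the Bader-Muchnik/Roblin machinery beyond the convex cocompact discrete case treated here, then to tackle higher rank semisimple Lie groups via the Furstenberg boundary $G/P$ using irreducibility of the principal series, and only then to seek an axiomatic unification in the spirit of Bader-Furman boundary theory, where the role of property RD would be played by a temperedness condition on $\pi$ relative to $\mu$.
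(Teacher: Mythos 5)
The statement you were asked about is the Bader--Muchnik conjecture, which the paper records precisely as an open \emph{conjecture} and never proves: its actual results (Theorem A, Corollaries B and C, Theorem D) settle only a special geometric instance, namely boundary representations of convex cocompact groups acting on $\partial X$ with the Patterson density (Gibbs stream) measure classes, and the authors even stress that it is unclear whether the harmonic measures of Connell--Muchnik fall under existing irreducibility criteria. Your proposal correctly recognizes this status: it is not a proof but a program, and the two ingredients you isolate are exactly the two steps the paper executes in its restricted setting --- uniform boundedness of the renormalized operators via a property RD inequality transferred to the quasi-regular representation through amenability of the boundary action (the paper's Step 1, Propositions \ref{RD} and \ref{uniform}), and identification of the weak$^{*}$ limit as $m(f)P_{\mathbf{1}_{\partial X}}$ via a Roblin/Paulin--Pollicott--Schapira equidistribution theorem (the paper's Step 2, Proposition \ref{applicationroblin}). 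So there is no gap to flag relative to the paper, because the paper contains no proof of the conjecture either; just be aware that, as written, your text proves nothing --- it restates the known strategy and honestly names the missing inputs (an equidistribution theorem for $\mu^{*n}$ on a general $\mu$-boundary and an RD-type or temperedness substitute beyond hyperbolic-like groups), which is a fair description of why the general statement remains open.
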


For the rest of the paper we will restrict ourselves to the case when $G$ is a discrete countable group.  Analogously to the case of $SL(2,\mathbb{R})$ the action of the free group $\mathbb{F}_n$ on its boundary is quasi-invariant with respect to the Patterson-Sullivan measure class and thus there is the associated quasi-regular representation.  Fig\`a-Talamanca and Picardello (see \cite{F1} and \cite{F3}) construct the analog of the principal series which are unitary representations of $\mathbb{F}_n$ on $L^2(\partial \mathbb{F}_n)$ and show they are all irreducible.  For homogeneous trees Fig\`a-Talamanca and Steger \cite{F2} show similar irreducibility results for lattices in the automorphism group.  Kuhn and Steger \cite{KS} have also constructed different examples of irreducible representations of the free group.  The conjecture has also been solved for some actions of simple algebraic groups by Bekka and Cowling in \cite{BC}.  When $G$ is a lattice in a Lie group Cowling and Steger \cite{CS} showed that the irreducible representations of the ambient semisimple Lie group restricted to $G$ remain irreducible.  In particular the quasi-regular representation
of $SL(2,\mathbb{R})$ on $L^2(\mathbb{S}^1)$ restricted to lattices is irreducible.  Later on in the context of CAT(-1) spaces for which a discrete group of isometries $G$ acts cocompactly, Connell and Muchnik (see \cite{CM} and \cite{CM2}) proved when the geometric boundary is equipped with a certain class of Gibbs measures that it can be realized as the Poisson boundary of a random walk in $G$.  This result led Bader and Muchnik \cite{BM} to prove the conjecture for the action of the fundamental group of a compact negatively curved manifold on the geometric boundary of the universal cover of the manifold, endowed with the Patterson-Sullivan measure class. Recently the first named author has also generalized the main theorem of Bader and Muchnik in \cite{BM} to the context of CAT(-1) spaces and so irreducibility of boundary representations associated with Patterson-Sullivan measures. Moreover Garncarek \cite{LG} has generalized the irreducibility result of \cite{BM} for the action of a Gromov-Hyperbolic group on its geometric boundary endowed with the Patterson-Sullivan measure class. He has also deduced thanks to the work of \cite{BMH} that if a symmetric random walk on a Gromov-Hyperbolic group has finite exponential moment with respect to a word metric and such that the associated Green metric satisfies the \emph{Ancona inequality} then the action on the Poisson boundary with respect to the harmonic measure is irreducible thanks to the work of \cite{BHM}.  It is not clear at all to us if any of the measures constructed in \cite{CM2} have finite exponential moment and satisfies the\emph{ Ancona inequality} and therefore it is not clear at all that our result of irreducibility would follow from an application of these two results. Hence it legitimates our dynamical approach to prove irreducibility of quasi-regular representations associated with the class of measure arising as conditional measures of the \emph{Gibbs measures}, called \emph{Gibbs streams} in \cite{CM2} or also called \emph{Patterson densities} in \cite{Pau}; generalizing the Patterson-Sullivan measures class.

Bader and Muchnik prove in \cite[Theorem 3]{BM} an equidistribution theorem for some operator-valued measures associated with Patterson-Sullivan measures. This theorem can be thought of as a generalization of Birkhoff-von Neumann's ergodic theorem for quasi-invariant measures for fundamental groups acting on the geometric boundary of universal covers of compact negatively curved manifolds endowed with the Patterson-Sullivan measures. These quasi-regular representations are called \textit{boundary representations}. It turns out that the irreducibility of boundary representations follows from this generalization of Birkhoff-von Neumann's ergodic theorem.  
  
We generalize the results of Bader and Muchnik to the action of a convex cocompact discrete subgroup of isometries of a pinched negatively curved manifold on its boundary endowed with the \emph{Gibbs streams} measure class  rather called in this paper \emph{Patterson densities} measure class.  The Patterson densities are constructed by first assigning a weight to each element of the orbit and then proceeding as in the construction of the Patterson-Sullivan measures which we think of as the unweighted case.

Historically it was Sinai who first merged the field of equilibrium statistical mechanics from which the concept of Patterson density is imported from with the field of hyperbolic smooth dynamical systems.  Given a H\"older-continuous potential $F$ on the unit tangent bundle of a compact negatively curved manifold, the pressure of $F$ associated with the geodesic flow is given by

$$P(F) = \sup\limits_{m}  \Bigg\{ h_{m} - \int F dm \Bigg\}, $$

where the supremum is taken over all measures on the unit tangent bundle which are invariant under the fundamental group of the manifold and the geodesic flow and $h_{m}$ is the metric entropy of $m$ associated with the geodesic flow.  Bowen \cite{B} proved for negatively curved manifolds that there exists a unique measure called the Gibbs measure which achieves the supremum and is in fact the eigenmeasure associated to the transfer operator of $F$.  As we said, the \emph{Patterson densities} arise as conditional measures of the Gibbs measure and when $F = 0$ the Patterson densities are the Patterson-Sullivan measures and the Gibbs measure is the Bowen-Margulis-Sullivan measure that maximizes the entropy.

  The main tools of this paper are the property RD (Rapid Decay) that hyperbolic groups satisfy (see \cite{Ha} and \cite{Jo}) combined with a spectral characterization of the amenability of the action on the boundary (see \cite{K} and \cite{ADR}) together with an equidistribution theorem of Paulin-Pollicott-Schapira inspired by Roblin's equidistribution theorem which is itself based on the mixing property of the geodesic flow. Indeed this idea of using the mixing property of the geodesic flow goes back to Margulis \cite{M} who used it in order to count the closed geodesics on compact negatively curved manifolds. However, the first object to understand is the \emph{Harish-Chandra function} associated with Patterson densities. This function plays a  major role in harmonic analysis of spherical functions and in the theory of irreducible representations of semismple Lie groups, see for example \cite{GV}.

 \subsection*{Notation }\label{results}

Let $M$ be a complete connected Riemannian manifold with pinched negative curvature. Let $X=\widetilde{M}$, let $q : X \rightarrow M$ be a universal Riemannian covering map
with a covering group $\Gamma$ viewed as a non-elementary discrete group of isometries of $X$, denote the sphere at infinity by $\partial X$ and endow  $\overline{X}=X\cup \partial X$ with the cone topology. 

The limit set of $\Gamma$ denoted by $\Lambda_{\Gamma}$ is the set of all accumulation points in $\partial X$ of an orbit. Namely $\Lambda_{\Gamma}:=\overline{\Gamma x}\cap \partial X$, with the closure in $\overline{X}$. Notice that the limit set does not depend on the choice of $x\in X$. We denote by $\Omega\Gamma$ the subset of $T^{1}X$ of tangent vectors to the geodesic lines in $X$ whose endpoints both lie in $\Lambda_\Gamma$. Following the notations in \cite{CM}, define the geodesic hull $GH(\Lambda_{\Gamma})$ as the union of all geodesics in $X$ with both endpoints in $\Lambda_{\Gamma}$. The convex hull of $\Lambda_{\Gamma}$ denoted by $CH(\Lambda_{\Gamma})$, is the smallest convex subset of $X$ containing $GH(\Lambda_{\Gamma})$. In CAT(-1) spaces we always have $CH(\Lambda_{\Gamma})=GH(\Lambda_{\Gamma})$. We say that $\Gamma$ is \emph{convex cocompact} if it acts cocompactly on  $CH(\Lambda_{\Gamma})$.

Let $p:T^{1}X\rightarrow X$ be the base point projection map from the unit tangent bundle to $X$. Let $g = (g_{t})_{t\in \mathbb{R}}$ be the geodesic flow on $T^{1}M$ and $ \widetilde{g}= (\widetilde{g}_{t})_{t\in \mathbb{R}}$ the one on $T^{1}X$ and equip the unit tangent bundle with the following metric $$d_{T^{1}X}({\rm v},{\rm w})=\frac{1}{\sqrt{\pi} }\int_{\mathbb{R}}d\big(p(g_{t} ({\rm v})),p(g_{t}({\rm w}))\big){\rm e}^{-t^{2}/2}dt,$$ where we use the notation ${\rm v}$ for an element in $T^{1}X$ (and $v$ for an element of $\partial X$).\\   Let $F :
T^{1}M \rightarrow \mathbb{R}$ be a H\"older-continuous map, called a \emph{potential}, and let $\widetilde{F} = F \circ q$ be the $\Gamma$-invariant potential associated on $T^{1}X$. In this work, as it has been suggested by Kaimanovich, we assume
 that \emph{$\widetilde{F}$ is symmetric}, that is $\widetilde{F}$ is invariant by the antipodal map 
 \begin{equation}
 \iota: {\rm v }\in T^{1}X \mapsto - {\rm v}\in T^{1}X.  
 \end{equation} 
 For all  $x,y \in X$, let us define $$ \int_{x}^{y}\widetilde{F}: =
\int_{0}^{d(x,y)} \widetilde{F}(\widetilde{g}_{t}\big({\rm v})\big) {\rm d}t $$
where ${\rm v}=(x,\vec{v}_{x,y})\in T^{1}X$ and $\vec{v}_{x,y}$ is the
unit tangent vector at $x$ to a geodesic from $x$ through $y$.
Set:
	\begin{equation}\label{distanceF}
	d^{F}(x,y):=\int_{x}^{y}\widetilde{F}.
	\end{equation}
A priori $d^{F}$ is not non-negative and is far to be a distance, neverthelss
the symmetry of $\widetilde{F}$ implies
\begin{equation} \label{sym}
d^{F}(x,y)=d^{F}(y,x).
\end{equation}

Define the Gibbs cocycle as
\begin{equation}\label{gibbs}
C^{F}_{v}(x,y):=\lim_{t\rightarrow +\infty} \int_{y}^{v_{t}}\widetilde{F}-\int_{x}^{v_{t}}\widetilde{F}=\lim_{t\rightarrow +\infty} d^{F}(y,v_{t})-d^{F}(x,v_{t}),
\end{equation}
where $v_{t}$ is any geodesic ray ending at a point $v$ in $\partial X$. 
Observe that if $\widetilde{F}=-1$ the Gibbs cocycle is nothing else than the Busemann cocycle, that is the horospherical distance from $x$ to $y$ relative to $v$.

The foundations of Patterson-Sullivan measures theory are in the important papers \cite{Pa}, \cite{Su}. See \cite{Bou},\cite{BMo}, and \cite{Ro} for more general results in the context of CAT(-1) spaces. These measures are also called \textit{conformal densities}. In this paper we are dealing with the \emph{Patterson density of $(\Gamma,F)$} where $F$ is the potential function defined above and $\Gamma$ a discrete group of isometries of $X$.

Recall that $\gamma_{*}\nu$ means $\gamma_{*}\nu(B)=\nu(\gamma^{-1}B)$ where $\gamma$ is in $\Gamma$ and $B$ is a Borel subset of some measure space. 
More specifically we say that \emph{$\nu^{F}$ is a Patterson density of dimension $\sigma \in \mathbb{R}$ for $(\Gamma,F)$} if $\nu^{F}$ is a map which satisfies the following conditions:
\begin{itemize}
	\item $\nu^{F}$ is a map from $x\in X \mapsto \nu_{x}^F \in M(\overline{X})$, i.e. $\nu^{F}_{x}$ is a positive finite measure.
	\item For all $x$ and $y$ in $X$, $\nu_{x}^F$ and $\nu^{F}_{y}$ are equivalent, and we have $$\frac{d\nu^{F}_{y}}{d\nu^{F}_{x}}(v)={\rm e}^{{C^{F-\sigma}_{v}(x,y)}}.$$ 
	\item For all $\gamma \in \Gamma$, and for all $x \in X$ we have $\gamma_{*}\nu^{F}_{x}=\nu^{F}_{\gamma x}$.
\end{itemize}

	In this context define the critical exponent of $(\Gamma,F)$ for $c>0$ large enough as $$\sigma_{\Gamma,F}:=\limsup \frac{1}{n}\sum_{n-c\leq d(\gamma x,x)\leq n}{\rm e}^{{d^{F}(x,\gamma x)}} .$$ 
	
	Even if the construction of a Patterson density was not done in this general context with a potential function, the technique is exactly the same. We attribute the following proposition to Patterson in his seminal paper \cite{Pa}, ensuring the existence of a Patterson density:
	\begin{prop}(S-J. Patterson)
	If $\sigma_{\Gamma,F}<\infty$, then there exists at least one Patterson density of dimension $\sigma_{\Gamma,F}$ with support exactly equal to $\Lambda_{\Gamma}$.
	\end{prop}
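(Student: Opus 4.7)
The plan is to adapt S.-J. Patterson's original construction to the weighted (potential) setting, replacing the metric $d$ by $d - d^F/\sigma$ wherever appropriate, and to exploit the Gibbs cocycle just as the Busemann cocycle is used in the unweighted case. Fix a basepoint $x_{0}\in X$. For each $s>\sigma_{\Gamma,F}$ form the Poincar\'e--type series
$$
Q_{F}(s):=\sum_{\gamma\in\Gamma}e^{d^{F}(x_{0},\gamma x_{0})-s\,d(x_{0},\gamma x_{0})},
$$
and the associated measure on $\overline{X}$
$$
\nu_{x,s}^{F}:=\frac{1}{Q_{F}(s)}\sum_{\gamma\in\Gamma}e^{d^{F}(x,\gamma x_{0})-s\,d(x,\gamma x_{0})}\,\delta_{\gamma x_{0}}.
$$
The hypothesis $\sigma_{\Gamma,F}<\infty$ says exactly that $\sigma_{\Gamma,F}$ is the exponent of convergence of $Q_{F}$, so these are well defined finite positive measures for $s>\sigma_{\Gamma,F}$, and $\nu_{x_{0},s}^{F}$ has total mass $1$.

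Next I would handle the convergence/divergence dichotomy at the critical exponent. If $Q_{F}(s)\to+\infty$ as $s\searrow\sigma_{\Gamma,F}$, nothing has to be done. Otherwise, apply Patterson's classical slow-growth trick: choose a nondecreasing function $h\colon\mathbb{R}_{+}\to\mathbb{R}_{+}$ subexponential in the sense that for every $\epsilon>0$ there exists $T$ with $h(t+r)\leq e^{\epsilon r}h(t)$ for $t\geq T$, $r\geq 0$, and such that the modified series
$$
Q_{F}^{h}(s):=\sum_{\gamma\in\Gamma}h\bigl(d(x_{0},\gamma x_{0})\bigr)\,e^{d^{F}(x_{0},\gamma x_{0})-s\,d(x_{0},\gamma x_{0})}
$$
has the same critical exponent $\sigma_{\Gamma,F}$ but diverges there. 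The weighted measures $\nu_{x,s}^{F,h}$ are defined analogously. Extract a weak-$*$ limit along a sequence $s_{k}\searrow\sigma_{\Gamma,F}$ via Banach--Alaoglu on the compact space $\overline{X}$; call it $\nu_{x}^{F}$. Because $Q_{F}(s_{k})\to\infty$ (possibly after the $h$-modification), the masses of $\nu_{x,s_{k}}^{F}$ on every compact subset of $X$ tend to zero, hence $\nu_{x}^{F}$ is carried by $\partial X$.

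To verify the Patterson density axioms, note the exact identity on atoms
$$
\frac{d\nu_{y,s}^{F}}{d\nu_{x,s}^{F}}(\gamma x_{0})=\exp\!\Bigl(d^{F}(y,\gamma x_{0})-d^{F}(x,\gamma x_{0})-s\bigl(d(y,\gamma x_{0})-d(x,\gamma x_{0})\bigr)\Bigr).
$$
As $\gamma x_{0}\to v\in\partial X$ and $s\to\sigma_{\Gamma,F}$, the right-hand side converges, uniformly on compact subsets of $\overline{X}\setminus\{v\}$, to $\exp\bigl(C_{v}^{F-\sigma_{\Gamma,F}}(x,y)\bigr)$ by the definition of the Gibbs cocycle in \eqref{gibbs}; together with a standard H\"older continuity estimate on the cocycle this upgrades to pointwise convergence of Radon--Nikodym derivatives. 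Passing to the weak-$*$ limit yields the conformality relation. The $\Gamma$-equivariance $\gamma_{*}\nu_{x}^{F}=\nu_{\gamma x}^{F}$ is a direct bijection of summation indices at the level of $\nu_{x,s}^{F}$ and survives the limit. Finally, each $\nu_{x,s}^{F}$ is supported on $\Gamma x_{0}$, so $\mathrm{supp}(\nu_{x}^{F})\subset\overline{\Gamma x_{0}}\cap\partial X=\Lambda_{\Gamma}$; conversely, $\mathrm{supp}(\nu_{x}^{F})$ is closed and $\Gamma$-invariant in $\Lambda_{\Gamma}$, and the $\Gamma$-action on $\Lambda_{\Gamma}$ is minimal for non-elementary discrete isometry groups in pinched negative curvature, so the support equals all of $\Lambda_{\Gamma}$ because $\nu_{x}^{F}$ is nonzero.

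The main obstacle is Step~2, the slow-growth modification: one must choose $h$ so simultaneously (a) the modified series keeps the same exponent of convergence $\sigma_{\Gamma,F}$, (b) it diverges at that exponent, and (c) the subexponential factor $h(d(x,\gamma x_{0}))/h(d(x_{0},\gamma x_{0}))$ tends to $1$ along sequences $\gamma x_{0}\to v$ so as not to spoil the Radon--Nikodym computation. This is classical for Patterson--Sullivan measures; the only point to check in the present setting is that the additive H\"older cocycle $d^{F}$ behaves well enough under the triangle-type inequality $|d^{F}(x,\gamma x_{0})-d^{F}(x_{0},\gamma x_{0})|\leq\|\widetilde{F}\|_{\infty}\,d(x,x_{0})$ to leave all three properties intact. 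Once this is established the rest of the argument goes through verbatim.
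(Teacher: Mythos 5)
Your proposal is correct and follows exactly the approach the paper implicitly invokes: the paper gives no proof, only says ``the technique is exactly the same'' as Patterson's and cites \cite{Pa}; the detailed weighted-potential version is carried out in \cite[Ch.~5]{Pau}, which is what you have reconstructed, including the slow-growth modification $h$ and the passage to a weak-$*$ limit of orbital measures. One small imprecision worth flagging: the inequality $|d^{F}(x,\gamma x_{0})-d^{F}(x_{0},\gamma x_{0})|\leq\|\widetilde{F}\|_{\infty}\,d(x,x_{0})$ you invoke at the end is not literally true for non-constant $\widetilde{F}$, since the geodesics $[x,\gamma x_{0}]$ and $[x_{0},\gamma x_{0}]$ only converge exponentially rather than coincide; the correct statement is the H\"older-dependent bound $|d^{F}(x,z)-d^{F}(x_{0},z)|\leq E(d(x,x_{0}))$ of Lemma \ref{fondamental}~(3) (i.e.\ \cite[Lemma 3.2]{Pau}), which suffices for your argument because it is uniform in $z$ for fixed $x,x_{0}$.
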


 A Patterson density $\nu^{F}$ of dimension $\sigma$ gives rise to a unitary representations $(\pi_{\nu^{F}_x})_{x\in X}$ defined for $x\in X$ as:
 
	\[
		\pi_{\nu^{F}_x}:\Gamma\to \mathcal{U}\big(L^2(\partial X,\nu^{F}_x) \big)
	\]
	\begin{equation}\label{boundaryrep}
	   (\pi_{\nu^{F}_x}(\gamma)\xi)(v)=  {\rm e}^{{\frac{1}{2}} {C^{F-\sigma}_{v}(x,\gamma x)}}  \xi(\gamma^{-1}v),
	   	\end{equation}
where $\xi \in L^{2}(\partial X,\nu^{F}_{x})$ and $v\in \partial X$.

The representations $(\pi_{\nu^{F}_{x}})_{x\in X}$ are unitarily equivalent.
Let $x$ be in $X$ and denote $\pi_{\nu^{F}_x}$ by $\pi_{x}$.
 The matrix coefficient 
\begin{equation}\label{HCHfunction}
\phi_{x}: \Gamma \rightarrow \langle \pi_{x}(\gamma)\mathbf{1}_{\partial X}, \mathbf{1}_{\partial X}\rangle \in \mathbb{R}^{+},
\end{equation}  is called the \emph{Harish-Chandra function}, where $\mathbf{1}_{\partial X}$ denotes the characteristic function of $\partial X$.
\\

\subsection*{Construction of ergodic operator-valued measures}
The Banach space of bounded linear operators from the Banach space of continuous functions to the Banach space of bounded operators on a Hilbert space will be denoted by $\mathcal{L}\big(C(Z),\mathcal{B}(\mathcal{H})\big)$. 
The Banach space $\mathcal{L}\big(C(Z),\mathcal{B}(\mathcal{H})\big)$ is naturally isomorphic to the dual of the Banach space $C(Z)\widehat{\otimes} \mathcal{H} \widehat{\otimes} \overline{\mathcal{H}}$ where $\widehat{\otimes}$ denotes the projective tensor product: Thus $\mathcal{L}\big(C(Z),\mathcal{B}(\mathcal{H})\big)$ will be called the space of \textit{operator-valued measures}.\\
 Pick $x$ in $X$, and a positive real number $\rho $ and define for all integers $ n\geq1 $ the annulus $$C_{n}(x)=\lbrace \gamma \in \Gamma | n-1\leq d(\gamma x, x) < n \rbrace .$$ Let $D_{y}$ be the unit Dirac mass centered at a point $y\in X$. Consider the sequence of operator-valued measures defined for all integers $n \geq 1$  as:
\begin{equation}\label{suitesmesures} 
 \mathcal{M}^{n}_{x} :f\in C(\overline{X}) \mapsto  c_{\Gamma,F} {\rm e}^{-\sigma_{\Gamma,F} n}\sum_{\gamma \in C_{n}(x)} {\rm e}^{d^{F}(x,\gamma x)}D_{\gamma x} (f) \frac{\pi_{x}(\gamma)}{\phi_{x}(\gamma)} \in \mathcal{B}\big(L^{2}(\partial X,\nu^{F}_{x}) \big),
  \end{equation}
  with the normalization constant
  \begin{equation}\label{constante}
   c_{\Gamma,F}=\frac{\sigma_{\Gamma,F} \|m_{F}\|}{1-{\rm e}^{-\sigma_{\Gamma,F}}},
   \end{equation}
where $\|m_{F}\|$ is the mass of the so-called \emph{Gibbs} measure associated with $\nu^{F}$. We refer to Section \ref{Gibbs} for definitions and properties of Gibbs measures. The normalization constant $c_{\Gamma,F}$ ensures  that for any $x $ in $X$ $$c_{\Gamma,F} {\rm e}^{-\sigma_{\Gamma,F} n}\sum_{\gamma \in C_{n}(x)} {\rm e}^{d^{F}(x,\gamma x)}D_{\gamma x} \otimes D_{\gamma^{-1}y}\rightharpoonup \nu^{F}_{x}\otimes \nu^{F}_{y},$$ as n goes to $+\infty$ with respect to the weak$^*$ convergence on $C(\overline{X})^{*}$. \\

 If $f\in C(\overline{X})$, we denote by $f_{|_{\partial X}}$ its continuous restriction to the space $\partial X$.
 
 Let $m(f)$ be the operator in $ \mathcal{B} \big(L^{2}(\partial X,\nu^{F}_{x}) \big)$ acting on $L^{2}(\partial X,\nu^{F}_{x})$ by multiplication and define the operator-valued measure $\mathcal{M}_{x}$  as: 
 \begin{equation}\label{mesurelimite}
\mathcal{M}_{x}: f\in C(\overline{X}) 
 \mapsto m(f_{|_{\partial X}})P_{\textbf{1}_{\partial X}}  \in  \mathcal{B} \big(L^{2}(\partial X,\nu^{F}_{x}) \big).
\end{equation}
where $P_{\textbf{1}_{\partial X}}$ denotes the orthogonal projection on the space of constant functions.
\subsection*{Main Results}\label{results}

 The main result of this paper is the following theorem:

\begin{theoremA}\label{maintheorem} (Equidistribution \`a la Bader-Muchnik) \\Let $\Gamma$ be a convex cocompact discrete group of isometries of a complete connected Riemannian manifold with pinched negative curvature $X$. Let $\widetilde{F}: T^{1}X \rightarrow \mathbb{R}$ be a H\"older-continuous $\Gamma$-invariant potential and let $\nu^{F}$ be a Patterson density for $(\Gamma,F)$ of dimension $\sigma_{\Gamma,F}$.\\
Assume that $\widetilde{F}$ is \emph{symmetric} and  assume that the Gibbs measure associated with $\nu^F$ is \emph{mixing} with respect to the geodesic flow. Then for each $x$ in $ CH(\Lambda_{\Gamma})$ we have  $$\mathcal{M}^{n}_{x} \rightharpoonup \mathcal{M}_{x}$$ as $n \rightarrow + \infty$ with respect to the weak* topology of the Banach space $\mathcal{L}\big(C(\overline{X}),\mathcal{B}(L^{2}(\partial X,\nu^{F}_{x}) )\big)$.
In other words we have for all $f\in C(\overline{X})$ and all $\xi$, $\eta \in L^{2}(\partial X,\nu^{F}_{x})$:
$$\lim_{n\rightarrow +\infty}\langle \mathcal{M}^{n}_{x}(f)\xi,\eta \rangle= \bigg(\int_{\partial X} \xi d\nu_{x}^{F}\bigg) \bigg(\int_{\partial X} f_{|_{\partial X}}\overline{\eta}d\nu_{x}^{F}\bigg).$$
\end{theoremA}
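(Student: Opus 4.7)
The plan is to follow the Bader--Muchnik scheme, anchored on the three ingredients highlighted in the introduction: the Paulin--Pollicott--Schapira equidistribution theorem, property RD of the hyperbolic group $\Gamma$, and the spectral characterization of amenability of the $\Gamma$-action on $(\partial X,\nu^F_x)$. Expanding the pairing one has
$$\langle \mathcal{M}^n_x(f)\xi,\eta\rangle=c_{\Gamma,F}\,{\rm e}^{-\sigma_{\Gamma,F}n}\sum_{\gamma\in C_n(x)}{\rm e}^{d^F(x,\gamma x)}\,f(\gamma x)\,\frac{\langle \pi_x(\gamma)\xi,\eta\rangle}{\phi_x(\gamma)}.$$
The first step is to rewrite the matrix coefficient $\langle \pi_x(\gamma)\xi,\eta\rangle$ as an integral on $\partial X\times\partial X$. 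Performing the change of variables $v\mapsto \gamma v$ in the defining integral and using the cocycle identity together with the symmetry of $\widetilde{F}$, this matrix coefficient becomes the integral against a symmetric kernel which, when restricted to shadows of balls viewed from $x$, factorizes up to bounded multiplicative errors into a function of the first variable evaluated at $\gamma^{-1}x$ times a function of the second variable evaluated at $\gamma x$, multiplied by a radial factor comparable to $\phi_x(\gamma)$. This algebraic preparation is precisely what allows the Paulin--Pollicott--Schapira equidistribution on $\overline X\times\overline X$, together with the mixing hypothesis on the Gibbs measure, to control the double sum.

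Next I would establish convergence on a dense subspace of $L^2(\partial X,\nu_x^F)$. Testing first with $\xi=\eta=\mathbf{1}_{\partial X}$ cancels $\phi_x(\gamma)$ and reduces the claim to the scalar weak* convergence stated just before Theorem A. Replacing constants by indicators $\mathbf{1}_{\mathcal{O}_A},\mathbf{1}_{\mathcal{O}_B}$ of shadows of balls, the Gibbs shadow lemma applied to the kernel produced in step one exhibits the sum as a double Paulin--Pollicott--Schapira sum which, by mixing of the geodesic flow, converges to
$$\Bigl(\int \mathbf{1}_{\mathcal{O}_A}\,d\nu_x^F\Bigr)\Bigl(\int f\,\overline{\mathbf{1}_{\mathcal{O}_B}}\,d\nu_x^F\Bigr).$$
Since linear combinations of such indicators (together with continuous approximants of $f$) are dense, this delivers the desired limit on a dense set.

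The extension to arbitrary vectors requires $\sup_n\|\mathcal{M}^n_x(f)\|<\infty$. Here property RD is essential: up to a bounded multiplicative perturbation by $f$, the operator $\mathcal{M}^n_x(f)$ is a radial convolution-type operator on $L^2(\partial X,\nu_x^F)$ weighted by $c_{\Gamma,F}\,{\rm e}^{-\sigma_{\Gamma,F}n+d^F(x,\gamma x)}/\phi_x(\gamma)$ and supported on the annulus $C_n(x)$. Property RD bounds the $\ell^2(\Gamma)$-norm of this radial weight, and the Kaimanovich and Anantharaman-Delaroche--Renault spectral characterization of amenability of the boundary action transfers this bound to an operator norm bound in $\mathcal{B}(L^2(\partial X,\nu_x^F))$. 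The main obstacle is to obtain a sharp asymptotic for the Harish-Chandra function along the annuli, of the form
$$\phi_x(\gamma)\asymp d(x,\gamma x)\,\exp\Bigl(\tfrac{1}{2}d^F(x,\gamma x)-\tfrac{1}{2}\sigma_{\Gamma,F}\,d(x,\gamma x)\Bigr),$$
valid uniformly for $x\in CH(\Lambda_\Gamma)$. This estimate, obtained from the Gibbs shadow lemma combined with the convex cocompact hypothesis (ensuring uniform constants along the orbit $\Gamma x$), is the analytic heart of the argument: it is what makes the RD bound finite and what aligns the weights of the sum with the Paulin--Pollicott--Schapira distribution. Once it is in hand, the three steps above combine to yield the theorem.
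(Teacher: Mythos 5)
Your roadmap agrees with the paper's proof: the same two-step architecture (uniform boundedness plus identification of the limit on a dense family of vectors), the same three analytic ingredients (the Paulin--Pollicott--Schapira equidistribution, property RD combined with the spectral characterization of amenability of the boundary action, and the Harish-Chandra estimate $\phi_x(\gamma)\asymp d(x,\gamma x)\,{\rm e}^{\frac{1}{2}d^F(x,\gamma x)-\frac{1}{2}\sigma_{\Gamma,F}d(x,\gamma x)}$ with the polynomial factor correctly matching the $n^2$ in the RD bound). Step 1 (uniform boundedness) as you describe it is exactly what the paper does.

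However, there is a genuine gap in your Step 2. You claim that rewriting $\langle\pi_x(\gamma)\xi,\eta\rangle$ against a kernel that ``factorizes up to bounded multiplicative errors'' into a product of a function of $\gamma^{-1}x$ and a function of $\gamma x$, and then applying the equidistribution theorem, yields exact convergence of the pairing to the product limit. This does not follow: a factorization with only two-sided bounded multiplicative constants gives at best a two-sided comparison of $\langle\mathcal{M}_x^n(f)\xi,\eta\rangle$ with the PPS double sum, hence only $\limsup$ and $\liminf$ bounds differing by a fixed multiplicative constant, not the stated equality. What the paper actually proves is a one-sided $\limsup$ inequality (replacing the normalized matrix coefficient $\langle\pi_x(\gamma)\mathbf{1},\chi_U\rangle/\phi_x(\gamma)$ by the Dirac mass $D_{\gamma x}(\chi_{\mathcal{C}_x(U(a))})$ on a thickening $U(a)$, after first proving that the contribution of directions far from $w_x^{\gamma x}$ is $O(1/d(x,\gamma x))\,\phi_x(\gamma)$ --- Lemma \ref{fonda} and Proposition \ref{deform}), lets the thickening parameter $a\to+\infty$ through a countable set of good values so the error sets shrink to the topological boundary, and then upgrades the resulting $\limsup$ inequality to a genuine limit by the complementary-partition sandwich argument in Proposition \ref{applicationroblin} (summing over $U^i,V^j,W^k$ and using that the total mass already converges to $\|\nu_x^F\|^2$). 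Without an argument of that type --- or a genuinely asymptotic, not merely bounded-constant, factorization of the kernel --- your Step 2 does not deliver the claimed limit.
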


With the same hypotheses of the above theorem, we deduce immediately an ergodic theorem \`a la Birkhoff-von Neumann for the Patterson density $(\Gamma,F)$ associated with  $\nu^{F}_{x}$ on $\partial X$.

\begin{coroB}\label{maincoro}(Ergodicity \`a la Birkhoff-von Neumann)\\
 For all $x\in CH(\Lambda_{\Gamma})$ $$c_{\Gamma,F} { \rm e}^{-\sigma_{\Gamma,F} n}\sum_{\gamma \in C_{n}(x)}{\rm e}^{d^{F}(x,\gamma x)} \frac{\pi_{x}(\gamma)}{\phi_{x}(\gamma)} \rightarrow  P_{\textbf{1}_{\partial X}}$$ as $n\rightarrow +\infty$ with respect to the weak operator topology on $\mathcal{B}(L^{2}(\partial X,\nu^{F}_{x}))$. 
\end{coroB}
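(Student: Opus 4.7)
The plan is to deduce Corollary B from Theorem A by specializing the weak* convergence $\mathcal{M}^n_x \rightharpoonup \mathcal{M}_x$ to the constant test function $f \equiv \mathbf{1}_{\overline{X}} \in C(\overline{X})$. The assertion of the corollary is essentially a single slice of the equidistribution theorem, so there is no real obstacle beyond unpacking the definitions.

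The first step would be to compute both sides of the convergence at $f = \mathbf{1}_{\overline{X}}$. Since $D_{\gamma x}(\mathbf{1}_{\overline{X}}) = 1$ for every $\gamma$, the defining expression (\ref{suitesmesures}) collapses to
$$\mathcal{M}^n_x(\mathbf{1}_{\overline{X}}) = c_{\Gamma,F}\, {\rm e}^{-\sigma_{\Gamma,F} n} \sum_{\gamma \in C_n(x)} {\rm e}^{d^F(x,\gamma x)} \frac{\pi_x(\gamma)}{\phi_x(\gamma)},$$
which is exactly the operator on the left-hand side of Corollary B. Since $\mathbf{1}_{\overline{X}}|_{\partial X}$ is the constant function $1$, the multiplication operator $m(\mathbf{1}_{\overline{X}}|_{\partial X})$ is the identity of $L^2(\partial X, \nu^F_x)$, and formula (\ref{mesurelimite}) then gives $\mathcal{M}_x(\mathbf{1}_{\overline{X}}) = P_{\mathbf{1}_{\partial X}}$.

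The second step would be to reconcile the modes of convergence. Setting $\mathcal{H} = L^2(\partial X, \nu^F_x)$, the weak* topology on $\mathcal{L}(C(\overline{X}), \mathcal{B}(\mathcal{H}))$ is induced by its predual $C(\overline{X}) \widehat{\otimes} \mathcal{H} \widehat{\otimes} \overline{\mathcal{H}}$ under the bilinear pairing $\langle T, f \otimes \xi \otimes \overline{\eta}\rangle = \langle T(f)\xi, \eta\rangle$. Testing the convergence provided by Theorem A against the elementary tensors $\mathbf{1}_{\overline{X}} \otimes \xi \otimes \overline{\eta}$ therefore yields
$$\bigl\langle \mathcal{M}^n_x(\mathbf{1}_{\overline{X}})\xi, \eta\bigr\rangle \longrightarrow \bigl\langle P_{\mathbf{1}_{\partial X}}\xi, \eta\bigr\rangle$$
for all $\xi, \eta \in \mathcal{H}$, which is precisely weak operator convergence of the operators. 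This finishes the proof; the only bookkeeping is identifying $\mathcal{M}_x(\mathbf{1}_{\overline{X}})$ with $P_{\mathbf{1}_{\partial X}}$ and recognizing that evaluating the weak* convergence at a fixed continuous function reduces it to weak operator convergence on $\mathcal{B}(\mathcal{H})$.
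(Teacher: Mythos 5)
Your proposal is correct and is essentially identical to the paper's proof: the paper also deduces Corollary~B by evaluating the weak* convergence of Theorem~A at the triple $(\mathbf{1}_{\overline{X}}, \xi, \eta)$ and observing that $\mathcal{M}_x(\mathbf{1}_{\overline{X}}) = P_{\mathbf{1}_{\partial X}}$.
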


In the same setting of Theorem A we have:
\begin{coroC}\label{maincoro2}(Irreducibility)
Assume that $\widetilde{F}$ is \emph{cohomologous to a symmetric potential} and assume that the Gibbs measure is \emph{mixing}. For all $x\in X$, the representations $\pi_{x}:\Gamma \rightarrow \mathcal{U}(L^{2}(\partial X, \nu^{F}_{x}))$ are irreducible.
\end{coroC}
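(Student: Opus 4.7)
My plan is to deduce Corollary C from Theorem A by the classical argument that triviality of the commutant implies irreducibility, after first reducing to the genuinely symmetric case.

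For the reduction, suppose $\widetilde{F}$ is cohomologous (in the sense of the geodesic flow) to a symmetric H\"older $\Gamma$-invariant potential $\widetilde{F}'$. Standard thermodynamic formalism then yields $\sigma_{\Gamma,F}=\sigma_{\Gamma,F'}$, equality of the Gibbs measures (so the mixing hypothesis transfers), and that $\nu^F_x$ and $\nu^{F'}_x$ differ by a positive continuous Radon--Nikodym derivative $h_x$ on $\partial X$. Comparing the transformation rule $\gamma_{*}\nu^F_x=\nu^F_{\gamma x}$ for both potentials forces $\log h_x$ to satisfy exactly the cocycle identity needed for multiplication by $h_x^{1/2}$ to define a unitary intertwiner between $\pi_x$ and the boundary representation attached to $\widetilde{F}'$. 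Irreducibility being a unitary invariant, I may henceforth assume $\widetilde{F}$ is itself symmetric, so Theorem A is at my disposal.

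Now let $T\in\mathcal{B}(L^2(\partial X,\nu^F_x))$ commute with every $\pi_x(\gamma)$. Since $\mathcal{M}^n_x(f)$ is a scalar linear combination of the operators $\pi_x(\gamma)$, $T$ commutes with each $\mathcal{M}^n_x(f)$. Testing matrix coefficients against arbitrary $\xi,\eta\in L^2(\partial X,\nu^F_x)$ and sending $n\to\infty$ via Theorem A yields
\[
\langle \mathcal{M}_x(f)\,T\xi,\eta\rangle=\langle \mathcal{M}_x(f)\,\xi,T^*\eta\rangle,\qquad f\in C(\overline{X}).
\]
Substituting $\mathcal{M}_x(f)=m(f_{|\partial X})P_{\mathbf{1}_{\partial X}}$ turns this into
\[
\langle T\xi,\mathbf{1}_{\partial X}\rangle\int_{\partial X}f_{|\partial X}\,\overline{\eta}\,d\nu^F_x=\langle\xi,\mathbf{1}_{\partial X}\rangle\int_{\partial X}f_{|\partial X}\,\overline{T^*\eta}\,d\nu^F_x.
\]
Specializing $\xi=\mathbf{1}_{\partial X}$ and putting $\lambda:=\langle T\mathbf{1}_{\partial X},\mathbf{1}_{\partial X}\rangle/\|\mathbf{1}_{\partial X}\|^2$, I obtain $\int_{\partial X} f_{|\partial X}\bigl(\overline{T^*\eta}-\overline{\lambda}\,\overline{\eta}\bigr)\,d\nu^F_x=0$ for every $f\in C(\overline{X})$ and every $\eta$. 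Since restriction $C(\overline{X})\to C(\partial X)$ is surjective by Tietze ($\overline{X}$ being compact metrizable) and $C(\partial X)$ is $L^2$-dense, this forces $T^*\eta=\overline{\lambda}\,\eta$ for every $\eta$; hence $T=\lambda I$, the commutant of $\pi_x(\Gamma)$ is trivial, and $\pi_x$ is irreducible by Schur's lemma.

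The hard part, beyond Theorem A itself which is the paper's main technical work, is really the reduction step: one must unpack the effect of a flow coboundary on the Gibbs cocycle $C^F_v(x,y)$ and verify both the continuity on $\partial X$ of the resulting Radon--Nikodym derivative and the correct $\Gamma$-equivariance identity for $h_x$. Once that is granted, the commutant-to-scalar mechanism is essentially formal, needing only Theorem A and the standard density of $C(\overline{X})_{|\partial X}$ in $L^2(\partial X,\nu^F_x)$.
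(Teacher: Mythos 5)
Your argument is correct and follows the same overall route as the paper: reduce to the case where $\widetilde{F}$ is itself symmetric via the intertwining unitary coming from the cocycle cohomology (this is precisely Lemma~\ref{cohomo} in the paper, which you essentially re-derive), then apply Theorem~A to control the commutant. Where you depart from the paper is in the final step: the paper extracts two facts from Theorem~A and Corollary~B — that $\mathbf{1}_{\partial X}$ is cyclic and that $P_{\mathbf{1}_{\partial X}}$ lies in the von Neumann algebra generated by $\pi_x(\Gamma)$ — and then invokes an abstract lemma of Garncarek (cyclic vector plus rank-one projection in the von Neumann algebra implies irreducibility), whereas you inline that lemma into a direct computation: commute $T$ past $\mathcal{M}_x^n(f)$, pass to the limit, plug in the explicit form $\mathcal{M}_x(f)=m(f_{|\partial X})P_{\mathbf{1}_{\partial X}}$, and conclude $T=\lambda I$ by density. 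Your version is more self-contained and slightly more elementary; the paper's version factors through a reusable abstract statement. Both hinge on exactly the same input from Theorem~A.

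One small omission: Theorem~A is stated only for base points $x\in CH(\Lambda_\Gamma)$, while Corollary~C asserts irreducibility for all $x\in X$. You should explicitly invoke the fact (stated in the paper after the definition of $\pi_{\nu^F_x}$) that the representations $(\pi_{\nu^F_x})_{x\in X}$ for varying $x$ are unitarily equivalent, so that it suffices to prove irreducibility for a single $x\in CH(\Lambda_\Gamma)$. As written, your reduction handles only the cohomology of the potential, not the base point, so the appeal to Theorem~A is not literally justified for arbitrary $x$.
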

\begin{remark}
The assumption of mixing of Gibbs measures with respect to the geodesic flow is automatic in the case of constant curvature, hence all boundary representations of convex cocompact groups associated with a Patterson density with a  H\"older-continuous potential $\widetilde{F}$  cohomologous to a symmetric potential is irreducible.  Note that this property does not depend on the base point.
\end{remark}

We obtain also the following theorem which classifies the unitary representations associated with a Patterson density. We refer to Subsection \ref{cohomo'} for the definitions concerning Item $(3)$ and $(4)$.
\begin{theoremD}Let $\Gamma$ be a convex cocompact discrete group of isometries of $X$, pick a point $x$ in $X$ and let $\nu_{x}^{F}$ and  $\nu_{x}^{G}$ be Patterson densities associated with H\"older-continuous  $\Gamma$-invariant \emph{symmetric} potentials $\widetilde{F}$ and $\widetilde{G}$ on $T^{1}X$.  Assume that the Gibbs measure is \emph{mixing} with respect to the geodesic flow.
Then the following assertions are equivalent: 
\begin{enumerate}
\item The unitary $\pi_{\nu_{x}^{F}}$ and $\pi_{\nu_{x}^{G}}$ are equivalent as unitary representations.
\item The measures $\nu_{x}^{F}$ and $\nu_{x}^{G}$ are in the same class.
\item The potentials $\widetilde{F}$ and $\widetilde{G}$ have the same periods.
\item The Gibbs cocycles associated with $F$ and $G$ are cohomologous in restriction to $\Omega \Gamma$.

\end{enumerate}
\end{theoremD}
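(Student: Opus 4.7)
The strategy is to prove the cycle $(2) \Rightarrow (1) \Rightarrow (2)$ together with the chain $(2) \Leftrightarrow (4) \Leftrightarrow (3)$; the latter chain is thermodynamic in nature, while the heart of the argument is the representation-theoretic direction $(1) \Rightarrow (2)$, which combines Theorem~A with the irreducibility supplied by Corollary~C.

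The equivalences $(2) \Leftrightarrow (4)$ and $(3) \Leftrightarrow (4)$ do not use the new tools of the paper. For $(2) \Leftrightarrow (4)$: if $h = d\nu_x^G/d\nu_x^F$ exists, the quasi-invariance of both Patterson densities yields
\[
C^{F-\sigma_{\Gamma,F}}_v(x,\gamma x) - C^{G-\sigma_{\Gamma,G}}_v(x,\gamma x) = \log h(\gamma^{-1}v) - \log h(v),
\]
so the two Gibbs cocycles differ by the coboundary of $\log h$; since both measures have full support on $\Lambda_\Gamma$ the relation extends to $\Omega\Gamma$, and conversely a coboundary trivializing the difference integrates to an equivalent Patterson density. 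For $(3) \Leftrightarrow (4)$, a Livsic-type theorem applied to the transitive geodesic flow on $\Omega\Gamma/\Gamma$ says two H\"older potentials are cohomologous iff their closed-orbit periods coincide, and a standard computation (integrating potentials along geodesics, then passing to the boundary) translates this into cohomology of the corresponding Gibbs cocycles in restriction to $\Omega\Gamma$.

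For $(2) \Rightarrow (1)$, set $h = d\nu_x^G/d\nu_x^F > 0$. Then $U\xi := \sqrt{h}\,\xi$ is a unitary $L^2(\partial X, \nu_x^F) \to L^2(\partial X, \nu_x^G)$, and a direct calculation using $(2) \Leftrightarrow (4)$ to match the Radon--Nikodym cocycles appearing in \eqref{boundaryrep} shows that $U$ intertwines $\pi_{\nu_x^F}$ and $\pi_{\nu_x^G}$.

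The main direction is $(1) \Rightarrow (2)$. Given a unitary intertwiner $U$, the plan is to push the identity $\langle \pi_{\nu_x^F}(\gamma)\xi,\eta\rangle_F = \langle \pi_{\nu_x^G}(\gamma) U\xi, U\eta\rangle_G$ through Theorem~A applied on both sides. A preliminary step uses the identity $\phi_x^F(\gamma) = \langle \pi_{\nu_x^G}(\gamma) U\mathbf{1}_{\partial X}, U\mathbf{1}_{\partial X}\rangle_G$, together with Corollary~C (which by Schur forces the intertwiner to be unique up to scalar) and the equidistribution of Theorem~A for both $F$ and $G$, to match the asymptotic weight factors: $\sigma_{\Gamma,F} = \sigma_{\Gamma,G}$ and the ratios $(\mathrm{e}^{d^F(x,\gamma x)}/\phi_x^F(\gamma)) / (\mathrm{e}^{d^G(x,\gamma x)}/\phi_x^G(\gamma))$ are controlled as $\gamma$ ranges over $C_n(x)$ for large $n$. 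Once the matching is secured, $U\mathcal{M}^n_{x,F}(f)U^*$ and $\mathcal{M}^n_{x,G}(f)$ share the same weak-$*$ limit, which yields the operator identity $U\,m(f_{|\partial X})\,P^F_{\mathbf{1}_{\partial X}}\,U^* = m(f_{|\partial X})\,P^G_{\mathbf{1}_{\partial X}}$ on $L^2(\partial X, \nu_x^G)$ for every $f \in C(\overline{X})$. Testing this identity on vectors and letting $f$ vary over a dense subalgebra of $C(\partial X)$ forces $U\mathbf{1}_{\partial X}$ to be proportional to $\mathbf{1}_{\partial X}$ in $L^2(\nu_x^G)$ and then, by the unitarity of $U$ evaluated on continuous functions, $\nu_x^F$ and $\nu_x^G$ to be proportional on Borel sets, hence in the same class. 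The principal obstacle is precisely this preliminary matching of the weight factors from the mere existence of a unitary intertwiner; this is where I expect the bulk of the technical work to lie and where the Harish--Chandra asymptotics, property RD, and Theorem~A will be used jointly.
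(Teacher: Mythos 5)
Your handling of the equivalences $(2)\Leftrightarrow(3)\Leftrightarrow(4)$ and of $(2)\Rightarrow(1)$ is sound and matches the paper, which delegates these to Proposition~\ref{equivcohomo} and Lemma~\ref{cohomo} respectively (the intertwiner is multiplication by $h^{-1/2}$ where $h=d\nu_x^G/d\nu_x^F$; watch the sign of the coboundary, but the idea is right). The problem is the direction $(1)\Rightarrow(2)$.

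Your plan for $(1)\Rightarrow(2)$ has a genuine gap, and you flag it yourself. To show that $U\mathcal{M}^n_{x,F}(f)U^*$ and $\mathcal{M}^n_{x,G}(f)$ have the same weak$^*$ limit you must show that the scalar weights
\[
c_{\Gamma,F}\,{\rm e}^{-\sigma_{\Gamma,F}n}\,{\rm e}^{d^F(x,\gamma x)}/\phi_x^F(\gamma)
\quad\text{and}\quad
c_{\Gamma,G}\,{\rm e}^{-\sigma_{\Gamma,G}n}\,{\rm e}^{d^G(x,\gamma x)}/\phi_x^G(\gamma)
\]
agree asymptotically on $C_n(x)$, and also that $U\mathbf 1_{\partial X}$ is proportional to $\mathbf 1_{\partial X}$. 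But the only handle a unitary intertwiner $U$ gives you on the Harish--Chandra function is $\phi_x^F(\gamma)=\langle\pi_G(\gamma)U\mathbf 1,U\mathbf 1\rangle_G$, which is \emph{not} $\phi_x^G(\gamma)$ unless you already know $U\mathbf 1\propto\mathbf 1$; and the equality of the two Patterson weights ${\rm e}^{d^F}/\phi^F$ and ${\rm e}^{d^G}/\phi^G$ is, by the shadow lemma, essentially the statement that the measures are in the same class --- precisely what you are trying to prove. So the ``matching of weight factors'' step is circular, not merely technical, and the argument does not close.

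The paper proves $(1)\Rightarrow(2)$ by a different, purely operator-algebraic route that avoids Theorem~A entirely: given $U$, the map $\Phi\colon T\mapsto U^*TU$ is a spatial isomorphism $W^*_{\pi_G}(\Gamma)\to W^*_{\pi_F}(\Gamma)$, and Corollary~C (irreducibility) forces both algebras to equal all of $\mathcal{B}(L^2)$. One then characterizes the multiplication projections $\chi_B\in L^\infty(\partial X,\nu)$ as exactly those projections $p$ for which both $p$ and $1-p$ preserve the positive cone; this description is purely in terms of the order structure, so $\Phi$ restricts to an isomorphism $L^\infty(\partial X,\nu^G)\to L^\infty(\partial X,\nu^F)$, which is implemented by a measure-class-preserving Borel isomorphism, whence $\nu^F_x$ and $\nu^G_x$ lie in the same class. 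In short: where your approach tries to extract measure equivalence from the asymptotics of the operators $\mathcal M^n_x$, the paper extracts it from the von Neumann algebra they generate, using irreducibility to trivialize the commutant and positivity to recover the diagonal subalgebra. I would rewrite $(1)\Rightarrow(2)$ along these lines rather than try to close the circularity in the weight-matching.
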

The method of proof of Theorem A consists of two steps: given a sequence of functionals of the dual of a separable Banach space, we shall prove:
 \begin{itemize}
 \item \textbf{Step 1:} The sequence is uniformly bounded: existence of accumulation points (by the Banach-Alaoglu theorem).\\
 \item \textbf{Step 2:} Identification of the limit using equidistribution theorems (only one accumulation point).
 \end{itemize}

  \subsection*{Structure of the paper} 
 
 In Section \ref{section1} we remind the reader of some standard facts about the geometry in negative curvature, Gibbs cocycles and about Gibbs measures generalizing the Bowen-Margulis-Sullivan measures on the unit tangent bundle to provide the generalization of Roblin's equidistribution theorem by Paulin, Pollicott and Schapira.  \\In Section \ref{section3} we prove fundamental estimates on the Harish-Chandra function. \\In Section \ref{section4} we prove uniform boundedness for the sequences of operators using property RD of de la Harpe et Jolissaint and the amenability of the action on the boundary, thus concluding \textbf{Step 1} of the proof of Theorem A.\\ In Section \ref{section5} we use Paulin-Pollicott-Schapira's equidistribution theorem to achieve \textbf{Step 2} of the proof of Theorem A. \\ In Section \ref{conclusion} we prove our Theorem A and its corollaries as well as Theorem D. 
\\
 
 \subsection*{Acknowledgements}
 We would like to thank Fran\c{c}ois Ledrappier and Fr\'ed\'eric Paulin for suggesting us to study Gibbs measures. We would also like to thank Uri Bader for useful discussions and criticisms. We would like to thank Vadim Kaimanovich for suggesting that we assume that $\widetilde{F}$ is symmetric. We would like also to thank Micha\l \ Zydor and Adrien Borne for their comments on this work. Finally we thank S\'ebastien Gou\"ezel for providing helpful comments about the result of Chris Connell and Roman Muchnik in \cite{CM}.
 
   \section{Preliminaries}\label{section1}
 
 \subsection{Geometry of negative curvature and potential functions}\label{CATspaces}

Recall that $X$ is a complete simply connected Riemannian manifold with dimension at least $2$ and pinched sectional curvature $-b^{2}\leq K\leq -1$ with $b\geq 1$, equipped with its  Riemannian distance denoted by $d$. The geometric boundary or the boundary at infinity, also called Gromov boundary is denoted by $\partial X$. We consider $\Gamma$ a non-elementary discrete group of isometries of $X$.\\ 
\subsubsection{Busemann functions, Bourdon's metric}
Let $x$ be in $X$, let $r$ be a geodesic ray and define the Busemann function associated with the geodesic ray $r$ as $$b_{r}(x)=\lim_{t\rightarrow \infty} d(x,r(t))-t.$$ 

Let $x$ and $y$ be in $X$ and consider the unique semi-infinite geodesic  $[xy)$ passing through $x$ and $y$, starting at $x$. Define $w^{y}_{x}$ as the unique point at the boundary so that 
\begin{equation}\label{pointbord}
w_{x}^{y}:=[xy)\cap\partial X.
 \end{equation}
The limit $\lim_{t\to \infty} d(x,r(t))-d(y,r(t))$ exists, is equal to $b_{r}(x)-b_{r}(y)$, and is independent of the choice of $r$. The horospherical distance from $x$ to $y$ relative to $v$ is defined as 
\begin{equation}\label{horospherical}
\beta_{v}(x,y)=\lim_{t\rightarrow \infty} d(x,r(t))-d(y,r(t)).
\end{equation}
Recall that the Gromov product of two points $a,b\in X$ relative to $x\in X$
is 
\[
	(a,b)_x=\frac{1}{2}(d(x,a)+d(x,b)-d(a,b)).
\]
Let $v,w$ be in $\partial X$ such that $v\neq w$. If $a_n\to v\in\partial X$, $b_n\to w\in\partial X$, then
\[
	(v,w)_x=\lim_{n\to\infty}(a_n,b_n)_x
\]
exists and does not depend on the sequences $a_n$ and $b_n$.\\ 
If $r$ is a geodesic ray representing $v$ we have: \[
	(v,y)_x=\lim_{t\rightarrow +\infty}\frac{1}{2}(d(x,r(t))+d(x,y)-d(r(t),y)),
\]
then we obtain:
\begin{equation}\label{buseman}
\beta_{v}(x,y)=2(v,y)_{x}-d(x,y).
\end{equation}

 Thus, if $z\in X$ is a point on the geodesic connecting
$v$ and $w$, then
$$
   (v,w)_x=\frac{1}{2}(\beta_v(x,z)+\beta_w(x,z)). 
$$

The geometric boundary is endowed with the Bourdon metric which defines the same topology on the boundary as the cone topology (see \cite[Chapitre III.H, Proposition 3.7 and Proposition 3.21]{BH}. Indeed the formula 
\begin{equation}\label{distance}
	d_x(v,w)={\rm e}^{-(v,w)_x}
\end{equation}
defines a metric on $\partial  X$ when we set $d_x(v,v)=0$. This is due to Bourdon and we refer to \cite[Th\'eor\`eme 2.5.1]{Bou} for more details. 
We have the following comparison formula:
\begin{equation}
	d_y(v,w)={\rm e}^{{\frac{1}{2}\left(\beta_v(x,y)+\beta_w(x,y)\right)}}d_x(v,w).
\end{equation}
 
  If $x$ and $y$ are points of $X$ and $R$ is a positive real number, we define the shadow $\mathcal{O}_{R}(x, y)$ to be the set of $v$ in $\partial X$ such that the geodesic ray issued from $x$ with limit point $v$ hits the closed ball of center $y$ with radius $R>0$.

The Sullivan shadow lemma is a very useful tool in ergodic theory of discrete groups, and it has been generalized to the context of Gibbs measure by Moshen in \cite{Mo}, see also \cite[Proposition 11.1]{Pau}.
\begin{lemma}\label{moshen}
	Let $\Gamma$ be a discrete group of isometries of $X$ and $\nu^{F}$ be a Patterson density of dimension $\sigma$ for $(\Gamma,F)$. For all $\sigma\geq \sigma_{\Gamma,F}$ and for any compact subset $K\subset X$ there exists a positive contant $C>0$ such that for all $x$ and $y$ in $\Gamma K\subset X$: $$\frac{1}{C}{\rm e}^{ {d^{F}(x,y) -\sigma d(x,y)}}\leq \nu_{x}^{F}(O_{R}(x,y))\leq C {\rm e}^{{d^{F}(x,y) -\sigma d(x,y)}}.$$
	\end{lemma}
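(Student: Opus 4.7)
The plan is the classical Patterson--Sullivan shadow lemma argument with the Gibbs cocycle $C^{F-\sigma}_v$ in place of the Busemann cocycle; this is essentially the Mohsen / Paulin--Pollicott--Schapira approach. The first step is the key geometric estimate: there exists a constant $C_{0}=C_{0}(R,\widetilde{F},\Gamma)$ such that for all $x,y\in X$ and all $v\in O_R(x,y)$,
$$\bigl| C^{F-\sigma}_v(x,y) + d^F(x,y) - \sigma\, d(x,y)\bigr| \leq C_{0}.$$
The $-\sigma\, d(x,y)$ term is the standard Busemann estimate on shadows: any $v\in O_R(x,y)$ yields a geodesic $[xv)$ passing within distance $R$ of $y$, so $[yv)$ and the terminal portion of $[xv)$ fellow-travel with exponential convergence in pinched negative curvature. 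The $d^F(x,y)$ term then follows from H\"older continuity of $\widetilde{F}$ combined with this integrable exponential decay; passing to the limit $t\to+\infty$ in~\eqref{gibbs} delivers the estimate.

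Combining this estimate with the conformality rule coming from the definition of a Patterson density,
$$\nu_x^F\bigl(O_R(x,y)\bigr) = \int_{O_R(x,y)} e^{-C^{F-\sigma}_v(x,y)}\, d\nu_y^F(v) \asymp e^{d^F(x,y) - \sigma\, d(x,y)}\, \nu_y^F\bigl(O_R(x,y)\bigr),$$
the lemma reduces to showing that $\nu_y^F(O_R(x,y))$ stays bounded above and below by positive constants, uniformly for $x,y\in \Gamma K$.

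For the upper bound, write $y=\gamma y_0$ with $y_0\in K$; $\Gamma$-equivariance gives $\nu_y^F(O_R(x,y))\leq \|\gamma_\ast \nu_{y_0}^F\| = \|\nu_{y_0}^F\|$, which is uniformly bounded as $y_0$ ranges over the compact $K$. The lower bound is the main obstacle. By the same equivariance $\nu_y^F(O_R(x,y)) = \nu_{y_0}^F\bigl(O_R(\gamma^{-1}x, y_0)\bigr)$, so one has to prove that $\nu_{z_0}^F(O_R(z,z_0))\geq\varepsilon_0>0$ uniformly in $z\in X$ and $z_0\in K$, for $R$ large enough. The difficulty is that when $z$ is far from $z_0$ the shadow $O_R(z,z_0)$ shrinks to a tiny cap in the Bourdon metric at $z_0$, yet it must still carry positive mass. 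I would handle this via Patterson's doubling trick: choosing $R$ larger than the diameter of a suitable fundamental domain near the convex hull $CH(\Lambda_\Gamma)$, one finds an orbit point $\gamma z_0$ close to the geodesic ray $[z_0 z)$, and a Vitali-type finite covering of $\Lambda_\Gamma$ by preshadows $\{O_R(z_0,\gamma z_0)\}_{\gamma\in\Gamma}$, together with the conformality estimate above, yields the uniform lower bound. The hypothesis $\sigma\geq \sigma_{\Gamma,F}$ is precisely what controls the weighted Poincar\'e-type sums underlying this covering argument.
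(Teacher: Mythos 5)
The paper itself offers no proof of Lemma~\ref{moshen}: it cites Mohsen and Paulin--Pollicott--Schapira (their Proposition~11.1). So I am comparing your sketch against the standard argument in those references.

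Your first three steps are correct and are exactly the standard reduction. The key cocycle estimate you state is precisely Lemma~\ref{fondamental}, Item~(2), applied with $F-\sigma$ in place of $F$ (recall $C^{F-\sigma}_v(x,y)=C^F_v(x,y)+\sigma\beta_v(x,y)$ and $d^{F-\sigma}(x,y)=d^F(x,y)-\sigma d(x,y)$). The conformality identity and the resulting reduction to bounding $\nu_y^F\bigl(O_R(x,y)\bigr)$ above and below are right, and the upper bound via equivariance and compactness of $K$ is fine.

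The lower bound is where you go wrong, and the error starts with the geometry. You write that ``when $z$ is far from $z_0$ the shadow $O_R(z,z_0)$ shrinks to a tiny cap in the Bourdon metric at $z_0$.'' This is false. In the metric $d_z$ based at the light source $z$, the shadow is indeed a cap of radius roughly ${\rm e}^{-d(z,z_0)}$ around $w^{z_0}_z$ (Lemma~\ref{lemme22}); but you are measuring with $\nu^F_{z_0}$, so what matters is the picture at $z_0$. There, the \emph{complement} $\partial X\setminus O_R(z,z_0)$ is the small set: using the $\delta$-hyperbolic inequality one checks that any $v$ whose geodesic $[zv)$ misses $B(z_0,R)$ has $(z,v)_{z_0}\gtrsim R$, hence lies in the ball $B_{z_0}\bigl(w^z_{z_0},\,C{\rm e}^{-R}\bigr)$, of radius controlled only by $R$, independent of $d(z,z_0)$. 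The shadow itself is therefore a \emph{large} set at $z_0$. Consequently, the ``Patterson doubling trick'' and Vitali covering you invoke are aimed at the wrong target: coverings of $\Lambda_\Gamma$ by shadows arise in the proof of ergodicity and in the divergence-type counting estimates, not in the lower bound of the shadow lemma, and the condition $\sigma\geq\sigma_{\Gamma,F}$ does not enter here through convergence of Poincar\'e-type sums.

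The correct lower-bound argument is as follows. For $R$ large, $\nu^F_y\bigl(\partial X\setminus O_R(x,y)\bigr)\leq\nu^F_y\bigl(B_y(w^x_y,C{\rm e}^{-R})\bigr)$. One must show this tends to $0$ as $R\to\infty$, uniformly over $y$ in the compact $K$ and over $x\in X$ (equivalently over $w^x_y\in\partial X$). Arguing by contradiction, extract a sequence $(y_n,w_n,R_n)$ with $R_n\to\infty$ and $\nu^F_{y_n}\bigl(B_{y_n}(w_n,C{\rm e}^{-R_n})\bigr)$ bounded below; after passing to a subsequence $y_n\to y_\infty\in K$, $w_n\to w_\infty$, weak-$^*$ convergence of the densities and upper semicontinuity of measures on closed sets give $\nu^F_{y_\infty}(\{w_\infty\})>0$ and in fact $\nu^F_{y_\infty}$ concentrated at $w_\infty$; by conformality and $\Gamma$-equivariance this would force $w_\infty$ to be a global fixed point of $\Gamma$ on $\partial X$, contradicting the standing assumption that $\Gamma$ is non-elementary. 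It is this non-elementarity (equivalently, the non-atomicity of the Patterson density at the relevant endpoint), not a covering lemma, that drives the lower bound.
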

	Assuming that $\Gamma$ is convex cocompact we will use the above lemma with $K\subset CH(\Lambda_{\Gamma})$ being the closure of a fundamental domain of the action of $\Gamma$ acting on $CH(\Lambda_{\Gamma})$. If $\Gamma$ is cocompact, then the limit set is the entire geometric boundary and the shadow lemma holds everywhere on $X$. \\

	 We say that $X$ is a $\delta$-hyperbolic space if we have the following inequality: for all     $x,y,z,t\in \overline{X}$ 
\begin{equation}\label{hyperbolic}
(x,z)_{t}\geq \min \lbrace (x,y)_{t},(y,z)_{t} \rbrace-\delta, 
\end{equation}
see \cite[3.17 Remarks (4), p. 433]{BH}. 
Using the Bourdon metric on the boundary we can compare a shadow to certain balls. More precisely we have the following proposition. This lemma, rather easy and well known, will be very useful since the boundary admits the structure of a metric space.
	\begin{lemma}\label{lemme22}
	
	\begin{enumerate}
	\item Let $R\geq 4\delta$. Then $$B(w^{y}_{x},{\rm e}^{-d(x,y)})\subset O_{R}(x,y).$$
	\item Let any $R >0$, and set $C={\rm e}^{2\delta+R}$. Then $$O_{R}(x,y)\subset B(w^{y}_{x},C{\rm e}^{-d(x,y)}).$$
	\end{enumerate}
	
	\end{lemma}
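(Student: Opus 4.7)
The plan is to translate both inclusions into estimates on the Gromov product $(v, w^y_x)_x$ via the definition $d_x(v, w) = e^{-(v, w)_x}$ of the Bourdon metric, then deduce them from $\delta$-hyperbolicity \eqref{hyperbolic} together with the identity $(y, w^y_x)_x = d(x, y)$, which holds because $y$ lies on the geodesic ray from $x$ to $w^y_x$.

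For (1), assume $d_x(v, w^y_x) \leq e^{-d(x, y)}$, i.e.\ $(v, w^y_x)_x \geq d(x, y)$. Parametrize the ray from $x$ to $w^y_x$ by $\alpha$, so that $\alpha(d(x, y)) = y$, and the ray from $x$ to $v$ by $\beta$. A direct computation using the definition of the Gromov product at infinity gives $(\beta(t), v)_x = t$ and $(w^y_x, \alpha(t))_x = t$, so two applications of \eqref{hyperbolic} to the quadruple $\beta(t), v, w^y_x, \alpha(t)$ at basepoint $x$ yield
\[
  (\beta(t), \alpha(t))_x \geq \min\bigl\{t, (v, w^y_x)_x, t\bigr\} - 2\delta.
\]
Specializing to $t = d(x, y) \leq (v, w^y_x)_x$ and recalling that $d(\beta(t), \alpha(t)) = 2t - 2(\beta(t), \alpha(t))_x$, this forces $d(\beta(d(x, y)), y) \leq 4\delta \leq R$, so the ray from $x$ to $v$ meets the closed ball of radius $R$ about $y$ and $v \in O_R(x, y)$.

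For (2), assume $v \in O_R(x, y)$, and let $t_0 \geq 0$ be such that the ray $\beta$ from $x$ to $v$ satisfies $d(\beta(t_0), y) \leq R$; the triangle inequality forces $t_0 \geq d(x, y) - R$. Since $d(\beta(t), y) \leq (t - t_0) + R$ for every $t \geq t_0$, the limit definition of the Gromov product at infinity gives
\[
  (v, y)_x \geq \tfrac{1}{2}\bigl(d(x, y) + t_0 - R\bigr) \geq d(x, y) - R.
\]
Combining this with \eqref{hyperbolic} applied to the triple $v, y, w^y_x$ based at $x$ (with an extra $\delta$ absorbing the passage to the limit at a boundary point) and with $(y, w^y_x)_x = d(x, y)$, I obtain $(v, w^y_x)_x \geq d(x, y) - R - 2\delta$, that is $d_x(v, w^y_x) \leq e^{2\delta + R} e^{-d(x, y)}$.

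The main technical point is the fellow-traveler estimate underlying (1), obtained by iterating \eqref{hyperbolic} twice to produce the explicit constant $4\delta$; the rest is routine manipulation of the definitions of the Bourdon metric, the Gromov product, and the shadow.
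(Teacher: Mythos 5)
Your proof is correct and follows essentially the same route as the paper's: both parts reduce to an estimate on the Gromov product $(v, w^y_x)_x$ via Bourdon's metric, and part (1) uses the identical device (the paper's auxiliary point $z$ on $[xv)$ with $d(x,z)=d(x,y)$ is exactly your $\beta(d(x,y))$, and the two applications of $\delta$-hyperbolicity give the same constant $4\delta$). In part (2) you diverge cosmetically: the paper bounds $(v,y)_x$ by a second application of $\delta$-hyperbolicity through the intermediate point $z \in [xv) \cap B(y,R)$, whereas you bound $(v,y)_x \geq d(x,y) - R$ directly from the limit definition of the Gromov product and the fellow-traveler estimate $d(\beta(t),y) \leq (t-t_0)+R$; you then spend the extra $\delta$ you saved on caution about applying \eqref{hyperbolic} at a boundary point, landing on the same constant $C = e^{2\delta+R}$. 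Both routes are sound and equivalent in substance.
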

\begin{proof}
We first prove the left inclusion. Let $v$ such that $(v,w^{y}_{x})_{x}>d(x,y)$. We  Let $z$ be on $[xv)$ such that $d(x,z)=d(x,y)$.
We have $d(y,z)=d(x,y)+d(x,z)-2(y,z)_{x}$. We have 
\begin{align*}
(y,z)_{x}&\geq \min\{ (y,w^{y}_{x}),(z,w^{y}_{x})\}-\delta\\
& =\min\{ d(x,y),(z,w^{y}_{x})\}-\delta\\
& \geq\min\{ d(x,y),(z,v)_{x},(v,w^{y}_{x})\}-2\delta \\
& \geq\min\{ d(x,y),d(x,z),d(x,y)\}-2\delta \\
& =d(x,y)-2\delta, 
\end{align*}
it follows that $d(y,z)\leq 4\delta$.\\
We now prove the right inclusion. Let $v\in O_{R}(x,y)$ such that $[yv)\cap B_{X}(y,R)\neq \varnothing$ and let $z\in [yv)$ so that $d(y,z)<R$. We have 
\begin{align*}
(v,w^{y}_{x})&\geq \min\{(v,y),(y,w^{y}_{x}) \}-\delta \\
&= \min\{(v,y),d(x,y) \}-\delta \\
&\geq \min\{(v,z)_{x},(z,y)_{x},d(x,y) \}-2\delta \\
&=\min\{d(x,z),(z,y)_{x},d(x,y) \}-2\delta \\
&\geq\min\{d(x,y)-R,d(x,y)-R,d(x,y) \}-2\delta \\
&=d(x,y)-R-2\delta.
\end{align*}

\end{proof}

\subsubsection{Gibbs Cocycle and some geometric properties}
Given $\widetilde{F}: T^{1}X \rightarrow \mathbb{R}$ a $\Gamma$-invariant H\"older-continuous potential we define, as in \ref{gibbs} from the Introduction, the Gibbs cocycle $C^{F}_{v}(x,y)$ where $v\in \partial X$ and $x,y\in X$. We shall give some properties of the Gibbs cocycle but first of all note that if $\widetilde{F}=-1$ then  $$C^{F}_{v}(x,y)=\beta_{v}(x,v).$$
Hence, for every $s\in \mathbb{R}$ we have:
\begin{equation}\label{gibbsbusemann}
C_{v}^{F-s}(x,y)=C_{v}^{F}(x,y)+s\beta_{v}(x,y).
\end{equation}
Observe that if $x$ belongs to the geodesic ray from $y$ to $v$ then 
$$C_{v}^{F}(x,y)=\int_{x}^{y}\widetilde{F}.$$
The Gibbs cocycle satisfies the following cocycle property: for all $x,y,z\in X$ and $v\in \partial X$ we have 
\begin{equation}\label{cocycle}
C_{v}^{F}(x,z)=C_{v}^{F}(x,y)+C_{v}^{F}(y,z)\mbox{ and } C_{v}^{F}(y,x)=-C_{v}^{F}(x,y),
\end{equation}
and the following $\Gamma$-invariance property: for all $\gamma \in \Gamma$, all $x,y\in X$ and $v\in \partial X$: 
\begin{equation}\label{invariance}
C_{\gamma v}^{F}(\gamma x,\gamma y)=C_{v}^{F}(x,y).
\end{equation}
We now provide a lemma stating some useful properties and local estimates of the Gibbs cocycle. 
\begin{lemma}\label{fondamental}
Fix $R>0$ and assume that $\widetilde{F}$ is bounded on $p^{-1}(CH(\Lambda_{\Gamma}))\subset T^{1}X$. There exists  positive constants $C(R), D(R)$ and $E(R)$ so that:
\begin{enumerate}

\item For all $x\in CH(\Lambda_{\Gamma})$ and for all $y\in X$ such that $d(x,y)\leq R$ and for all $v\in \partial X$ we have $$|C^{F}_{v}(x,y) |\leq C(R).$$
\item For all $x$ in $X$, for all $y\in CH(\Lambda_{\Gamma})$ and for all $v\in O_{R}(x,y)$ we have $$\big|C_{v}^{F}(x,y)+d^{F}(x,y) \big|\leq D(R). $$
\item For all $x\in CH(\Lambda_{\Gamma})$ and for all $y,z$ such that $d(y,z)\leq R$ we have $$|d^{F}(x,y)-d^{F}(x,z)|\leq E(R). $$

\end{enumerate}

\end{lemma}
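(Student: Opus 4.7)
The three estimates share a single geometric principle: the line integral of $\widetilde F$ is stable under $R$-bounded perturbations of an endpoint, whether at finite or infinite distance. Two ingredients drive all three arguments. First, since $\Gamma\backslash p^{-1}(CH(\Lambda_\Gamma))$ is compact, the boundedness hypothesis on $\widetilde F$ extends by continuity to a uniform sup $M$ on any fixed tubular neighborhood of $p^{-1}(CH(\Lambda_\Gamma))$. Second, pinched negative curvature supplies the CAT$(-1)$ fellow-traveler estimates: two rays sharing an endpoint $v\in\partial X$ and reparametrized by the Busemann shift $\beta_v(y,x)$ converge at exponential rate; two segments $[x,y]$ and $[x,z]$ from a common initial point with $d(y,z)\leq R$ satisfy $d(\gamma_{x,y}(t),\gamma_{x,z}(t))\leq K R\,\mathrm{e}^{t-\min(d(x,y),d(x,z))}$ on the common parameter range. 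Coupled with H\"older continuity of $\widetilde F$ of exponent $\alpha$, these turn the relevant integral differences into convergent geometric integrals $\int \mathrm{e}^{-\alpha s}\,ds$.

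For item (1), I would expand $C_v^F(x,y)=\lim_{t\to\infty}\bigl(d^F(y,v_t)-d^F(x,v_t)\bigr)$ along rays to $v$, reparametrize the ray from $x$ by $\beta_v(y,x)$ so that it fellow-travels the ray from $y$, and split each integral into a bounded-length initial chunk (contributing at most $O(MR)$) and a tail whose pointwise integrand difference is bounded by $C'(K\mathrm{e}^{-s})^\alpha$, which integrates to a finite constant. For item (3), I would apply the same scheme to the finite segments $[x,y]$ and $[x,z]$: on the common parameter range the CAT$(-1)$ decay plus H\"older continuity gives a bounded integral, and the leftover piece of length at most $|d(x,y)-d(x,z)|\leq R$ contributes at most $MR$.

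For item (2), pick $y'\in[x,v)\cap B(y,R)$, which exists by the shadow hypothesis. Since $x,y',v$ are collinear on a ray, the remark following \eqref{gibbs} gives $C_v^F(x,y')=-d^F(x,y')$. The cocycle identity \eqref{cocycle} then decomposes
\begin{equation*}
C_v^F(x,y)+d^F(x,y)=\bigl(C_v^F(x,y')+d^F(x,y')\bigr)+C_v^F(y',y)+\bigl(d^F(x,y)-d^F(x,y')\bigr).
\end{equation*}
The first bracket vanishes; the second is bounded by $C(R)$ via item (1) applied to the pair $(y,y')$ with $d(y,y')\leq R$; the third is bounded by the method of item (3) applied to the segments $[x,y]$ and $[x,y']$, noting that only the pointwise H\"older estimate on the integrand difference is needed on the common parameter range, while the terminal $MR$ contribution lives in a tubular neighborhood of $y,y'\in CH(\Lambda_\Gamma)$.

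The one genuinely delicate point is the last: in item (2) the base point $x$ is arbitrary, so auxiliary geodesics may traverse regions where $\widetilde F$ is not a priori bounded. This is sidestepped by handling the common parameter range through the pointwise H\"older estimate, which requires only local continuity of $\widetilde F$ along the fellow-traveling segments, and confining the use of the global sup $M$ to the terminal region which by construction sits in a fixed tubular neighborhood of $CH(\Lambda_\Gamma)$.
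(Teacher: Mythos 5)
The paper does not prove this lemma; it refers items (1)--(2) to [Pau, Lemma 3.4] and item (3) to [Pau, Lemma 3.2]. Your reconstruction — exponential fellow-traveling of asymptotic rays or of segments from a common base point, H\"older continuity of $\widetilde F$ to turn the pointwise integrand difference into a convergent geometric integral, and the $L^\infty$ bound $M$ deployed only on a bounded-length terminal segment — is exactly the geometric argument underlying those cited lemmas, so in substance you have reproduced the intended proof. The reduction of item (2) to items (1) and (3) via the collinearity identity $C_v^F(x,y')=-d^F(x,y')$ and the cocycle relation is clean and correct, and the existence of $y'\in[x,v)\cap B(y,R)$ follows as you say from $v\in O_R(x,y)$.

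One point deserves more care. In item (3) as stated, $x\in CH(\Lambda_\Gamma)$ while $y,z$ range over all of $X$, and your leftover segment of length at most $R$ sits near $y$ or $z$, not near $x$. You bound its contribution by $MR$, but $M$ is a supremum over a fixed tubular neighborhood of $p^{-1}(CH(\Lambda_\Gamma))$; if $y,z$ are far from $CH(\Lambda_\Gamma)$, H\"older continuity alone gives no uniform bound on $\widetilde F$ there, so the $MR$ estimate is not available. Either one assumes $\widetilde F$ globally bounded (as [Pau] does in the corresponding lemma), or one restricts $y,z$ to a bounded neighborhood of $CH(\Lambda_\Gamma)$ — which is in fact the case in every use the paper makes of item (3), where $y,z$ lie on a geodesic between two points of $CH(\Lambda_\Gamma)$ or within a fixed distance of such a point. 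Your application inside item (2) is careful on precisely this point, since there $y\in CH(\Lambda_\Gamma)$ and $y'\in B(y,R)$ so the terminal piece does lie in the required tubular neighborhood; but the stand-alone version of item (3) as you prove it carries the same implicit restriction, and it is worth making that explicit rather than folding it silently into the constant $M$.
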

For a proof of Item $(1)$ and $(2)$ see \cite[Lemma 3.4]{Pau} and for Item $(3)$ see \cite[Lemma 3.2]{Pau}.

\subsection{Gibbs Cocycles, cohomology, periods and unitary representations}\label{cohomo'}

Following \cite{Led}, we recall some fundamental correspondences between potential functions, H\"olderian cocycles, and periods. We complete theses fundamental observations by adding a correspondence dealing with unitary boundary representations.\\

We say that a function defined on the boundary $\partial X$ is H\"older-continuous if it is H\"older-continuous with respect to Bourdon's metric associated with some base boint in $X$. Note that this definition does not depend on the choice of the base point. 
 We say that a cocycle $C:\Gamma \times \partial X \rightarrow \mathbb{R}$ is a \emph{H\"olderian cocycle} if for all $\gamma$ the map $C(\gamma, \cdot )$ is H\"older-continuous and if it satisfies the cocycle equality $$C(\gamma_{1}\gamma_{2}, v)= C(\gamma_{1}, \gamma_{2}v)+C(\gamma_{2},v),$$ for all $\gamma_{1},\gamma_{2}\in \Gamma$ and for all $v\in \partial X$.  
We say that two cocycles $C$ and $C'$ are \emph{cohomologous} if there exists a function $H:\partial X \rightarrow \mathbb{R}$ such that
\begin{equation}\label{cohoC}
C(\gamma,\xi)-C'(\gamma,\xi)=H(\gamma \xi)-H(\xi).
\end{equation}
Let $\gamma$ be a hyperbolic isometry, also called \emph{loxodromic} element, and denote by $\gamma^{+}$ its attractive fixed point. Observe that the quantity  $$C(\gamma,\gamma^{+}) $$
depends only on the conjugacy class of $\gamma$. 
Let  $x$ be on the axis of $\gamma$ and consider the cocycle $$C:(\gamma,v) \in\Gamma \times \partial X \mapsto C_{v}^{F}(x,\gamma x).$$
Observe that $$C(\gamma,\gamma_{+})=\int_{\gamma x}^{x}\widetilde{F}$$ and by assumption on the symmetry of $\widetilde{F}$ we have also $$C(\gamma,\gamma_{+})=\int_{ x}^{\gamma x}\widetilde{F}.$$ We call the quantity $\int_{ x}^{\gamma x}\widetilde{F}$ the period of $\gamma$ and we denote it by $\rm{Per}(\gamma)$. The set 
\begin{equation}
\rm{Per}(\widetilde{F}):=\lbrace \rm{Per}(\gamma)\mbox{ with $\gamma$ a loxodromic element} \rbrace,
\end{equation}
is called \emph{the periods of $\widetilde{F}$}. Observe that if $\widetilde{F}=1$ then $\rm{Per}(\gamma)$ is nothing else than the translation length of $\gamma$ and $\rm{Per}(\widetilde{F})$ is the length spectrum of $M$.
\begin{remark}\label{period}
 Observe also that this definition of \emph{periods of $\widetilde{F}$} coincides with the definition of \emph{periods of a H\"olderian cocycle} in \cite{Led}.
 \end{remark}
Let $\widetilde{F}^{*}=T^{1}X\rightarrow \mathbb{R}$ be another H\"older-continuous $\Gamma$-invariant function. We say that \emph{$\widetilde{F}^{*}$ is cohomologous to $\widetilde{F}$}  if there exists a function differentiable along every flow line $G: T^{1}X\rightarrow \mathbb{R}$ such that 
\begin{equation}\label{cohoF}
\widetilde{F}^{*}(v)-\widetilde{F}(v)=\frac{d}{dt}_{|t=0} G(g_{t}v ).
\end{equation}

Consider the cocycle $$C^{F}:(\gamma,v)\mapsto C_{v}^{F}(x,\gamma x).$$ First note that if $\widetilde{F}$ is bounded then $C^{F}$ is H\"older-continuous . Then observe that if $\widetilde{F}^{*}$ is cohomologous to $\widetilde{F}$  then $C^{F}$ and $C^{F^*}$ are cohomologous
(see \cite[\S 3.3 Remarks and Proposition 3.5]{Pau} for more details). The periods are an invariant of the cohomology class of potentials and also of cocycles. We have
\begin{prop}\label{equivcohomo}
Let $\widetilde{F}$ and $\widetilde{G}$ be two H\"older continuous  $\Gamma$-invariant functions on the unit tangent bundle of $X$.  Pick $x\in X$. The following assertions are equivalent 
\begin{enumerate}
\item $\nu^{F}_{x}$ and $\nu_{x}^{G}$ are in the same class.
\item The functions $\widetilde{F}$ and $\widetilde{G}$ have the same periods.
\item The Gibbs cocycles $C^{F}$ and $C^{G}$ associated with $\widetilde{F}$ and $\widetilde{G}$ are cohomologous in restriction to $\Omega \Gamma$.
\end{enumerate} 
\end{prop}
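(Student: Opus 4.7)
The plan is to prove the three equivalences cyclically as $(3)\Rightarrow(2)\Rightarrow(1)\Rightarrow(3)$, the easy directions exploiting the identification $C^F(\gamma,\gamma^+)=\mathrm{Per}_F(\gamma)$ recorded just before the proposition, and the deep direction invoking Livshits' theorem for the geodesic flow on the compact non-wandering set $\Omega\Gamma/\Gamma$.

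For $(3)\Rightarrow(2)$, I would simply evaluate the coboundary relation $C^F(\gamma,v)-C^G(\gamma,v)=H(\gamma v)-H(v)$ at the attractive fixed point $v=\gamma^+$ of any loxodromic $\gamma\in\Gamma$. Since $\gamma\cdot\gamma^+=\gamma^+$, the right-hand side vanishes, so $C^F(\gamma,\gamma^+)=C^G(\gamma,\gamma^+)$, and the paper's observation $C^F(\gamma,\gamma^+)=\mathrm{Per}_F(\gamma)$ (and the analogous identity for $G$) yields $\mathrm{Per}(\widetilde F)=\mathrm{Per}(\widetilde G)$.

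For $(2)\Rightarrow(1)$, I would apply Livshits' theorem. Equal periods force the $\Gamma$-invariant Hölder function $\widetilde F-\widetilde G$ to integrate to zero along every closed orbit of the geodesic flow on the compact, topologically transitive non-wandering set $\Omega\Gamma/\Gamma$. Livshits then produces $\Psi:\Omega\Gamma\to\mathbb R$, smooth along flow lines, with $\widetilde F-\widetilde G=\frac{d}{dt}\big|_{t=0}\Psi\circ g_t$. Integrating along geodesic rays to the boundary yields equality of topological pressures, hence $\sigma_{\Gamma,F}=\sigma_{\Gamma,G}=:\sigma$; and by \cite[Prop.~3.5]{Pau} this flow-line cohomology on $T^1X$ translates into a Hölder coboundary $H$ on $\Lambda_\Gamma$ witnessing $C^F-C^G=H\circ\gamma-H$. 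The measure $e^H\nu^G_x$ then satisfies the Radon-Nikodym transformation rule and the $\Gamma$-equivariance of a Patterson density for $(\Gamma,F)$ of dimension $\sigma$; by uniqueness up to a positive scalar in the convex cocompact setting this measure is proportional to $\nu^F_x$, so the two densities lie in the same class.

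For $(1)\Rightarrow(3)$, I would set $h:=d\nu^F_x/d\nu^G_x>0$ and compare the Radon-Nikodym transformation rules of the two Patterson families after pushing both sides forward by $\gamma$, obtaining via \eqref{gibbsbusemann}
\[
C^F(\gamma,v)-C^G(\gamma,v)=(\sigma_{\Gamma,F}-\sigma_{\Gamma,G})\,\beta_v(x,\gamma x)+\log h(v)-\log h(\gamma^{-1}v)
\]
for $\nu^F_x$-almost every $v\in\Lambda_\Gamma$. Evaluating at $v=\gamma^+$ gives the relation $\mathrm{Per}_F(\gamma)-\mathrm{Per}_G(\gamma)=-(\sigma_{\Gamma,F}-\sigma_{\Gamma,G})\,\ell(\gamma)$, where $\ell(\gamma)$ is the translation length. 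To kill the Busemann contribution, I would compare the shadow estimates of Lemma~\ref{moshen} for both densities on shadows $O_R(x,\gamma_n x)$ with $\gamma_n x$ tending nontangentially to a Lebesgue point $v_0$ of $h$: equivalence of the measures forces the ratio $e^{(d^F-d^G)(x,\gamma_n x)-(\sigma_{\Gamma,F}-\sigma_{\Gamma,G})d(x,\gamma_n x)}$ to stay bounded in $n$, and this combined with the unbounded length spectrum on $\Gamma$-orbits forces $\sigma_{\Gamma,F}=\sigma_{\Gamma,G}$. The displayed identity then collapses to the sought Hölder coboundary relation between $C^F$ and $C^G$ on $\Omega\Gamma$.

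The main obstacle is the Livshits step $(2)\Rightarrow(1)$, together with the conversion, following \cite{Led} and \cite[\S 3.3]{Pau}, of a flow-line cohomology of potentials on $T^1X$ into a Hölder cohomology of Gibbs cocycles on $\Lambda_\Gamma$, plus the invocation of uniqueness for Patterson densities of the same dimension. A secondary technical point is handling the merely measurable density $h$ in $(1)\Rightarrow(3)$: the shadow lemma combined with a Lebesgue-differentiation type argument along $\Gamma$-orbits is needed to extract the coincidence of critical exponents from absolute continuity.
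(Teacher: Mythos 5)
The paper disposes of Proposition~\ref{equivcohomo} by citation: the equivalences $(1)\Leftrightarrow(2)$ and $(1)\Leftrightarrow(3)$ are delegated to Ledrappier \cite[\S III, Prop.~1]{Led}, and $(2)\Rightarrow(3)$ to \cite[Remark~3.1]{Pau}, so your attempt to reconstruct the argument from scratch is a genuinely different (and more ambitious) route. Your chain $(3)\Rightarrow(2)\Rightarrow(1)$ is sound in spirit and very likely mirrors踊Ledrappier: evaluating the coboundary at the attractive fixed point $\gamma^{+}$ (which lies in $\Lambda_{\Gamma}$ for a convex cocompact group, and at which a H\"older $H$ makes pointwise sense) kills the right-hand side and yields equal periods; Livshits' theorem on the compact transitive set $\Omega\Gamma/\Gamma$ then turns equal periods into a flow-line cohomology of $\widetilde F-\widetilde G$, hence $\sigma_{\Gamma,F}=\sigma_{\Gamma,G}$ and a H\"older transfer function on $\Lambda_\Gamma$, and uniqueness of the Patterson density closes the loop. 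This is the correct mechanism for the deep direction.

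The direction $(1)\Rightarrow(3)$ is where I see a real gap. Your plan is to derive $\sigma_{\Gamma,F}=\sigma_{\Gamma,G}$ from the equivalence of $\nu^F_x$ and $\nu^G_x$, via boundedness of the ratio of shadow masses. But the shadow lemma only bounds $(d^{F}-d^{G})(x,\gamma x)-(\sigma_{\Gamma,F}-\sigma_{\Gamma,G})\,d(x,\gamma x)$, and along a loxodromic axis this reproduces exactly the period relation $\mathrm{Per}_F(\gamma)-\mathrm{Per}_G(\gamma)=(\sigma_{\Gamma,F}-\sigma_{\Gamma,G})\,\ell(\gamma)$ you already had; it does not force the two critical exponents to coincide. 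Indeed absolute continuity of the Patterson families does not imply $\sigma_{\Gamma,F}=\sigma_{\Gamma,G}$ at all: taking $\widetilde G=\widetilde F+c$ for a constant $c\neq 0$ gives $\nu^{F}_{x}=\nu^{G}_{x}$ (identical Radon--Nikodym cocycles $C^{F-\sigma_{\Gamma,F}}=C^{G-\sigma_{\Gamma,G}}$), while $\sigma_{\Gamma,G}=\sigma_{\Gamma,F}+c$, the periods of $\widetilde F$ and $\widetilde G$ differ by $c\,\ell(\gamma)$, and $C^{F}-C^{G}=c\,\beta$ is not a coboundary. So condition $(1)$ only pins down the \emph{normalized} cocycle $C^{F-\sigma_{\Gamma,F}}$ up to coboundary; to recover $(2)$ and $(3)$ in the form you (and the paper) state them one needs the implicit normalization $\sigma_{\Gamma,F}=\sigma_{\Gamma,G}$ (equivalently, to phrase $(2)$--$(3)$ in terms of $F-\sigma_{\Gamma,F}$ and $G-\sigma_{\Gamma,G}$), which is exactly the convention used in \cite{Led}. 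You should either assume that normalization up front or restate $(2)$--$(3)$ accordingly; the shadow argument as written cannot supply it.
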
 
\begin{proof}
Thanks to Remark \ref{period} we refer to \cite[\S III, Proposition 1]{Led} for the equivalence between $(1)$ and $(2)$ and between $(1)$ and $(3)$.\\ For $(2)$ implies $(3)$ we refer to \cite[Remark 3.1]{Pau}.

\end{proof}
At the level of unitary representations we  say that $\pi_{\nu_{x}^{F} }$ and  $\pi_{\nu_{x}^{G} }$ are equivalent if there exists a unitary operator $
U:L^{2}(\partial X, \nu^{F}_{x})\rightarrow L^{2}(\partial X, \nu^{G}_{x})$ such that: 
\begin{equation}\label{cohoR}
U\pi_{\nu_{x}^{F} }=\pi_{\nu_{x}^{G} }U.
\end{equation}

\begin{lemma}\label{cohomo}
Let $\nu^{F}$ and $\nu^{F^*}$ be two Patterson densities of dimension $\sigma$. Pick $x$ in $X$ and consider the unitary representations $\pi_{\nu_{x}^{F} }$ and $\pi_{\nu_{x}^{F^*} }$. If $F$ and $F^*$ are cohomologous then $\pi_{\nu_{x}^{F} }$ and  $\pi_{\nu_{x}^{F^*} }$ are equivalent.
\end{lemma}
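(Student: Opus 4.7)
The plan is to exhibit an explicit multiplication operator intertwining $\pi_{\nu_x^F}$ and $\pi_{\nu_x^{F^{*}}}$, driven by a cohomology relation between the two Gibbs cocycles. Let $G\colon T^1X\to\mathbb{R}$ be a $\Gamma$-invariant function, differentiable along each flow line, with $\widetilde{F}^{*}-\widetilde{F}=\tfrac{d}{dt}\big|_{t=0}G\circ g_t$, and set $\Psi(z,v):=G({\rm v}_{z,v})$, where ${\rm v}_{z,v}\in T^1X$ is the unit tangent vector at $z\in X$ pointing toward $v\in\partial X$.

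The first step is to deduce the cocycle cohomology
\[
C^{F^{*}}_v(x,y)-C^F_v(x,y)=\Psi(x,v)-\Psi(y,v).
\]
Integrating \eqref{cohoF} along geodesic segments yields $d^{F^{*}}(z,w)-d^F(z,w)=G(g_{d(z,w)}{\rm v}_{z,w})-G({\rm v}_{z,w})$ for all $z,w\in X$. Substituting $w=v_t$ along a ray toward $v$, subtracting the identities for $z=x$ and $z=y$, and letting $t\to+\infty$, the terms $G({\rm v}_{z,v_t})$ converge by continuity to $\Psi(z,v)$, while the two forward tangent vectors $g_{d(x,v_t)}{\rm v}_{x,v_t}$ and $g_{d(y,v_t)}{\rm v}_{y,v_t}$ sit at the common point $v_t$ and become asymptotic in $T^1X$ because neighboring geodesic rays converge exponentially in pinched negative curvature, so the corresponding values of $G$ cancel in the limit. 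This asymptotic cancellation at infinity is the main technical point.

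Next I would combine this cocycle cohomology with the quasi-invariance formulas $\tfrac{d\nu^F_{\gamma x}}{d\nu^F_x}(v)=e^{C^{F-\sigma}_v(x,\gamma x)}$ and its analogue for $F^{*}$, together with the $\Gamma$-equivariance $\Psi(\gamma z,\gamma v)=\Psi(z,v)$ inherited from the $\Gamma$-invariance of $G$. By Proposition \ref{equivcohomo} the measures $\nu_x^F$ and $\nu_x^{F^{*}}$ lie in the same class, so the Radon-Nikodym derivative $\varphi:=d\nu_x^{F^{*}}/d\nu_x^F$ is well defined, and a short algebraic manipulation shows that $v\mapsto\varphi(v)e^{\Psi(x,v)}$ is $\Gamma$-invariant. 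Mixing of the Gibbs measure implies ergodicity of the $\Gamma$-action on $(\partial X,\nu_x^F)$ by the standard Hopf--Tsuji--Sullivan type argument, so this function is a positive constant $c$, giving $d\nu_x^{F^{*}}=c\,e^{-\Psi(x,\cdot)}\,d\nu_x^F$.

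Finally, I would define $U\colon L^2(\partial X,\nu_x^F)\to L^2(\partial X,\nu_x^{F^{*}})$ by $(U\xi)(v):=c^{-1/2}e^{\Psi(x,v)/2}\xi(v)$, which by the previous paragraph is an isometric isomorphism. A direct computation using the cocycle cohomology of step one verifies $U\pi_{\nu_x^F}(\gamma)=\pi_{\nu_x^{F^{*}}}(\gamma)U$: the $\sigma\beta_v$ terms cancel automatically, and the remaining difference of exponents reduces to $\tfrac{1}{2}[\Psi(x,v)-\Psi(x,\gamma^{-1}v)-(C^{F^{*}}_v(x,\gamma x)-C^F_v(x,\gamma x))]$, which vanishes thanks to step one and the $\Gamma$-equivariance of $\Psi$, establishing \eqref{cohoR}.
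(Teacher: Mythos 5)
Your overall strategy matches the paper's in spirit---build an explicit multiplication operator that intertwines---but there is a genuine gap: the sentence invoking mixing and the Hopf--Tsuji--Sullivan argument to conclude that $\varphi\,{\rm e}^{\Psi(x,\cdot)}$ is a \emph{constant} uses a hypothesis that Lemma~\ref{cohomo} does not carry. The lemma is stated for arbitrary Patterson densities of dimension $\sigma$ with cohomologous potentials; there is no ergodicity or mixing assumption, and the paper's own proof makes no use of one. As written, your argument therefore proves a strictly weaker statement than the lemma asserts.

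The detour is also unnecessary, which is the quickest way to close the gap. Once you know $\nu_x^F$ and $\nu_x^{F^*}$ lie in the same measure class (Proposition~\ref{equivcohomo}), set $\varphi:=d\nu_x^{F^*}/d\nu_x^F$ and define $U\xi:=\varphi^{-1/2}\xi$; this is tautologically an isometry of $L^2(\partial X,\nu_x^F)$ onto $L^2(\partial X,\nu_x^{F^*})$. The intertwining relation then drops out of the chain rule for Radon--Nikodym derivatives combined with the quasi-invariance and $\Gamma$-equivariance of both Patterson densities:
\[
\varphi(\gamma^{-1}v)\;=\;\frac{d\nu_{\gamma x}^{F^*}}{d\nu_{\gamma x}^{F}}(v)\;=\;{\rm e}^{\,C_v^{F^*-\sigma}(x,\gamma x)-C_v^{F-\sigma}(x,\gamma x)}\,\varphi(v),
\]
which is exactly the identity your last paragraph needs. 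No constant $c$, no transfer function $G$ or $\Psi$, and no ergodicity are required: the $\Gamma$-invariance of $\varphi\,{\rm e}^{\Psi(x,\cdot)}$ that you correctly observe already suffices (you can absorb a $\Gamma$-invariant function into $U$ without changing the intertwining computation), and in fact even that is bypassed once you take $\varphi^{-1/2}$ directly as the multiplier. Finally, your opening re-derivation of the cocycle cohomology via asymptotic cancellation of $G$-values is correct in outline but reproves a fact the paper simply quotes from \cite{Pau}; the paper's proof amounts to ``cohomologous potentials give cohomologous Gibbs cocycles, then multiply by ${\rm e}^{H/2}$ with $H$ as in (\ref{cohoC}).''
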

\begin{proof}
Since $\widetilde{F}$ and $\widetilde{F}^{*}$ are cohomologous then the cocycles $C^{F}:(\gamma,v)\mapsto C_{v}^{F}(x,\gamma x)$ and $C^{F^{*}}:(\gamma,v)\mapsto C_{v}^{F^{*}}(x,\gamma x)$ are cohomologous. Thus the multiplication operator by ${\rm e}^{\frac{1}{2}H}$ from  $L^{2}(\partial X,\nu^{F}_{x})$ to $L^{2}(\partial X,\nu^{F^{*}}_{x})$ intertwines the unitary representations $\pi_{\nu_{x}^{F} }$ and $\pi_{\nu_{x}^{F^{*}} }$ where $H:\partial X \to \mathbb{R}$ satisfies the identity (\ref{cohoC}).
\end{proof}

\subsection{Gibbs measures and Roblin-Paulin-Pollicott-Schapira's equidistribution theorem}\label{Gibbs}
\subsubsection{Hopf parametrization}
Let us now recall  a parametrization of $T^{1}X$ in terms of the boundary at infinity of $X$. \\
If $\rm{v}$ $=(x,\vec{ v })$ is an element of $T^{1}X$, consider the unique geodesic defined by $\vec{ v }$ represented by an isometry $r:\mathbb{R}\rightarrow X$ such that $r(0)=q({ \rm v})$ and $\frac{d}{dt}_{|t=0}r=\vec{ v }$. We denote by $v_{-}$ and $v_{+}$ the endpoints of the geodesic such that $r(-\infty)=v_{-}$ and  $r(+\infty)=v_{+}$.\\
Let us define $\partial^{2}X=\partial X \times \partial X-\Delta$, where $\Delta$ is the diagonal of $\partial X\times \partial X$. For every base point $x_{0}$ in $X$, the space $T^{1}X$ may be identified with $\partial^{2} X\times \mathbb{R}$, by the map which maps a unit tangent vector $ {\rm v} $ to the triple $(v_{-},v_{+},t)$ where $t$ represents the algebraic distance on the image of the geodesic represented by $r$ between $r(0)$ and the closest point of the geodesic to $x_{0}$. This parametrization, depending a priori on $x_{0}$, differs from the one defined by another base point $x_{0}'$ only by an additive term on the third factor (independent of the time $t$).\\

\subsubsection{The potential gap}
For all $x$ in $X$ and for all $v,w\in \partial X$ define the \emph{gap map} as $$D_{x,F}(v,w):=\exp{\bigg( \frac{1}{2}\bigg(\int_{x}^{w_{t}}\widetilde{F} -\int_{v_{t}}^{w_{t}}\widetilde{F}+ \int_{v_{t}}^{x}} \widetilde{F} \bigg) \bigg).$$
We observe that $D_{x,F}$ generalizes Bourdon's metric $d_{x}$ since for $F=-1$ we obtain $D_{x,F}=d_{x}$. Note the $\Gamma$ invariance property $D_{\gamma x, F}(\gamma v,\gamma w)=D_{x,F}(v,w)$ for all $\gamma \in \Gamma$ and for all $v,w\in \partial X$.
\subsubsection{The Gibbs states of $(\Gamma,F)$}
Let $\sigma$ be a real number and let $(\nu^{F}_{x})_{x\in X}$ be a Patterson density of dimension $\sigma$ for $(\Gamma,F)$. Once we have fixed a base point $x_{0}\in X$ and used the Hopf parametrization, define \emph{the Gibbs measures on $T^{1}X$ associated with $(\nu^{F}_{x})_{x\in X}$} as   
\begin{equation}\label{Gibbsmeasure}
dm(v)=\frac{d\nu_{x_{0} }(v_{-})  d\nu_{x_{0} }(v_{+}) dt }{D^{2}_{F-\sigma,x_{0}}(v_{-},v_{+})}.
\end{equation}
The groups $\Gamma$ and $\mathbb{R}$ act on $\partial^{2} X\times \mathbb{R}$ via $\gamma(v_{-},v_{+},t)=(\gamma v_{-}, \gamma v_{+},t)$ and via the goedesic flow $s(v_{-},v_{+},t)=(v_{-},v_{+},t+s)$. Observe that both actions commute. Thus define $m_{F}$ on $\Gamma \backslash T^{1}X=T^{1}M$, and we call $m_{F}$ \emph{the Gibbs measures on $T^{1}M$ associated with $(\nu^{F}_{x})_{x\in X}$}. If $\|m_{F}\|<\infty$ we say that $m_{F}$ is finite. The finiteness of the Gibbs measures will  always be satisfied when we consider convex cocompact groups.

\subsubsection{Mixing property of Gibbs measures}

 We say that $g_{t}$ is mixing on $\Gamma\backslash T^{1}X$ with respect to $m_{F}$ if for all bounded Borel subsets $A,B \subset \Gamma\backslash T^{1}X$ we have $\lim_{t \rightarrow +\infty} m_{F}(A\cap g_{t}(B))=m_{F}(A)m_{F}(B)$. 

There exist a condition which guarantees that the geodesic flow on $T^{1}X$ is mixing and it is related to the non-arithmeticity of the spectrum of $\Gamma$ and this is due to Babillot. We refer to \cite[Proposition 7.7]{Bab} for a proof of this fact in the case of Patterson-Sullivan measures and to \cite[Theorem 8.1]{Pau} for Patterson densities. 

We have finished the preparations to state a theorem and one of its corollaries which will be one of our main tools. The main idea of these equidistribution results goes back to the pioneering work of Margulis \cite{M} who made a connection between the mixing property of the geodesic flow with the counting of closed geodesics on a compact negatively curved manifold. The form of the following equidistribution results, due to Paulin, Pollicott and Schapira \cite[Theorem 9.1]{Pau}, is inspired by the results of T. Roblin in \cite[Th\'eor\`eme 4.1.1]{Ro}.
 \begin{theorem}\label{roblin}(Paulin, Pollicott and Schapira)
Let $\Gamma$ be a discrete group of isometries of $X$ and assume that $\sigma_{\Gamma,F}$ is finite and positive. Assume that $m_{F}$ is finite and mixing under the geodesic flow on $T^{1}M$. Then for all $x,y \in X$ and for all $c>0$: $$\frac{\sigma_{\Gamma,F} \|m_{F}\|}{1-{\rm e}^{-c\sigma_{\Gamma,F}}} {\rm e}^{-\sigma_{\Gamma,F} n}\sum_{ \left\{\gamma \in \Gamma| n-c<d( x,\gamma  y)\leq n  \right\}} {\rm{e}}^{d^{F}(x,\gamma y)}D_{\gamma^{-1}  x} \otimes D_{\gamma  y} \rightharpoonup \nu^{F}_{x} \otimes \nu^{F}_{y} $$ as $n\rightarrow +\infty$  with respect to the weak* convergence of $C(\overline{X} \times \overline{X})^{*}$.
\end{theorem}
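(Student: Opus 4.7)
The strategy, pioneered by Roblin in the Patterson--Sullivan case and adapted by Paulin--Pollicott--Schapira to Gibbs densities, is to convert the orbital sum on the left into a geodesic-flow integral on $T^1M$ against $m_F$ and then invoke the mixing hypothesis. By bilinearity of both sides in the test function and a standard density/approximation argument, it is enough to establish the weak-$*$ convergence tested against products $f\otimes g$ with $f,g\in C(\overline X)$ supported in arbitrarily small neighbourhoods of chosen points of $\overline X$.

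\textbf{Geometric set-up via Hopf coordinates.} Use the Hopf parametrization $T^1X\simeq\partial^2X\times\mathbb R$ from Subsection 2.3.1. Fix small open subsets $U^-,U^+\subset\partial X$ and a short time window $(-\eta,\eta)$, and form a dynamical box
\[
B=\{\mathrm v\in T^1X:\ v_-\in U^-,\ v_+\in U^+,\ \text{time coordinate in }(-\eta,\eta)\},
\]
together with an analogous $B'$ localising near $y$. Through the Gibbs formula (\ref{Gibbsmeasure}), the projected masses $m_F(B)$ and $m_F(B')$ on $T^1M$ split as integrals over $U^\pm$ against the Patterson densities, modulated by the potential gap factor $D^{-2}_{F-\sigma_{\Gamma,F},x_0}$.

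\textbf{From mixing to counting.} The mixing assumption gives
\[
\int_{T^1M}\mathbf 1_{B}\cdot(\mathbf 1_{B'}\circ g_t)\,dm_F\;\xrightarrow[t\to\infty]{}\;\frac{m_F(B)\,m_F(B')}{\|m_F\|}.
\]
Unfolding the integral on $T^1M$ as a sum of $\Gamma$-translates on $T^1X$ rewrites the left-hand side as a sum over those $\gamma\in\Gamma$ for which $\gamma\widetilde B'$ intersects $g_t\widetilde B$; geometrically this forces $d(x,\gamma y)=t+O(\eta)$. Using Moshen's shadow lemma (Lemma \ref{moshen}) together with the Gibbs cocycle control of Lemma \ref{fondamental}, each contributing $\gamma$ carries a weight that is, up to a multiplicative error tending to $1$ as $\eta\to 0$, equal to $e^{d^F(x,\gamma y)-\sigma_{\Gamma,F}d(x,\gamma y)}$ times the Patterson mass of the shadows at infinity. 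Averaging the geodesic time variable over a window of width $c$ produces a finite geometric-type sum in $e^{-\sigma_{\Gamma,F}}$ whose inverse yields the factor $\sigma_{\Gamma,F}/(1-e^{-c\sigma_{\Gamma,F}})$, while the $\|m_F\|$ in the numerator of the normalization cancels exactly the denominator coming from the mixing limit. Shrinking $U^\pm$ and $\eta$ and combining the resulting box-indicator estimates via a partition of unity produces the convergence tested against arbitrary continuous $f\otimes g$, hence the weak-$*$ convergence in $C(\overline X\times\overline X)^*$.

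\textbf{Main obstacle.} The delicate point is the simultaneous control of three different scales: the size of the spatial/shadow boxes, the width $\eta$ of the time window defining $B,B'$ and the width $c$ of the distance shell. They have to be sent to their limits in the correct order so that the leading term $m_F(B)m_F(B')/\|m_F\|$ coming from mixing translates, through the Hopf decomposition of $m_F$, into $\nu^F_x(U^-)\nu^F_y(U^+)$, while the multiplicative slack from the shadow lemma and from the difference between $d(x,\gamma y)$ and the actual geodesic-flow time spent in $\widetilde B\cap g_{-t}\gamma\widetilde B'$ disappears uniformly. The Fubini argument in the geodesic time variable is what produces the geometric series and explains the precise shape of the prefactor $\sigma_{\Gamma,F}\|m_F\|/(1-e^{-c\sigma_{\Gamma,F}})$.
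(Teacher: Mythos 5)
The paper does not prove this theorem: it is cited verbatim as Theorem~9.1 of Paulin--Pollicott--Schapira (\cite{Pau}), inspired by Roblin's Th\'eor\`eme~4.1.1 (\cite{Ro}), and the present authors use it as a black box. So there is no proof in the paper to compare your proposal against.

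That said, your outline does reproduce, at the level of strategy, the actual Margulis--Roblin--PPS argument for this type of statement: build dynamical boxes in Hopf coordinates, feed their indicators into the mixing limit for the geodesic flow, unfold the correlation integral on $T^1M$ into a $\Gamma$-sum on $T^1X$, read off the condition $d(x,\gamma y)=t+O(\eta)$ from the geometry of the unfolding, and convert box masses back to orbital weights via Mohsen's shadow lemma together with the Gibbs cocycle estimates; integrating $t$ over a window of width $c$ and normalizing produces the prefactor $\sigma_{\Gamma,F}\|m_F\|/(1-e^{-c\sigma_{\Gamma,F}})$, with $\|m_F\|$ cancelling the denominator from the mixing limit. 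Your diagnosis of the main obstacle (the simultaneous passage to the limit in the box size, the time window $\eta$, and the shell width $c$) is also where the real technical work lies in PPS. The one thing your sketch under-states is that the shadow-lemma constants and the $O(\eta)$ multiplicative slack must be controlled \emph{uniformly} over the $\gamma$ in the unfolded sum, not just $\gamma$ by $\gamma$; PPS handle this through a careful $3\varepsilon$ scheme before passing from box indicators to arbitrary $f\otimes g$. As a blind reconstruction of a theorem the paper imports without proof, this is a faithful outline of the cited argument rather than a competing one.
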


As a corollary we obtain the following result that we shall use in the Step 2 of the computation of the limit in Section \ref{section5}.\\
For a subset $A$ in $\partial X$ with a vertex $x$, denote by $\mathcal{C}_{x}(A)$ that is the union of the geodesic rays or lines starting from $x$ and ending at $A$, and this a subset of $\mathcal{C}_{x}(A)\subset \overline{X}$ so that $\mathcal{C}_{x}(A)\cap \partial X=A$. 

\begin{coro}\label{coroPaulin}
Let $\Gamma$ be a discrete group of isometries of $X$ with a non-arithmetic spectrum. Assume that $m_{F}$ is finite and mixing under the geodesic flow on $T^{1}M$. If $U$ and $V$ are two Borel sets, then for all $x,y \in X$ and for all $c>0$: $$\limsup_{n \rightarrow +\infty} \frac{\sigma_{\Gamma,F} \|m_{F}\|}{1-{\rm e}^{-c\sigma_{\Gamma,F}}} {\rm e}^{-\sigma_{\Gamma,F} n}\sum_{C_{n}(x)} {\rm{e}}^{d^{F}(x,\gamma x)} (D_{\gamma x} \otimes D_{\gamma^{-1}x})(\chi_{\mathcal{C}_{x}(U)} \otimes \chi_{\mathcal{C}_{x}(V)})\leq \nu^{F}_{x}(\overline{U})  \nu^{F}_{x} (\overline{V}).$$ 

\end{coro}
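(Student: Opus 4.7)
The plan is to recognize the sum in the statement as a positive Radon measure on the compact space $\overline X \times \overline X$ evaluated on a product of characteristic functions, to deduce weak* convergence of this measure from Theorem~\ref{roblin}, and then to pass to the limsup by the standard semi-continuity of indicator functions of closed sets under weak* convergence of positive Radon measures.

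Concretely, first I would set
\[
\mu_n := \frac{\sigma_{\Gamma,F}\,\|m_F\|}{1-e^{-c\sigma_{\Gamma,F}}}\, e^{-\sigma_{\Gamma,F} n} \sum_{\gamma \in C_n(x)} e^{d^F(x,\gamma x)}\, D_{\gamma x} \otimes D_{\gamma^{-1} x}
\]
and check that Theorem~\ref{roblin} applied with $y = x$ (and after swapping the two tensor factors via the change of variable $\gamma \mapsto \gamma^{-1}$) yields $\mu_n \rightharpoonup \nu_x^F \otimes \nu_x^F$ weakly* in $C(\overline X\times \overline X)^*$. The relabeling $\gamma \mapsto \gamma^{-1}$ is harmless because $C_n(x)$ is stable under inversion (since $d(\gamma x,x) = d(\gamma^{-1} x, x)$) and because the symmetry hypothesis on $\widetilde F$, combined with the $\Gamma$-invariance of $d^F$, forces $d^F(x,\gamma x) = d^F(x,\gamma^{-1} x)$. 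The discrepancy between the half-open range defining $C_n(x)$ and the range appearing in Theorem~\ref{roblin} is absorbed in the limsup.

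Next, I would dominate the product $\chi_{\mathcal{C}_x(U)}\otimes \chi_{\mathcal{C}_x(V)}$ pointwise by the characteristic function of a product of closed sets. Setting $K_U := \overline{\mathcal{C}_x(U)}$ and $K_V := \overline{\mathcal{C}_x(V)}$ (closures in $\overline X$), one has $\chi_{\mathcal{C}_x(U)} \otimes \chi_{\mathcal{C}_x(V)} \le \chi_{K_U \times K_V}$, so by positivity of $\mu_n$ it suffices to bound $\limsup_n \mu_n(K_U \times K_V)$. Using that $\mathcal{C}_x(\overline U)$ is closed in $\overline X$ (by the convergence of geodesics in pinched negative curvature, any limit of points lying on rays from $x$ to $\overline U$ lies on such a ray), one gets $K_U \subset \mathcal{C}_x(\overline U)$, hence $K_U \cap \partial X \subset \overline U$; since $\nu_x^F$ is supported on $\Lambda_\Gamma \subset \partial X$, this yields $\nu_x^F(K_U) \le \nu_x^F(\overline U)$ and similarly for $V$.

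The final step is an application of the portmanteau theorem: the weak* convergence $\mu_n \rightharpoonup \nu_x^F \otimes \nu_x^F$ together with the convergence of total masses $\mu_n(\mathbf{1}) \to (\nu_x^F\otimes \nu_x^F)(\mathbf{1})$ (obtained by testing against the continuous function $1$) gives, via Urysohn's lemma on the compact Hausdorff space $\overline X \times \overline X$, the inequality $\limsup_n \mu_n(K_U \times K_V) \le (\nu_x^F\otimes \nu_x^F)(K_U \times K_V) = \nu_x^F(K_U)\,\nu_x^F(K_V)$. Chaining this with the bounds above produces the stated limsup estimate. I expect the only non-routine verification to be the topological fact that $\mathcal{C}_x(\overline U)$ is closed in $\overline X$; every other step is a standard manipulation with weak* convergence of positive Radon measures.
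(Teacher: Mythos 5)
Your proof is correct, and since the paper states Corollary \ref{coroPaulin} without proof (as an immediate consequence of Theorem \ref{roblin}), there is nothing to compare against; your argument is the natural one and is the right way to fill the gap. Specializing $y=x$, relabelling $\gamma\mapsto\gamma^{-1}$ (legitimate because $d(x,\gamma^{-1}x)=d(\gamma x,x)$ and because the $\Gamma$-invariance of $d^F$ together with (\ref{sym}) give $d^F(x,\gamma^{-1}x)=d^F(\gamma x,x)=d^F(x,\gamma x)$), dominating $\chi_{\mathcal{C}_x(U)}\otimes\chi_{\mathcal{C}_x(V)}$ by the indicator of the closed rectangle $\mathcal{C}_x(\overline U)\times\mathcal{C}_x(\overline V)$, and then invoking the portmanteau upper bound for closed sets is exactly what is needed. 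Your topological remark is also correct: the cone $\mathcal{C}_x(\overline U)$ is closed in $\overline X$ by compactness of $\overline U$ and convergence of geodesic rays, and since $\mathcal{C}_x(\overline U)\cap\partial X=\overline U$ and $\nu_x^F$ is carried by $\Lambda_\Gamma\subset\partial X$, this gives $\nu_x^F\big(\mathcal{C}_x(\overline U)\big)\le\nu_x^F(\overline U)$.

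Two small points would be worth making explicit. First, the statement as printed, with the constant $\sigma_{\Gamma,F}\|m_F\|/(1-{\rm e}^{-c\sigma_{\Gamma,F}})$ but summation over the width-one annulus $C_n(x)$, only makes sense for $c=1$ (letting $c\to 0^+$ would make the left side blow up); you tacitly and correctly take $c=1$. Second, your sentence that the discrepancy between $C_n(x)=\{n-1\le d<n\}$ and the theorem's annulus $\{n-1<d\le n\}$ is ``absorbed in the limsup'' is a little too quick, since neither range contains the other. The clean fix is to observe $C_n(x)\subset\{n-1-\varepsilon<d\le n\}$ for every $\varepsilon\in(0,1)$, apply Theorem \ref{roblin} with $c=1+\varepsilon$ (which yields, via portmanteau, an upper bound with the extra factor $\frac{1-{\rm e}^{-(1+\varepsilon)\sigma_{\Gamma,F}}}{1-{\rm e}^{-\sigma_{\Gamma,F}}}$), and then let $\varepsilon\to 0^+$. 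With this adjustment the argument is complete.
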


We recall that we have defined the normalization constant $c_{\Gamma,F}$ as $$c_{\Gamma,F}:=  \frac{\sigma_{\Gamma,F} \|m_{F}\|}{1-{\rm e}^{-\sigma_{\Gamma,F}}} .$$

\section{The Harish-Chandra function}\label{section3}

The goal of this section is to prove the following estimate on the Harish-Chandra function.

\begin{prop}(Harish-Chandra's estimate)\label{HCH} Let $\nu^{F}=(\nu^{F}_{x})_{x\in X}$ be a Patterson density of dimension $\sigma_{\Gamma,F}$. There exists a constant $R>0$ and a constant $C > 0$ (depending on $R$) such that for all $\gamma \in \Gamma$  satisfying $d(x,\gamma x)\geq R$ with $x$ in $CH(\Lambda_{\Gamma})$ we have 

\begin{center}

$C^{-1}d(x,\gamma x) {\rm e}^{{\frac{1}{2} d^{F}(x,\gamma x) -\frac{1}{2}\sigma_{\Gamma,F} d(x,\gamma x) }} \leq \phi_{x}(\gamma ) \leq Cd(x,\gamma x)  {\rm e}^{{\frac{1}{2} d^{F}(x,\gamma x) -\frac{1}{2}\sigma_{\Gamma,F} d(x,\gamma x) }}.$ 

\end{center}

\end{prop}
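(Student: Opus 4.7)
The strategy is to partition $\partial X$ into annular regions along the geodesic $[x,\gamma x]$, control the Gibbs cocycle pointwise on each annulus via the cocycle identity and Lemma~\ref{fondamental}(2), and conclude with the Shadow Lemma.

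Unfolding the definitions, $\phi_x(\gamma)=\int_{\partial X}e^{\frac{1}{2}C^{F-\sigma}_v(x,\gamma x)}\,d\nu_x^F(v)$. Fix $R>4\delta$ and set $n=\lfloor d(x,\gamma x)\rfloor$. Since $x\in CH(\Lambda_\Gamma)$, the segment $[x,\gamma x]$ lies entirely in $CH(\Lambda_\Gamma)$; let $z_0=x,z_1,\ldots,z_n$ be integer-spaced points along it, set $\Omega_k:=O_R(x,z_k)$ (so $\Omega_0=\partial X$), and define the annuli $\Sigma_k:=\Omega_k\setminus\Omega_{k+1}$, which partition $\partial X$. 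The key geometric claim is that for every $v\in\Sigma_k$,
\[
C^{F-\sigma}_v(x,\gamma x)=(d^F-\sigma d)(x,\gamma x)-2(d^F-\sigma d)(x,z_k)+O(1),
\]
with the $O(1)$ depending only on $R$ and $\delta$. To prove this, write $C^{F-\sigma}_v=C_v^F+\sigma\beta_v$ via~\eqref{gibbsbusemann}, insert $z_k$ using the cocycle identity~\eqref{cocycle}, and treat the two resulting summands. For $C^{F-\sigma}_v(x,z_k)$, the hypothesis $v\in\Omega_k$ together with $z_k\in CH(\Lambda_\Gamma)$ allow Lemma~\ref{fondamental}(2) to be applied both to $\widetilde F$ (controlling $C_v^F$) and to the constant potential $-1$ (controlling $\beta_v$). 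For $C^{F-\sigma}_v(z_k,\gamma x)$, a Gromov-product estimate in the triangle $(x,\gamma x,v)$ gives $(v,\gamma x)_x=k+O(R,\delta)$, so by $\delta$-thinness the ray $[\gamma x,v)$ passes within a bounded distance of $z_k$, i.e.\ $v\in O_{R'}(\gamma x,z_k)$ for some $R'=R'(R,\delta)$. Lemma~\ref{fondamental}(2) then applies to the other side, and the symmetry~\eqref{sym} of $d^F$ delivers the claimed form.

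Combining this identity with the Shadow Lemma (Lemma~\ref{moshen}), $\nu_x^F(\Omega_k)\asymp e^{(d^F-\sigma d)(x,z_k)}$, one obtains
\[
\int_{\Sigma_k}e^{\frac{1}{2}C^{F-\sigma}_v(x,\gamma x)}\,d\nu_x^F\;\asymp\;e^{\frac{1}{2}(d^F-\sigma d)(x,\gamma x)}\cdot\frac{\nu_x^F(\Sigma_k)}{\nu_x^F(\Omega_k)}.
\]
The upper bound follows immediately from $\nu_x^F(\Sigma_k)/\nu_x^F(\Omega_k)\leq 1$ and summation over the $n+1$ values of $k$, producing the linear factor $d(x,\gamma x)$.

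The principal obstacle is the lower bound, which requires $\nu_x^F(\Sigma_k)/\nu_x^F(\Omega_k)\geq c>0$ on a positive-density subset of indices. The Shadow Lemma controls the nested shadow measures only up to multiplicative constants, and Lemma~\ref{fondamental}(3) shows that the ratio $\nu_x^F(\Omega_{k+1})/\nu_x^F(\Omega_k)$ is bounded but not a priori uniformly less than~$1$. I would resolve this by coarsening the partition: replace the unit step by a large step $c_0\geq 1$ and work with $\Sigma_k^{(c_0)}:=\Omega_{kc_0}\setminus\Omega_{(k+1)c_0}$. Choosing $c_0$ sufficiently large (depending on $R$, the Shadow Lemma constant, the bound on $\widetilde F$ on $p^{-1}(CH(\Lambda_\Gamma))$ via Lemma~\ref{fondamental}(3), and $\sigma_{\Gamma,F}$) so that $\nu_x^F(\Omega_{(k+1)c_0})\leq\tfrac{1}{2}\nu_x^F(\Omega_{kc_0})$ uniformly in $k$, the resulting $\sim n/c_0$ annuli each contribute a uniformly positive multiple of $e^{\frac{1}{2}(d^F-\sigma d)(x,\gamma x)}$, yielding the desired lower bound with the correct linear growth in $d(x,\gamma x)$.
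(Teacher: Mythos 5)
Your proof follows essentially the same route as the paper: an annular decomposition of $\partial X$ along the geodesic $[x,\gamma x]$ into sets $\Sigma_k=O_R(x,z_k)\setminus O_R(x,z_{k+1})$, a pointwise control of the Gibbs cocycle on each annulus, and the Shadow Lemma. (The paper indexes the $z_i$ from $\gamma x$ back to $x$ with spacing $R/2$, but that is merely bookkeeping.) Your single identity $C^{F-\sigma}_v(x,\gamma x)=(d^F-\sigma d)(x,\gamma x)-2(d^F-\sigma d)(x,z_k)+O(1)$ is exactly what the paper's Propositions~\ref{alvarez} and~\ref{key'} give when combined via the cocycle relation and alignment of $x,z_k,\gamma x$; packaging it this way is cleaner. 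For the piece $C^{F-\sigma}_v(z_k,\gamma x)$ you use the Gromov-product/thin-triangle argument to place $v\in O_{R'}(\gamma x,z_k)$ and then invoke Lemma~\ref{fondamental}(2) from the $\gamma x$ side together with the symmetry of $d^F$; the paper instead argues through horospheres and Lemma~\ref{alv1}. Both are correct, and yours is arguably more direct, though note it is precisely here that the symmetry hypothesis on $\widetilde F$ enters, as in the paper.

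Where you go beyond the paper is the lower bound. The paper proves only the upper bound in detail and asserts the lower bound ``follows the same method,'' but, as you observe, summing lower bounds on the annuli requires $\nu^F_x(\Sigma_k)\geq c\,e^{(d^F-\sigma d)(x,z_k)}$, and the Shadow Lemma only controls $\nu^F_x(O_R(x,z_k))$, not the difference of two nested shadows. Your coarsening idea --- replacing unit step by a large $c_0$ so that $\nu^F_x(\Omega_{(k+1)c_0})\leq\tfrac12\nu^F_x(\Omega_{kc_0})$ --- is the right move, and is genuinely needed. However, the justification you sketch is not yet complete: to get that inequality uniformly from the Shadow Lemma you would need
\[
\nu^F_x(\Omega_{(k+1)c_0})/\nu^F_x(\Omega_{kc_0})\leq C(R)^2\,e^{(d^F-\sigma d)(z_{kc_0},z_{(k+1)c_0})}\leq C(R)^2\,e^{(\sup|\widetilde F|-\sigma_{\Gamma,F})\,c_0},
\]
which tends to $0$ as $c_0\to\infty$ only if $\sigma_{\Gamma,F}>\sup|\widetilde F|$ on the relevant compact, and that inequality is not guaranteed (e.g.\ take $\widetilde F$ a large negative constant). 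The lower bound of the proposition is nonetheless true --- one can check it directly in the constant-potential case, where it reduces to the usual Patterson--Sullivan estimate --- so the missing ingredient is a different reason for the uniform decay of $\nu^F_x(\Omega_k)$, such as placing inside each $\Sigma_k$ a shadow $O_R(x,z'_k)$ of an off-axis point $z'_k\in CH(\Lambda_\Gamma)$ at bounded distance from $z_k$ (so the Shadow Lemma applies to it directly and avoids the telescoping of nested shadows), or a uniform-regularity argument using that $\nu^F_x$ is non-atomic on the compact $\Lambda_\Gamma$. As written, that step is a gap in your argument; it is also, to be fair, a gap the paper itself does not close.
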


\begin{remark}
It would be probably more appropriate to call these estimates \emph{Harish-Chandra Anker's estimates} because Anker has improved estimates established by Harish-Chandra in the setting of semisimple Lie groups. He improved notably  the lower bound by adding a polynomial, see \cite{An}. 
\end{remark}
\subsection{Some technical lemmas}
The following lemma is due to S. Alvarez in \cite{Alv}. Since our methods are rather analytical and since our conventions are different, we give another shorter proof.

\begin{lemma}\label{alv1}
There exits $r>0$ such that for all $x,y,z$ aligned in this order we have for all $v\in\partial X \backslash O_{R}(x,y)$:
$$\beta_{v}(y,z)\leq r-d(y,z).$$
\end{lemma}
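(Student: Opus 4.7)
The plan is to recast the Busemann inequality as a uniform bound on a Gromov product and then exploit $\delta$-hyperbolicity. Using the identity $\beta_v(y,z) = 2(v,z)_y - d(y,z)$ coming from (\ref{buseman}), the desired conclusion $\beta_v(y,z) \leq r - d(y,z)$ is equivalent to $(v,z)_y \leq r/2$, so it suffices to bound $(v,z)_y$ above by a constant depending only on the hyperbolicity constant $\delta$.

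Alignment of $x,y,z$ in that order means the ray from $x$ through $y$ extends through $z$ to a single boundary point $u := w_x^y = w_x^z = w_y^z$. This yields the useful identities $(y,u)_x = d(x,y)$ and $(z,u)_y = d(y,z)$. My strategy is first to bound $(v,u)_y$ by a universal constant, and then to transfer this bound to $(v,z)_y$.

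For the first step, the change-of-basepoint formula for Bourdon's metric, combined with $\beta_u(x,y) = d(x,y)$ (since $y$ lies on $[xu)$) and $\beta_v(x,y) = 2(v,y)_x - d(x,y)$, collapses to the clean identity $(v,u)_y = (v,u)_x - (v,y)_x$. The hypothesis $v \notin O_R(x,y)$ with $R \geq 4\delta$ together with Lemma \ref{lemme22}(1) gives $d_x(v,u) \geq e^{-d(x,y)}$, i.e.\ $(v,u)_x \leq d(x,y) = (y,u)_x$; the inequality $(v,y)_x \leq d(x,y)$ is automatic. Applying the four-point inequality (\ref{hyperbolic}) in both directions to the triple $v,y,u$ based at $x$ then forces $|(v,u)_x - (v,y)_x| \leq \delta$, so $(v,u)_y \leq \delta$.

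To conclude, I apply (\ref{hyperbolic}) to the triple $v,z,u$ based at $y$. Using $(z,u)_y = d(y,z)$ and the elementary bound $(v,z)_y \leq d(y,z)$ (which follows from $\beta_v(y,z) \leq d(y,z)$), the four-point inequality $(v,u)_y \geq \min\{(v,z)_y, d(y,z)\} - \delta$ forces $(v,z)_y \leq (v,u)_y + \delta \leq 2\delta$. Therefore $\beta_v(y,z) \leq 4\delta - d(y,z)$ and one may take $r = 4\delta$. The only delicate point is invoking the four-point inequality in the upper-bound direction, which is exactly what the upper bound $(v,u)_x \leq d(x,y)$ supplied by the shadow lemma secures.
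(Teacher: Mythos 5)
Your proof is correct and follows essentially the same strategy as the paper's: both reduce to bounding the Gromov product $(v, w_y^z)_y$ using the shadow hypothesis together with the four-point inequality at the basepoint $x$ and the change-of-basepoint formula for Bourdon's metric, and then transfer this bound to $(v,z)_y$ via the four-point inequality at $y$. Your clean identity $(v,u)_y = (v,u)_x - (v,y)_x$ is simply a logarithmic reformulation of the paper's manipulation of $d_y(v,w_y^z)$, and your final constant $r = 4\delta$ is what the argument actually yields.
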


\begin{proof}
We have $\beta_{v}(y,z)=2(v,z)_{y}-d(y,z)$. The hyperbolic inequality (\ref{hyperbolic}) implies that $(v,z)_{y}\leq (v,w^{z}_{y})_{y}+\delta$. An upper bound of the quantity $ (v,w^{z}_{y})_{y}$ is equivalent to a lower bound of $d_{y}(v,w_{y}^{z})$.
We have $\beta_{w_{y}^{z}}(x,y)=d(x,y)$ (because $w_{y}^{z}=w_{x}^{y}$) and $\beta_{v}(x,y)=2(v,y)_{x}-d(x,y) $. Thus the hyperbolic inequality (\ref{hyperbolic}) implies
\begin{align*}
\beta_{v}(x,y)  +\beta_{w_{y}^{z}}(x,y)=2(v,y)_{x}\geq &2\min\{(v,w_{x}^{y}),(w_{x}^{y},y)\}-2\delta\\
\geq& 2\min\{(v,w_{x}^{y}),d(x,y)\}-2\delta.
\end{align*} 
Since $v\in\partial X \backslash O_{R}(x,y)$ we have $d(v,w_{x}^{y})\geq {\rm e}^{-d(x,y)}$ by Lemma \ref{lemme22}, equivalently $(v,w_{x}^{y})\leq d(x,y)$.
Thus $${\rm e}^{{\frac{1}{2}(\beta_{v}(x,y)  +\beta_{w_{y}^{z}}(x,y) )}}\geq \frac{{\rm e}^{2\delta}}{d_x(v,w_{x}^{y})}=\frac{{\rm e}^{2\delta}}{d_x(v,w_{y}^{z})}.$$
By the conformal equivalence of the metric on the boundary we have: $$d_{y}(v,w_{y}^{z})=d_{x}(v,w_{y}^{z}) {\rm e}^{{\frac{1}{2}(\beta_{v}(x,y)  +\beta_{w_{y}^{z}}(x,y) )}},$$
hence $${\rm e}^{2\delta} \leq d_{y}(v,w_{y}^{z}),$$ 
we set  $r={\rm e}^{2\delta}$ to conclude the proof.
\end{proof}

Before proceeding we will need to set up some notation. We follow the decomposition used by Alvarez in \cite{Alv}.\\ 
\subsection*{Definition of $A_{i,R}(\gamma )$.} Fix $R>0$ such that Lemma \ref{lemme22} is available.
Let $\gamma$ be in $\Gamma$ such that $d(x,\gamma x)\geq R$  and consider the geodesic $[xw^{\gamma x}_{x})$ starting at $x$ and passing through $\gamma x$ and ending at $w^{\gamma x}_{x} \in \partial X$.
Let $z_{i}$ for $i=0,\cdots, N$  be a finite sequence of points belonging to $[xw_{x}^{\gamma x})$ aligned in the following order: $z_{N},\cdots, z_{0} $,
 with $z_{0}=\gamma x$ and so that the choice of $z_{N}$ satisfies  $d(x,z_{N})<R/2$,  
 and $d(z_{i},z_{i+1})=R/2$. Observe that $d(x,\gamma x)=d(x,z_{N})+N\frac{R}{2}$.\\
 For $i=1,\cdots,N$  we set $$A_{i,R}(\gamma):=O_{R}(x,z_{i}) \backslash O_{R}(x,z_{i-1}), $$
 Observe that $A_{N,R}=\partial X\backslash O_{R}(x,z_{N-1})$.

 We can decompose the boundary as the following disjoint union 
\begin{equation}\label{decompo}
\partial X:= \sqcup^{N}_{i=1} A_{i,R}(\gamma)\sqcup O_{R}(x,\gamma x).
\end{equation}

\begin{prop}\label{alvarez}
We suppose here that $\widetilde{F}$ is symmetric. Let $\nu^{F}$ be a Patterson density of dimension $\sigma\geq \sigma_{\Gamma,F}$ so that the estimates in Mohsen's Shadow lemma hold (Lemma \ref{moshen}).
There exists a constant $C > 0$ such that for all $\gamma \in \Gamma$, $v\in {A}_{i,R}(\gamma)$, and $1 \leq i \leq N$ we have that 

\begin{center}

$C^{-1}{\rm e}^{{d^{F}(z_i,\gamma x)-\sigma d(z_{i},\gamma x)}} \leq {\rm e}^{{  C^{F-\sigma}_{v} (z_{i},\gamma x)}} \leq C{\rm e}^{{{d^{F}(z_i,\gamma x)-\sigma d(z_{i},\gamma x)}}}. $

\end{center}
\end{prop}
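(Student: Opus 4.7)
The strategy is to reduce the estimate to a direct application of Lemma \ref{fondamental}(2) with source $\gamma x$ and target $z_i$, using the symmetry (\ref{sym}) of $\widetilde F$ together with the cocycle relations and Alvarez's Lemma \ref{alv1}. Using (\ref{gibbsbusemann}) with $s=\sigma$ I would split
\[
	C^{F-\sigma}_v(z_i,\gamma x) \;=\; C^{F}_v(z_i,\gamma x) + \sigma\,\beta_v(z_i,\gamma x),
\]
so that, after exponentiating, the desired two-sided comparison reduces to obtaining uniform bounds (depending only on $R$, $\delta$, and $\sigma$) for
\[
	A_v := C^F_v(z_i,\gamma x) - d^F(z_i,\gamma x) \quad\text{and}\quad B_v := \beta_v(z_i,\gamma x) + d(z_i,\gamma x).
\]

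\textbf{Busemann part.} I would control $B_v$ via Alvarez's Lemma \ref{alv1} applied to the aligned triple $(x,z_{i-1},\gamma x)$: since $v\in A_{i,R}(\gamma)$ implies $v\notin O_R(x,z_{i-1})$, the lemma yields $\beta_v(z_{i-1},\gamma x)\leq r - d(z_{i-1},\gamma x)$ with $r=e^{2\delta}$, while $\beta_v(z_{i-1},\gamma x)\geq -d(z_{i-1},\gamma x)$ is trivial. Combining with $|\beta_v(z_i,z_{i-1})|\leq d(z_i,z_{i-1})=R/2$, the cocycle identity for Busemann functions, and the additivity $d(z_i,\gamma x)=R/2+d(z_{i-1},\gamma x)$ coming from collinearity of $z_i,z_{i-1},\gamma x$, one obtains $0\leq B_v\leq R+r$.

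\textbf{Gibbs cocycle part and main obstacle.} Using the cocycle property (\ref{cocycle}) and the symmetry (\ref{sym}) of $\widetilde F$ I would rewrite
\[
	A_v \;=\; -\bigl(C^F_v(\gamma x,z_i)+d^F(\gamma x,z_i)\bigr).
\]
Since $x,\gamma x\in CH(\Lambda_{\Gamma})$ and $CH(\Lambda_{\Gamma})$ is convex, $z_i\in CH(\Lambda_{\Gamma})$, so Lemma \ref{fondamental}(2) yields the required $O(1)$ bound \emph{provided} that $v\in O_{R'}(\gamma x,z_i)$ for some $R'=R'(R,\delta)$. Establishing the shadow containment $A_{i,R}(\gamma)\subset O_{R'}(\gamma x,z_i)$ is the main obstacle. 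I would prove it by a $\delta$-thin triangle analysis on $\triangle(x,\gamma x,v)$: writing $b\in[x,\gamma x]$ for the branch point at distance $(\gamma x,v)_x$ from $x$, the constraints ``$d(z_i,[xv))\leq R$'' and ``$d(z_{i-1},[xv))\geq R$'' force, for $R$ large enough compared to $\delta$, the double inequality $d(x,z_i)-R-O(\delta)\leq (\gamma x,v)_x\leq d(x,z_i)-R/2+O(\delta)$. Hence $d(z_i,b)\leq R+O(\delta)$, and since the tripod property places the branch points of the other two sides within $O(\delta)$ of $b$, the ray $[\gamma x,v)$ passes within $R+O(\delta)$ of $z_i$, proving the containment with $R'=R+O(\delta)$. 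Combining $|A_v|\leq D(R')$ with $\sigma|B_v|\leq\sigma(R+r)$ and exponentiating then yields the Proposition.
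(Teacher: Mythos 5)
Your argument is correct and arrives at the desired two-sided estimate, but it is organized differently from the paper's. The paper does not separate the Busemann part from the $F$-part. After disposing of the case where $\beta_v(z_i,\gamma x)$ is non-negative (which forces $d(z_i,\gamma x)$ to be bounded), it introduces the auxiliary point $z'$ on the ray $[\gamma x,v)$ lying on the same horosphere centred at $v$ as $z_i$; by the cocycle identity $C^{F-\sigma}_v(z_i,\gamma x)=C^{F-\sigma}_v(z_i,z')+C^{F-\sigma}_v(z',\gamma x)$, the second term equals $\int_{\gamma x}^{z'}(F-\sigma)$ since $z'$ lies on $[\gamma x,v)$, which symmetry turns into $d^F(z',\gamma x)-\sigma d(z',\gamma x)$, while in the first term the $\sigma\beta_v$-contribution vanishes because $\beta_v(z_i,z')=0$. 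Everything then reduces to showing $d(z_i,z')=O(1)$ by a thin-triangle argument together with Lemma~\ref{alv1}. Your route instead splits off $\sigma\beta_v(z_i,\gamma x)$ explicitly and controls it by applying Lemma~\ref{alv1} to the triple $(x,z_{i-1},\gamma x)$ (with $v\notin O_R(x,z_{i-1})$, which is exactly what $v\in A_{i,R}(\gamma)$ supplies), then handles the $C^F_v$-part by proving the shadow inclusion $A_{i,R}(\gamma)\subset O_{R'}(\gamma x,z_i)$ and feeding it into Lemma~\ref{fondamental}(2). These two geometric kernels --- the boundedness of $d(z_i,z')$ in the paper and your shadow inclusion --- encode the same $\delta$-thin-triangle fact. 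What your route buys: the two sources of error are isolated cleanly, no case split on the sign of $\beta_v(z_i,\gamma x)$ is needed, and the shadow inclusion is a self-contained, reusable geometric statement. What the paper's route buys: once the horospherical projection $z'$ is introduced, the $\sigma$-part disappears automatically, so only a single quantity along a bounded segment has to be controlled.
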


\begin{proof}
Recall that $C_{v}^{F-\sigma}(z_{i},\gamma x)=C_{v}^{F}(z_{i},\gamma x)+\sigma \beta_{v}(z_{i},\gamma x)$.

If $\beta_{v}(z_{i},\gamma x)\leq 0$ then Lemma \ref{alv1} implies that $d(z_{i},\gamma x)\leq r$ for some positive real number $r$. Therefore the estimates follow from Lemma \ref{fondamental} Item (2).\\
Now we call $z'$ the point of the intersection of the horosphere centered at $v$ passing through $\gamma x$ and the geodesic passing through $v$ and $\gamma x$. If $\beta_{v}(z_{i},\gamma x)<0$,  then $\gamma x,z'$ and $v$ are aligned in this order. Thus we can write $$\lim_{t \rightarrow +\infty}\int_{\gamma x}^{v_{t}}F=\int_{\gamma x}^{z'}F+\lim_{t \rightarrow +\infty} \int_{z'}^{v_{t}}F.$$
Since $\beta_{v}(\gamma x,z')=d(\gamma x,z')$ we have $$C^{F-\sigma}_{v}(z_{i},\gamma x)= C^{F-\sigma}_{v}(z_{i},z')+\int_{\gamma x}^{z'} F-\sigma,$$
besides, the symmetry of $F$ implies 
\begin{equation}\label{revers}
C^{F-\sigma}_{v}(z_{i},\gamma x)=C^{F-\sigma}_{v}(z_{i},z')+\big(d^{F}(z',\gamma x)-\sigma d(z',\gamma x)\big). 
\end{equation}
Notice that
\begin{align*}
d(z_{i},z') &\leq d(z_{i},\gamma x)+d(\gamma x,z')\\
&=d(z_{i},\gamma x)+\beta_{v}(\gamma x,z')\\
&=d(z_{i},\gamma x)+\beta_{v}(\gamma x,z_{i})\\ 
&\leq d(z_{i},\gamma x)+r-d(\gamma x,z_{i})\\
&=r.
\end{align*}
Thus, the fist term on the right hand side equality (\ref{revers}) is bounded by Lemma \ref{fondamental}, Item (1). In the second term on the right hand side equality (\ref{revers}) the quantity $d^{F}(z',\gamma x)\leq d^{F}(z_{i},\gamma x)+E(r)$, for some positive constant $E(r)$ by Lemma \ref{fondamental}, Item (3); and the triangle inequality implying $d(z',\gamma x)\leq d(z_{i},\gamma x)+r$ completes the proof.

\end{proof}

\begin{prop}\label{key'}
There exists a positive constant $C>0$ such that for all $i=1,\cdots, N$ and for all $v\in A_{i,R}(\gamma)$ we have:
$$ C^{-1}{\rm e}^{{-\frac{1}{2}\sigma d(x,z_{i})}}{\rm e}^{{\sigma d(x,z_{i})}} \leq {\rm e}^{{\frac{\sigma}{2} \beta_{v}(x,z_{i})}}\leq C  {\rm e}^{{-\frac{1}{2}\sigma d(x,z_{i})}}{\rm e}^{{\sigma d(x,z_{i})}}.$$

\end{prop}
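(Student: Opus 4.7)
The plan is to reduce the claim to the additive estimate $|\beta_v(x, z_i) - d(x, z_i)| \leq K$ for some constant $K = K(R)$, uniformly for $v \in A_{i,R}(\gamma)$; multiplying by $\sigma/2$ and exponentiating then yields
\[
  e^{-\sigma K / 2}\, e^{\frac{\sigma}{2} d(x, z_i)} \leq e^{\frac{\sigma}{2} \beta_v(x, z_i)} \leq e^{\sigma K / 2}\, e^{\frac{\sigma}{2} d(x, z_i)},
\]
which is exactly the inequality in the statement once $e^{\frac{\sigma}{2} d(x, z_i)}$ is rewritten as $e^{-\frac{\sigma}{2} d(x, z_i)}\, e^{\sigma d(x, z_i)}$.

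To establish the additive estimate I would first observe that by construction $A_{i,R}(\gamma) \subset O_R(x, z_i)$, and that each $z_i$ lies on the geodesic segment $[x, \gamma x]$, hence in $CH(\Lambda_\Gamma)$ by convexity. The cleanest route is then to invoke Item (2) of Lemma \ref{fondamental} applied to the (trivially bounded) constant potential $\widetilde{F} \equiv -1$: in that case the Gibbs cocycle $C^F_v$ is exactly the Busemann cocycle $\beta_v$ and $d^F = -d$, so the conclusion of the lemma reads verbatim $|\beta_v(x, z_i) - d(x, z_i)| \leq D(R)$, giving what we want with $K = D(R)$.

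For a self-contained argument one can also proceed directly from identity (\ref{buseman}), which writes $\beta_v(x, z_i) = 2(v, z_i)_x - d(x, z_i)$: the triangle inequality inside the definition of the Gromov product yields the universal upper bound $(v, z_i)_x \leq d(x, z_i)$, while the shadow condition $v \in O_R(x, z_i)$ produces a time $t_0$ along the geodesic ray $r$ from $x$ to $v$ with $d(r(t_0), z_i) \leq R$, forcing $t_0 \geq d(x, z_i) - R$; the bound $d(r(t), z_i) \leq (t - t_0) + R$ for $t > t_0$ then gives $(v, z_i)_x \geq d(x, z_i) - R$ upon passing to the limit, so $\beta_v(x, z_i) \geq d(x, z_i) - 2R$. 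I do not anticipate any real obstacle: the statement is simply a quantitative form of the standard fact that on a shadow $O_R(x, y)$ the boundary Busemann function is within $O(R)$ of $d(x, y)$, and the only small point to check is uniformity of the constant in $i$, which is automatic since $R$ is fixed and the estimate depends on $R$ alone.
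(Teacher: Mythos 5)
Your proof is correct, and both routes you propose diverge genuinely from the paper's. The paper works through the $\delta$-hyperbolic inequality~(\ref{hyperbolic}) and the shadow--ball comparison of Lemma~\ref{lemme22}, sandwiching $(v,z_i)_x$ against $(v,w_x^{\gamma x})_x$ and using the annular exclusion $v\notin O_R(x,z_{i-1})$ to control that auxiliary quantity, even for the upper bound. Your self-contained second route is substantially leaner: the upper bound $\beta_v(x,z_i)\le d(x,z_i)$ follows from the universal inequality $(v,z_i)_x\le d(x,z_i)$ with no hypothesis on $v$ whatsoever, and the lower bound $\beta_v(x,z_i)\ge d(x,z_i)-2R$ uses only the one-sided inclusion $A_{i,R}(\gamma)\subset O_R(x,z_i)$ and a two-line triangle-inequality estimate along the ray from $x$ to $v$, entirely bypassing $\delta$-hyperbolicity and Lemma~\ref{lemme22}. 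This makes visible that the proposition depends only on $v\in O_R(x,z_i)$, not on the annular structure of $A_{i,R}(\gamma)$ -- an observation the paper's proof obscures. Your first route, specializing Lemma~\ref{fondamental}, Item~(2), to the constant potential $\widetilde F\equiv -1$ (so that $C^F_v=\beta_v$ and $d^F=-d$), is also legitimate -- the hypotheses check out since $z_i\in[x,\gamma x]\subset CH(\Lambda_\Gamma)$ by convexity and $A_{i,R}(\gamma)\subset O_R(x,z_i)$ even for $i=N$, where $O_R(x,z_N)=\partial X$ -- and it packages the result pleasantly as a degenerate case of a lemma the paper already imports from~\cite{Pau}. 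What the paper's route buys is the two-sided control of $(v,w_x^{\gamma x})_x$ that is reused in the surrounding Harish-Chandra estimate; for this proposition in isolation your argument is cleaner and, for the upper bound, essentially trivial where the paper's is not.
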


\begin{proof} 
The proof is based on the hyperbolic inequality (\ref{hyperbolic}). \\

Let us prove the right hand side inequality. We have $\beta_{v}(x,z_{i})=2(v,z_{i})_{x}-d(x,z_{i})$. We shall just control the Gromov product $(v,z_{i})_{x}$ for $v\in A_{i,R}$.

 For all $i$, for all $v$ we have: 
\begin{align*}
(v,w^{\gamma x}_{x})&\geq \min\{ (v,z_{i})_{x}, (z_{i},w^{\gamma x}_{x})_{x}\}-\delta\\
&= \min\{ (v,z_{i})_{x}, d(x,z_{i}) \}-\delta\\
&=(v,z_{i})_{x}-\delta.
\end{align*}
Therefore, 
$${\rm e}^{{\frac{\sigma}{2} \beta_{v}(x,z_{i})}}\leq {\rm e}^{\delta} \frac{{\rm e}^{{-\frac{1}{2}\sigma d(x,z_{i})}}}{d^{\sigma}_{x}(v,w^{\gamma x}_{x})}.$$
If $v\in A_{i,R}$ then $v$ is not in $O_{R}(x,z_{i-1})$, and thus $v$ is not in $B(w^{\gamma x}_{x},{\rm e}^{d(x,z_{i-1})}) $ hence by Lemma \ref{lemme22} we have $d_{x}(v,w^{\gamma}_{x})\geq {\rm e}^{-d(x,z_{i-1})}={\rm e}^{-d(x,z_{i})+\frac{R}{2}}$. We deduce 
$$
{\rm e}^{{\frac{\sigma}{2} \beta_{v}(x,z_{i})}}\leq {\rm e}^{{\sigma(\delta- \frac{R}{2})}}   {\rm e}^{{-\frac{1}{2}\sigma d(x,z_{i})}}{\rm e}^{{\sigma d(x,z_{i})}}.
$$

We prove now the left hand side inequality. 
We have \begin{align*}
(v,z_{i})_{x}&\geq \min\{ (v,w^{\gamma x}_{x})_{x}, (w^{\gamma x}_{x},z_{i})_{x}\}-\delta\\
&= \min\{ (v,w^{\gamma x}_{x})_{x}, d(x,z_{i}) \}-\delta.
\end{align*}
If $v$ is in $A_{i,R}$, then $v$ is not in $B(w^{\gamma x}_{x}, {\rm e}^{-d(x,z_{i-1})} )$. Hence $(v,w^{\gamma x}_{x})_{x}\leq d(x,z_{i-1})=d(x,z_{i})-R/2$. It follows that  
$$(v,z_{i})_{x}\geq d(x,z_{i})-R/2-\delta.$$
We deduce that $$ 
{\rm e}^{{\frac{\sigma}{2} \beta_{v}(x,z_{i})}}\geq {\rm e}^{-\sigma(\delta+ \frac{R}{2})}  {\rm e}^{{-\frac{1}{2}\sigma d(x,z_{i})}}{\rm e}^{{\sigma d(x,z_{i})}}.
$$

Hence, we set $C= {\rm e}^{\sigma(\delta+ \frac{R}{2})} $ to conclude the proof.
\end{proof}

\subsection{Proof of estimates}
 We are ready to establish the Harish-Chandra estimates:
 \begin{proof} We only prove the upper bound, the lower bound follows the same method.\\
 Pick $x\in CH(\Lambda_{\Gamma})$ and write the Harish-Chandra function as a sum of integrals over the partition (\ref{decompo}) as follows:

$$\phi_{x}(\gamma) =\sum_{i=1}^{N } \int_{A_{i,R}} {\rm e}^{{\frac{1}{2} C_{v}^{F-\sigma}(x,\gamma x) }} d\nu_{x}^{F}(v) +\int_{O_{R}(x,\gamma x)}{\rm e}^{{\frac{1}{2} C_{v}^{F-\sigma}(x,\gamma x) }} d\nu_{x}^{F}(v).$$

To prove the proposition we will show that each integral is comparable to ${\rm e}^{{\frac{1}{2}d^{F}(x,\gamma x) - \frac{1}{2} \sigma d(x, \gamma x)}}$.

\textbf{The upper bound over $O_{R}(x,\gamma x)$}: 
\begin{align*}
\int_{O_{R}(x,\gamma x)} {\rm e}^{{\frac{1}{2} C_{v}^{F-\sigma}(x,\gamma x) }}d\nu_{x}^{F}(v)&\leq  C\nu_{x}^{F}(O_{R}(x,\gamma x)){\rm e}^{{-\frac{1}{2}d^{F}(x,\gamma x) +\frac{1}{2}\sigma d(x,\gamma x)}}\\
&\leq C{\rm e}^{{\frac{1}{2}d^{F}(x,\gamma x) -\frac{1}{2}\sigma d(x,\gamma x)}},
\end{align*}

where the first inequality follows from Lemma \ref{fondamental} Item (2) since $\gamma x$ is in $CH(\Lambda_{\Gamma})$, and the second inequality follows from the upper bound of Mohsen's shadow Lemma (Lemma \ref{moshen}), the compact $K$ being the closure of a fundamental domain of the action of $\Gamma$ on $CH(\Lambda_{\Gamma})$.

\textbf{The upper bound over $A_{i,R}(\gamma)$:}
We have established two useful inequalities dealing with the terms we shall control:
the first one follows from Proposition \ref{alvarez}. There exists $C>0$ such that we have for all $i$, for all $v\in A_{i,R}(\gamma)$:
\begin{equation}\label{key1}
{\rm e}^{{\frac{1}{2}C^{F-\sigma}_{v} (z_{i},\gamma x)}}\leq C {\rm e}^{{\frac{1}{2}d^{F} (z_{i},\gamma x)-\frac{1}{2}\sigma d(z_{i},\gamma x)}}.
\end{equation}

The second one is from Proposition \ref{key'}. There exists $C>0$ so that 
\begin{equation}\label{key2}
{\rm e}^{{\frac{\sigma}{2} \beta_{v}(x,z_{i})}}\leq C {\rm e}^{{-\frac{1}{2}\sigma d(x,z_{i})}}{\rm e}^{{\sigma d(x,z_{i})}}.
\end{equation}

 Combining these two estimates will yield the bound over $A_{i,R}(\gamma)$. We will use a constant $C$ which absorbs the other constants. Now estimating over $A_{i,R}(\gamma)$ we get,

\begin{align*}
\int_{A_{i,R}(\gamma)}{\rm e}^{{\frac{1}{2} C_{v}^{F-\sigma}(x,\gamma x) }} {\rm d}\nu_{x}^{F}(v)&= \int_{A_{i,R}(\gamma)}{\rm e}^{{\frac{1}{2} C_{v}^{F-\sigma}(z_{i},\gamma x)}}  {\rm e}^{{\frac{1}{2}C_{v}^{F}(x,z_{i})}} {\rm e}^{{\frac{1}{2}\sigma \beta_{v}(x,z_{i})}}  d\nu_{x}^{F}(v)\\
\mbox{Inequality (\ref{key1})}&\leq C {\rm e}^{{\frac{1}{2}\big(d^{F}(z_{i},\gamma x)-\sigma d(z_{i},\gamma x)}\big)}  \int_{A_{i,R}(\gamma)}{\rm e}^{{\frac{1}{2}C_{v}^{F}(x,z_{i})}}{\rm e}^{{\frac{1}{2}\sigma \beta_{v}(x,z_i)}} d\nu_{x}^{F}(v)\\
 \mbox{Lemma \ref{fondamental} Item (2)}&\leq C {\rm e}^{{\frac{1}{2}\big(d^{F}(z_{i},\gamma x)-\sigma d(z_{i},\gamma x)}\big)} {\rm e}^{{-\frac{1}{2}d^{F}(x,z_{i})}}  \int_{A_{i,R}(\gamma)} {\rm e}^{{\frac{1}{2}\sigma \beta_{v}(x,z_{i})}} d\nu_{x}^{F}(v)\\
 \mbox{Inequality (\ref{key2})} &\leq C {\rm e}^{{\frac{1}{2}\big(d^{F}(z_{i},\gamma x)-\sigma d(z_{i},\gamma x)}\big)} {\rm e}^{{-\frac{1}{2}d^{F}(x,z_{i})}}    {\rm e}^{{-\frac{1}{2}\sigma d(x,z_{i})}}{\rm e}^{{\sigma d(x,z_{i})}}\nu_{x}^{F}(A_{i,R}(\gamma))\\
 &=C{\rm e}^{{\frac{1}{2}d^{F}(z_{i},\gamma x)}}{\rm e}^{{-\frac{1}{2}\sigma d(x,\gamma x)}} {\rm e}^{{-\frac{1}{2}d^{F}(x,z_{i})}}   {\rm e}^{{\sigma d(x,z_{i})}}\nu_{x}^{F}(A_{i,R}(\gamma))\\
\mbox{Moshen's shadow Lemma}&\leq  C {\rm e}^{{\frac{1}{2}d^{F}(z_{i},\gamma x)-\frac{1}{2}\sigma d(x,\gamma x)}} {\rm e}^{{-\frac{1}{2}d^{F}(x,z_{i})}}   {\rm e}^{{\sigma d(x,z_{i})}} {\rm e}^{{d^{F}(x,z_{i})-\sigma d(x,z_{i})}}\\
&=C{\rm e}^{{\frac{1}{2}d^{F}(x,\gamma x) -\frac{\sigma}{2}d(x,\gamma x)}}.
 \end{align*}

Combining the \textbf{Upper bound over $O_{R}(x,\gamma x)$} with the \textbf{Upper bound over $A_{i,R}(\gamma)$} for all $i=1,\cdots ,N$ leads to $$\phi_{x}(\gamma)\leq C(N+1) {\rm e}^{{\frac{1}{2}d^{F}(x,\gamma x) -\frac{\sigma}{2}d(x,\gamma x)}}.$$
Since $N \frac{R}{2}=d(z_{N},\gamma x)\leq d(x,\gamma x)$ we obtain the left hand side inequality of Harish-Chandra's estimates.

\end{proof}

\begin{remark}\label{Aiestim}
In particular we prove that there exists $C>0$ such that for all $i=1,\cdots,N$:\begin{equation}\label{Ai}
\int_{A_{i,R}(\gamma)}{\rm e}^{{\frac{1}{2} C_{v}^{F-\sigma}(x,\gamma x) }} d\nu_{x}^{F}(v)\leq C {\rm e}^{{\frac{1}{2}d^{F}(x,\gamma x)-\frac{1}{2}\sigma d(x,\gamma x)}}.
\end{equation}
\end{remark}

\section{Uniform boundedness via RD}\label{section4}
\subsection{Quasi-regular representations}
Let $\Gamma$ be a discrete countable group acting on a measure space $(S,\nu)$ with a $\Gamma$-quasi-invariant measure $\nu$. This action gives rise to a unitary representation after correction by the square root of the Radon Nikodym derivative of the action:
\begin{equation*}
\pi_{\nu}:\Gamma \rightarrow \mathcal{U}(L^{2}(S,\nu))
\end{equation*}
defined for $\xi \in U(L^{2}(S,\nu))$ and for $s\in S$ as 
\begin{equation}
\big(\pi_{\nu}(\gamma)\xi\big)(s)= \bigg(\frac{d\gamma_{*}\nu}{d\nu}\bigg)^{\frac{1}{2}}(s)\xi(\gamma^{-1}s).
\end{equation}
This unitary representation is called the quasi-regular representation associated with $\Gamma \curvearrowright (S,\nu)$ (also called \emph{Koopman representation}).\\

 In the following we will denote by $\lambda_{\Gamma}:\Gamma\rightarrow \mathcal{U}(\ell^{2}(\Gamma))$ the left regular representation.\\ 

 Recall that a unitary representation $\pi$  is \emph{weakly contained} in a unitary representation $\rho$ if for all functions $f\in\ell ^{1}(\Gamma)$ we have
 \begin{equation}
 \| \pi(f)\|\leq \| \rho (f)\|.
 \end{equation}
 We refer to \cite[Appendix F]{BHV} and to \cite[Section 18]{Dix} for more details.\\
Let $\rho $ be a unitary representation of $\Gamma$
and let $\mu $ be a bounded measure on $\Gamma$ and define the operator $\rho(\mu)$ as: $$\rho(\mu):=\sum_{\gamma \in \Gamma}\mu(\gamma)\rho(\gamma),$$ and observe $\rho(\mu)\in \mathcal{B}(L^{2}(S,\nu)).$ 
\subsection{Spectral characterization of amenable action}
It is well known that an amenable discrete group can be characterized by the fact that the trivial representation is weakly contained in the left regular representation.
  Kuhn was probably inspired by this property to prove an analog result for quasi-regular representations associated with ergodic \emph{amenable actions in Zimmer's sense} in \cite{K}. We describe briefly which notion of amenable action we shall consider.\\
We know since Spatzier in \cite{Spa} that the action of $\pi_{1}(M)$, the fundamental group of a compact manifold $M$, on the geometric boundary of the universal cover of $M$ is amenable in Zimmer's sense with respect to the standard measure class. Eventually, Spatzier and Zimmer showed in \cite[Theorem 3.1]{SZ} that this action is amenable with respect to \emph{any quasi-invariant measure}. Later, after the work of Adams \cite{A1}, Kaimanovich \cite{Ka} proved that the action of a closed subgroup of isometries of a hyperbolic space with a finite critical exponent ( -critical exponent- in the usual sense without a potential function) is \emph{topologically amenable}. In this paper, we consider the action of a discrete group of isometries on the geometric boundary as a topological space. The notion of topological amenability is the more appropriate notion we shall consider since the space appears naturally as a topological space  rather than only as a measurable space.
\begin{defi}
An action  $\Gamma\curvearrowright S$ on a topological space $S$ is topologically amenable if there exists a sequence of continuous maps $$\mu^{n}:s\in S \mapsto \mu^{n}_{s}\in {\rm Prob}(\Gamma)$$ of probabilities on $\Gamma$ such that $$\lim_{n \to +\infty}\sup_{s\in S} \|\gamma_{*}\mu^{n}_{s}-\mu^{n}_{\gamma s} \|\to 0 $$ as $n\to \infty$.
\end{defi}
 It turns out that in the case of a topological space \emph{topologically amenable} and \emph{amenable in Zimmer's sense} are equivalent, see \cite{ADR}. Therefore we will not have to pay attention to any quasi-invariant measure on the geometric boundary.\\ It is shown in \cite{Adams} that for a locally compact group $G$ acting on $(S,\mu)$ that the definition of amenable action in Zimmer's sense is equivalent to the existence of a $G$-equivariant conditional expectation from $L^{\infty}(G\times S)$ to $L^{\infty}(S)$. Hence if $\Gamma$ is a discrete group of isometries of a complete simply connected pinched negatively curved Riemannian manifold $X$, with a finite critical exponent, we have that $(\Gamma \times \partial X, \partial X)$ is a \emph{$\Gamma$-pair} in the sense of \cite{Ana}.

We deduce from \cite[Corollary 3.2.2]{Ana}  that
\begin{prop}\label{amenable}
 Let $\Gamma$ be a discrete group of isometries of $X$ a complete simply connected Riemmanian manifold with pinched curvature, with a finite critical exponent. For any quasi-invariant measures $\nu$ on the geometric boundary $\partial X$ we have for any bounded $\mu$ measure on $\Gamma$  $$\|\pi_{\nu}(\mu)\|\leq \|\lambda_{\Gamma}(\mu)\|.$$
\end{prop}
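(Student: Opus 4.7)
The statement is essentially a packaging of known results, so my plan is to assemble them into a single chain rather than develop new arguments. First, I would invoke Kaimanovich's theorem (cited in the paragraph preceding the proposition) asserting that the action of a discrete group of isometries of a pinched negatively curved simply connected manifold $X$ with finite critical exponent on the geometric boundary $\partial X$ is \emph{topologically amenable}. This is the key geometric input: once it is granted, the remainder is functional-analytic.

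Next, using the equivalence between topological amenability and amenability in Zimmer's sense for actions on (nice) topological spaces, established in \cite{ADR}, I would conclude that $\Gamma \curvearrowright (\partial X,\nu)$ is amenable in Zimmer's sense for \emph{every} quasi-invariant measure $\nu$ on $\partial X$. This is important because the statement is quantified over all quasi-invariant $\nu$, so one wants an amenability property that does not depend on the choice of measure; topological amenability provides exactly this uniformity and then transfers automatically to the measured setting.

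The third step is to apply Adams' characterization recalled in the preceding discussion: amenability of $\Gamma \curvearrowright (\partial X,\nu)$ in Zimmer's sense is equivalent to the existence of a $\Gamma$-equivariant conditional expectation $L^\infty(\Gamma \times \partial X) \to L^\infty(\partial X)$. This is precisely the condition that makes $(\Gamma \times \partial X, \partial X)$ a $\Gamma$-pair in the sense of Anantharaman-Delaroche, so the hypotheses of \cite[Corollary 3.2.2]{Ana} are met.

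Finally, Corollary 3.2.2 of \cite{Ana} asserts that for such a $\Gamma$-pair, the associated Koopman representation $\pi_\nu$ is weakly contained in the left regular representation $\lambda_\Gamma$. Weak containment translates directly into the operator-norm inequality $\|\pi_\nu(\mu)\| \leq \|\lambda_\Gamma(\mu)\|$ for every bounded measure $\mu$ on $\Gamma$, which is exactly the conclusion. The only genuine obstacle is verifying that Kaimanovich's theorem applies in the generality of pinched negative curvature (rather than strictly $\mathrm{CAT}(-1)$ or constant curvature), but this is covered by his treatment, so no new argument is needed.
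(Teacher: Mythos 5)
Your chain of citations — Kaimanovich for topological amenability, \cite{ADR} for the equivalence with Zimmer amenability for every quasi-invariant measure, Adams for the conditional-expectation reformulation making $(\Gamma\times\partial X,\partial X)$ a $\Gamma$-pair, and \cite[Corollary 3.2.2]{Ana} for the weak containment — is exactly the argument the paper assembles in the discussion immediately preceding the proposition. The proposal is correct and takes essentially the same route.
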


\begin{remark}
Indeed, by \cite[Lemma 2.3]{S} due to Shalom with the same hypothesis we have the other inequality and thus we obtain for any bounded $\mu$ measure on $\Gamma$ an equality $$\|\pi_{\nu}(\mu)\|= \|\lambda_{\Gamma}(\mu)\|. $$
\end{remark}
\subsection{Property RD}
The property RD comes from the theory of C*-algebras and has been introduced in the important paper \cite{H} by Haagerup.\\ A length function $|\cdot|$ on a discrete countable group $\Gamma$ is a function $|\cdot|: \Gamma \rightarrow \mathbb{R}^{+}$, satisfying $|e|=0$ where $e$ is the neutral element of $\Gamma$, $|\gamma^{-1}|=|\gamma|$ and $|\gamma_{1}\gamma_{2}|\leq |\gamma_{1}|+|\gamma_{2}|$. Let $s>0$ and define the Sobolev space associated with $\Gamma$ denoted by $H^{s}(\Gamma)$ as the space   $$H^{s}(\Gamma):=\big\lbrace f:\Gamma \rightarrow \mathbb{C} \mbox{ such that } \|f\|^{2}_{H^{s}}:= \sum_{\Gamma} |f(\gamma)|^{2}(1+|\gamma|)^{2s}<\infty \big\rbrace .$$ \\ Given a discrete countable group equipped with a length function $|\cdot|$ we say that \emph{$\Gamma$ satisfies property RD with respect to $|\cdot|$} if the space  $H^{s}$ convolves $H^{s}(\Gamma)*\ell^{2}(\Gamma)\subset \ell^{2}(\Gamma)$ in the following way: $$\mbox{  $\exists C,s>0$ such that  for all $f\in H^{s}(\Gamma),\xi \in \ell^{2}(\Gamma)$, we have $\|f*\xi\|_{2}\leq C \|f\|_{H^{s}}\|\xi\|_{2}$}.$$
 In terms of operator norm, property RD means that there exist two positive constants $C$ and $s>0$ such that the multiplication operator by convolution by a function in $H^{s}(\Gamma)$ is continuous: 
$$\exists C,s>0 \mbox{ such that for all $f\in H^{s}(\Gamma)$ we have } \|\lambda_{\Gamma}(f)\| \leq C\|f\|_{H^{s}}.$$ 
This inequality means, in operator algebraic terms that we have the continuous inclusion 
\begin{equation}
H^{s}(\Gamma)\hookrightarrow
C_{r}^{*}(\Gamma).
\end{equation}
\begin{remark}
If we specialize the property RD to the abelian group $\mathbb{Z}$ (with its standard word length function) we obtain the well known fact, using the Fourier transform, that an element in $L^{2}(\mathbb{S}^{1})$ with Fourier coefficients ``Rapidly Decreasing" to $0$ define a continous function on the circle $\mathbb{S}^{1}$.
\end{remark}
We extract the following inequality expressed in norm of convolution operators established in \cite{Jo} to prove the property RD for convex cocompact groups. 

\begin{prop}\label{RD}
Let $X$ be a complete simply connected Riemannian pinched negatively curved manifold and
let $\Gamma$ be a convex cocompact discrete group of isometries of $X$. Pick a point $x$ in $X$ and recall the definition of an annulus $C_{n}:=C_{n}(x)$. Let $\chi_{n}$ be the characteristic function of $C_{n}$, then $$\|\lambda_{\Gamma}(f\chi_{n})\|\leq Cn\|f\|_{2}.$$
\end{prop}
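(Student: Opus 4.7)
The plan is to follow the classical Haagerup--Jolissaint--de la Harpe strategy for property RD on Gromov-hyperbolic groups, adapted to the geometric length function $|\gamma|:=d(x,\gamma x)$ rather than a word length. Since $\Gamma$ is convex cocompact on the pinched negatively curved space $X$, the \v Svarc--Milnor lemma applied to its cocompact action on $CH(\Lambda_{\Gamma})$ shows that $\Gamma$ is word-hyperbolic and that $|\cdot|$ is quasi-isometric to any word length with bounded additive error. The relevant hyperbolicity constant $\delta$ is that of a $\Gamma$-invariant neighborhood of $CH(\Lambda_{\Gamma})$.

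The crucial geometric input is a triangle-counting estimate: there exists a constant $K$, depending only on $\delta$ and on a fundamental domain for $\Gamma$ on $CH(\Lambda_{\Gamma})$, such that for every $\gamma \in \Gamma$ and all positive integers $n,k$,
\[
\#\bigl\{\alpha \in C_{n}(x) : \alpha^{-1}\gamma \in C_{k}(x)\bigr\} \leq K.
\]
This follows from the $\delta$-thin triangles property \eqref{hyperbolic}: if $d(x,\alpha x) \in [n-1,n)$, $d(\alpha x,\gamma x) \in [k-1,k)$ and $d(x,\gamma x)=m$, then $\alpha x$ must lie within a universally bounded distance of the point at parameter $(n+m-k)/2$ on any geodesic from $x$ to $\gamma x$; convex cocompactness then bounds the number of orbit points in any bounded region.

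From this triangle estimate, Cauchy--Schwarz yields the basic bilinear bound: if $f$ is supported on $C_{n}(x)$ and $g_{k}$ on $C_{k}(x)$, then for each integer $m$,
\[
\|(f*g_{k})\chi_{m}\|_{2} \leq \sqrt{K}\, \|f\|_{2} \|g_{k}\|_{2}.
\]
Since $(f*g_{k})\chi_{m}$ vanishes unless $|n-k|\leq m \leq n+k$, summing in $m$ gives $\|f*g_{k}\|_{2}^{2} \leq K(2\min(n,k)+1)\|f\|_{2}^{2}\|g_{k}\|_{2}^{2}$.

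To conclude, let $g \in \ell^{2}(\Gamma)$ be arbitrary and decompose $g = \sum_{k} g\chi_{k}$. Writing $(f\chi_{n} * g)\chi_{m} = \sum_{k}(f\chi_{n}*g\chi_{k})\chi_{m}$, one sees that only $k \in [|n-m|, n+m]$ contribute, i.e.\ at most $2\min(n,m)+1 \leq 2n+1$ indices. Applying Cauchy--Schwarz in $k$, then summing in $m$ and interchanging summations, one obtains
\[
\|f\chi_{n}*g\|_{2}^{2} \leq (2n+1)\sum_{k,m} \|(f\chi_{n}*g\chi_{k})\chi_{m}\|_{2}^{2} \leq K(2n+1)^{2}\|f\chi_{n}\|_{2}^{2}\|g\|_{2}^{2},
\]
which yields $\|\lambda_{\Gamma}(f\chi_{n})\| \leq Cn\|f\|_{2}$. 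The main obstacle is the triangle-counting estimate: the classical argument is stated for the word length on an abstract hyperbolic group, and one must carefully transfer it to the present geometric length $d(x,\gamma x)$. Pinched negative curvature provides the $\delta$-thinness of geodesic triangles in $X$, while convex cocompactness provides the uniform local finiteness of orbit points that turns this thinness into the sought-after uniform bound $K$.
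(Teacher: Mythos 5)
Your ``triangle-counting estimate'' is false, and this is a genuine gap in the argument. The cardinality $\#\{\alpha \in C_n(x) : \alpha^{-1}\gamma \in C_k(x)\}$ is not uniformly bounded; it grows exponentially in the defect $n+k-m$, where $m=d(x,\gamma x)$. Already in the free group $\mathbb{F}_2$ with its word length one sees this by taking $\gamma=e$: the set in question is $\{\alpha : |\alpha|=n,\ |\alpha^{-1}|=k\}$, which for $n=k$ is the whole sphere of radius $n$ and has cardinality $4\cdot 3^{n-1}$. The geometric justification offered is also incorrect: $\delta$-thinness does not force $\alpha x$ to lie near the geodesic $[x,\gamma x]$. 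It forces $[x,\alpha x]$ to fellow-travel $[x,\gamma x]$ out to distance roughly $(\alpha x,\gamma x)_x=\tfrac{n+m-k}{2}$ before diverging, so $\alpha x$ itself sits at distance roughly $\tfrac{n+k-m}{2}$ from $[x,\gamma x]$ — unbounded whenever $n+k-m$ is large, and hyperbolicity makes the number of orbit points at that distance grow exponentially.

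What is actually available — and what the paper invokes directly as Jolissaint's Proposition 3.2.4 — is the $\ell^2$ estimate: for $f$ supported on $C_k$, $g$ supported on $C_l$, and $|k-l|\leq m\leq k+l$, one has $\|(f*g)\chi_m\|_2\leq c\|f\|_2\|g\|_2$. This is strictly weaker than a pointwise cardinality bound; its proof (going back to Haagerup's argument for free groups) factors each product $\alpha\beta=\gamma$ through the branching point near the Gromov product, so that the convolution sum becomes an inner product of two $\ell^2$ vectors rather than a sum of boundedly many terms. The rest of your write-up — the vanishing of $(f\chi_n*g\chi_k)\chi_m$ outside $|n-k|\leq m\leq n+k$, the summation over $m$, and the Cauchy--Schwarz aggregation over the annular decomposition of $g$ — is essentially the same iteration the paper carries out once Jolissaint's lemma is in hand, and that part is sound. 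But as stated your proof rests on a false counting lemma and cannot be salvaged without replacing it by Jolissaint's $\ell^2$ estimate, which is precisely what the paper cites.
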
 
\begin{proof}
In \cite[Proposition 3.2.4]{Jo}, Jolissaint proves that there exists a positive constant $c$, depending only on the
action of $\Gamma$ on $X$, with the following property:\\ Let $ k,l,m\in \mathbb{N}$. If $k,l$ and $m$   satisfy $|k-l| \leq m \leq k+l$ with $f,g$ are in the group algebra $\mathbb{C}\Gamma$ are supported in $C_k$ and $C_l$ respectively, then
$$\|(f*g) \chi_{m}\|_{2}\leq c \|f \|_{2} \|g \|_{2}.$$
 If $k,l$ and $m$   satisfy $|k-l| >m$ or $m> k+l$, then $$\|(f*g) \chi_{m}\|_{2}=0.$$

Following the techniques in \cite[Lemma 1.3, Lemma 1.4]{H} and in \cite[Proposition 1.2.6]{Jo} we have:
 for $f$ supported in $C_{k}$ and for all $g$ supported in $C_{l}$ that
\begin{align*}
\|(f*g)\chi_{m}\|_{2}&\leq \sum_{l>0}\|(f*g\chi_{l})\chi_{m}\|_{2}\\
&\leq  C \|f \|_{2}\sum_{l=|k-m|}^{k+m} \|g\chi_{l} \|_{2}\\
&\leq  C \|f \|_{2}\sum_{l>0}^{2\min{(k,m)}} \|g\chi_{m+k-l} \|_{2}\\
&\leq  C \|f \|_{2}k^{\frac{1}{2}}\big(\sum_{l>0}^{2\min{(k,m)}} \|g\chi_{m+k-l} \|^{2}_{2}\big)^{\frac{1}{2}}.\\
\end{align*}
Thus
\begin{align*}
\|f*g\|_{2}^{2}&=\sum_{m>0}\|(f*g)\chi_{m}\|^{2}_{2}\\
&\leq C^{2}k\|f\|_{2} \sum_{m}\big(\sum_{l>0}^{2\min{(k,m)}} \|g\chi_{m+k-l} \|^{2}_{2}\big)\\
&\leq C^{2}k^{2} \|f \|_{2}^{2} \|g\|_{2}^{2}.
\end{align*}
Thus we obtain  for $f$ supported in $C_{k}$ and for all $g\in \ell^{2}(\Gamma)$ that
$$\sup _{\|g\|_{2}\leq 1}\|f *g \|\leq C k \|f\|_{2} ,$$ with some positive constant $C>0$.
\end{proof}
We use the equality on operators norms given by the amenability of the action on the boundary and we express the inequality of norm operators given in Proposition \ref{RD} in its dual form with the matrix coefficients associated with the boundary representation. 
We obtain:
\begin{prop}\label{RD}
Let $X$ be a complete simply connected Riemannian pinched negatively curved manifold and
let $\Gamma$ be a convex cocompact discrete group of isometries of $X$. Let $\nu$ be a $\Gamma$-quasi-invariant measure on $\partial X$ and consider $\pi_{\nu}$ its associated quasi-regular representation.
There exists $C>0$ such that for all unit verctors $\xi,\eta \in L^{2}(\partial X,\nu)$ we have
$$\sum_{\gamma \in C_{n}}|\langle \pi_{\nu}(\gamma)\xi,\eta \rangle |^{2}\leq Cn^{2}.$$
\end{prop}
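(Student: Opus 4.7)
The plan is to combine the two main ingredients already established: the operator norm estimate coming from property RD (the previous Proposition) and the amenability estimate (Proposition \ref{amenable}). The idea is a standard duality trick: one encodes the matrix coefficients themselves as a function on $\Gamma$, applies the operator through the quasi-regular representation to the given unit vectors, and then estimates the resulting operator norm by the regular representation.

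More concretely, given unit vectors $\xi,\eta \in L^{2}(\partial X,\nu)$, set
\[
f(\gamma):=\overline{\langle \pi_{\nu}(\gamma)\xi,\eta\rangle}\quad\text{for } \gamma\in\Gamma,
\]
and consider the finitely supported function $f\chi_{n}$. The first step is the identity
\[
S_{n}:=\sum_{\gamma\in C_{n}}|\langle \pi_{\nu}(\gamma)\xi,\eta\rangle|^{2}=\langle \pi_{\nu}(f\chi_{n})\xi,\eta\rangle,
\]
obtained by expanding $\pi_{\nu}(f\chi_{n})=\sum_{\gamma\in C_{n}}f(\gamma)\pi_{\nu}(\gamma)$. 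By Cauchy--Schwarz and the fact that $\xi,\eta$ are unit vectors, $S_{n}\leq \|\pi_{\nu}(f\chi_{n})\|$.

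The second step is to dominate the representation norm by the regular one. Since $\Gamma$ is a discrete group of isometries of $X$ with finite critical exponent (convex cocompactness in particular ensures this), Proposition \ref{amenable} yields
\[
\|\pi_{\nu}(f\chi_{n})\|\leq \|\lambda_{\Gamma}(f\chi_{n})\|.
\]
The third step is to invoke property RD in the form proved in the previous Proposition: since $f\chi_{n}$ is supported on the annulus $C_{n}$,
\[
\|\lambda_{\Gamma}(f\chi_{n})\|\leq Cn\,\|f\chi_{n}\|_{2}=Cn\,S_{n}^{1/2}.
\]

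Chaining the three inequalities gives $S_{n}\leq Cn\,S_{n}^{1/2}$, i.e. $S_{n}^{1/2}\leq Cn$, which is exactly the desired estimate $S_{n}\leq C^{2}n^{2}$. There is no real obstacle here once the previous two propositions are in hand; the only point to be careful about is the dual interpretation of $\|\lambda_{\Gamma}(f\chi_{n})\|$ via the function $f$ coding the matrix coefficients, which is the standard mechanism by which an $\ell^{1}$-to-operator norm bound on convolutions translates into $\ell^{2}$-decay of coefficients of a weakly contained representation.
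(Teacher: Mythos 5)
Your proof is correct and follows essentially the same route as the paper: encode the matrix coefficients over the annulus $C_{n}$ as a function $f\chi_{n}\in\ell^{2}(\Gamma)$, pair $\pi_{\nu}(f\chi_{n})$ against the same vectors to produce $S_{n}$, dominate by $\|\pi_{\nu}(f\chi_{n})\|\leq\|\lambda_{\Gamma}(f\chi_{n})\|\leq Cn\|f\chi_{n}\|_{2}=Cn\,S_{n}^{1/2}$, and divide. The only (cosmetic) difference is that the paper first reduces to nonnegative vectors $\xi,\eta$ (using that $\pi_{\nu}$ preserves the positive cone) so that $f$ is real and nonnegative, whereas you handle general vectors directly by inserting the complex conjugate into $f$; both are valid since the amenability and RD estimates apply to arbitrary $\ell^{1}$ (resp. $\ell^{2}$ supported on $C_{n}$) functions.
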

\begin{proof}
Observe that it is sufficient to prove the above inequality only for positive vectors $\xi,\eta$ in $L^{2}(\partial X, \nu)$.\\
Using Proposition \ref{amenable} and Proposition \ref{RD} we have for a positive function $f$ supported in the annulus $C_{n}$  $$\|\pi_{\nu}(f)\|=\|\lambda_{\Gamma}(f)\|\leq C n\|f\|_{2}.$$
Consider $$f(\cdot)=\chi_{C_{n}}(\cdot) \langle \pi_{\nu}(\cdot) \xi,\eta \rangle,$$
with $\xi $ and $\eta$ two nonzero unit positive vectors in $L^{2}$, and notice that $f$ is a positive function on $\Gamma$ supported on $C_{n}$. 
We have 
\begin{align*}
0\leq\sum_{\gamma \in C_{n}}\langle \pi_{\nu}(\gamma)\xi,\eta \rangle^{2}&= \langle \pi_{\nu}(f)\xi,\eta \rangle\\
&\leq \|\pi_{\nu}(f)\|\\
&\leq Cn\|f\|_{2}\\
&\leq Cn \big( \sum_{\gamma \in C_{n}} \langle \pi_{\nu}(\gamma) \xi,\eta \rangle^{2} \big)^{1/2}.
\end{align*}
Divide each term of the above inequality by $\big( \sum_{\gamma \in C_{n}} \langle \pi_{\nu}(\gamma) \xi,\eta \rangle^{2} \big)^{1/2}$ and take the square to obtain:
$$ \sum_{\gamma \in C_{n}}\langle \pi_{\nu}(\gamma)\xi,\eta \rangle^{2} \leq C^{2} n^{2}.$$

\end{proof}

\subsection{Uniform boundedness}
We shall consider the operator : 
\begin{equation}\label{operator}
T^{n}_{x}:=c_{F,\Gamma}{\rm e}^{-\sigma n} \sum_{\gamma \in C_{n}(x) }{\rm e}^{d^{F}(x,\gamma x)}\frac{\pi_{x}(\gamma)}{\phi_{x}(\gamma)}.
\end{equation}
with $c_{\Gamma,F}$ given in (\ref{constante}) and recall that
 $C_{n}(x)=\lbrace n-1\leq  d(x,\gamma x)<n \rbrace$. Observe that $T^{n}_{x}$ is nothing else than $$T^{n}_{x}=\mathcal{M}_{x}^{n}(\textbf{1}_{\overline{X}}), $$
where $\mathbf{1}_{\overline{X}}$ denotes the unit function on the compact set $\overline{X}$.

The Harish-Chandra estimates are fundamental to prove the uniform boundedness of the sequence of operators defined above. The potential function $F$ is always assumed to be symmetric.
\begin{prop}\label{uniform}
We have  $\sup_{n}\| T^{n}_{x}\|<+\infty.$
\end{prop}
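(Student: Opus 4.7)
The plan is to bound $\|T^n_x\|$ by duality: for arbitrary unit vectors $\xi,\eta\in L^{2}(\partial X,\nu^{F}_x)$, I would estimate $|\langle T^n_x\xi,\eta\rangle|$ via the Cauchy--Schwarz inequality applied to the sum defining $T^{n}_{x}$. Write
$$|\langle T^n_x\xi,\eta\rangle|\leq c_{\Gamma,F}\,{\rm e}^{-\sigma n}\Bigl(\sum_{\gamma\in C_n(x)}\frac{{\rm e}^{2d^F(x,\gamma x)}}{\phi_x(\gamma)^2}\Bigr)^{1/2}\Bigl(\sum_{\gamma\in C_n(x)}|\langle\pi_x(\gamma)\xi,\eta\rangle|^2\Bigr)^{1/2}.$$
The second factor is precisely controlled by the RD--amenability estimate of Proposition \ref{RD}, which gives the upper bound $Cn$. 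The whole game is then to show that the first factor is $O({\rm e}^{\sigma n}/n)$, so that after multiplication by $c_{\Gamma,F}{\rm e}^{-\sigma n}$ and by $Cn$ we obtain a quantity bounded uniformly in $n$.

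For the first factor I would use the lower bound from Harish-Chandra's estimate (Proposition \ref{HCH}), which for $n$ sufficiently large gives
$$\frac{{\rm e}^{2d^F(x,\gamma x)}}{\phi_x(\gamma)^2}\leq\frac{C^{2}}{d(x,\gamma x)^{2}}\,{\rm e}^{d^F(x,\gamma x)+\sigma d(x,\gamma x)}\leq\frac{C^{2}\,{\rm e}^{\sigma n}}{(n-1)^{2}}\,{\rm e}^{d^F(x,\gamma x)}$$
for every $\gamma\in C_n(x)$. It then remains to bound $\sum_{\gamma\in C_n(x)}{\rm e}^{d^F(x,\gamma x)}$ by a constant multiple of ${\rm e}^{\sigma n}$. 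This is exactly what the Paulin--Pollicott--Schapira equidistribution theorem (Theorem \ref{roblin}) yields when one pairs the operator-valued measure with the constant function $\mathbf{1}_{\overline{X}\times\overline{X}}$: the quantity $c_{\Gamma,F}{\rm e}^{-\sigma n}\sum_{\gamma\in C_n(x)}{\rm e}^{d^F(x,\gamma x)}$ converges to $\|\nu^F_x\|^{2}$, so it is bounded.

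Putting everything together,
$$|\langle T^n_x\xi,\eta\rangle|\leq c_{\Gamma,F}\,{\rm e}^{-\sigma n}\cdot\frac{C'\,{\rm e}^{\sigma n}}{n-1}\cdot Cn=\frac{C''\,n}{n-1},$$
which is uniformly bounded for $n\geq 2$. For the finitely many remaining values of $n$ the operators $T^n_x$ are finite sums of bounded operators, hence trivially bounded. Taking the supremum over unit vectors $\xi,\eta$ yields $\sup_n\|T^n_x\|<+\infty$.

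The crucial feature of this argument, and the place where each ingredient is sharp, is the cancellation between the polynomial factor $n$ produced by property RD (via Jolissaint's sharp norm inequality for annular convolutions) and the polynomial factor $d(x,\gamma x)$ appearing in the \emph{lower} bound of the Harish-Chandra function. In other words, the Harish-Chandra estimate plays here the role of an Anker-type refinement: a merely exponential lower bound would produce a linear blow-up in $n$ and the proof would fail. There is no real technical obstacle beyond assembling the three tools (Harish-Chandra, RD$+$amenability, equidistribution) correctly and noticing this cancellation.
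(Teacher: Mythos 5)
Your proof is correct, but it takes a genuinely different route from the paper's. You apply Cauchy--Schwarz directly to the sum defining $T^n_x$, which splits the problem into the RD bound $\sum_{\gamma\in C_n(x)}|\langle\pi_x(\gamma)\xi,\eta\rangle|^2\le Cn^2$ and a counting estimate $\sum_{\gamma\in C_n(x)}{\rm e}^{d^F(x,\gamma x)}=O({\rm e}^{\sigma n})$, the latter obtained from the Paulin--Pollicott--Schapira equidistribution theorem. The paper's proof uses the RD bound differently and avoids the counting estimate altogether: starting from $\sum_{\gamma\in C_n(x)}|\langle\pi_\nu(\gamma)\xi,\eta\rangle\langle\pi_\nu(\gamma)\xi',\eta'\rangle|\le Cn^2$ for arbitrary unit vectors, it multiplies and divides by $\phi_x(\gamma)^2$, applies the lower Harish-Chandra estimate to $\phi_x(\gamma)^2$, and only then specializes $\xi'=\eta'=\mathbf{1}_{\partial X}$. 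Since $\langle\pi_x(\gamma)\mathbf{1}_{\partial X},\mathbf{1}_{\partial X}\rangle=\phi_x(\gamma)$, one factor of $\phi_x(\gamma)$ cancels automatically, and what survives is exactly the coefficient ${\rm e}^{d^F(x,\gamma x)}/\phi_x(\gamma)$ appearing in $T^n_x$, with no sum of the form $\sum{\rm e}^{d^F}$ left over to control. The practical difference is that the paper's Step~1 uses only property RD, the amenability of the boundary action, and the shadow lemma --- no mixing --- whereas your argument brings the equidistribution theorem (hence mixing) into Step~1 as well. This is harmless under the standing hypotheses of Theorem~A, but it is worth being aware that the uniform bound is obtainable more economically; and if you do want to keep the route you chose, the counting bound $\sum_{\gamma\in C_n(x)}{\rm e}^{d^F(x,\gamma x)}\le C{\rm e}^{\sigma n}$ can in fact be extracted from Mohsen's shadow lemma alone for convex cocompact groups, sparing the appeal to mixing. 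Your closing observation about the cancellation between the polynomial $n$ from RD and the polynomial $d(x,\gamma x)$ in the lower Harish-Chandra bound is exactly right and is the same mechanism driving the paper's computation.
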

\begin{proof}
Pick $x\in X$, let $\nu:=\nu^{F}_{x}$ be a Patterson density of $(\Gamma,F)$ of dimension $\sigma$ and consider $\pi_{\nu}$ the quasi-regular representation associated. Then Proposition \ref{RD} implies for all unit vectors $\xi,\eta \in L^{2}(\partial X,\nu)$ we have $$ \sum_{\gamma \in C_{n}(x)} |\langle \pi_{\nu}(\gamma) \xi,\eta\rangle |^{2}\leq Cn^{2}.
 $$

Observe that Cauchy-Schwarz inequality implies that for all unit vectors $\xi,\eta,\xi^{'},\eta^{'}\in L^{2}(\partial X,\nu)$ we have
$$
 \sum_{\gamma \in C_{n}(x)} |\langle \pi_{\nu}(\gamma) \xi,\eta\rangle \langle \pi_{\nu}(\gamma) \xi',\eta'\rangle|\leq Cn^{2}.
 $$

Therefore for all unit vectors $ \xi,\eta, \xi',\eta'$ we have
\begin{align*}
Cn^{2}\geq &\sum_{\gamma \in C_{n}(x)} |\langle \pi_{\nu}(\gamma) \xi,\eta\rangle \langle \pi_{\nu}(\gamma) \xi',\eta'\rangle|\\
&=\sum_{\gamma \in C_{n}(x)} \frac{|\langle \pi_{\nu}(\gamma) \xi,\eta\rangle \langle \pi_{\nu}(\gamma) \xi',\eta'\rangle|}{\phi_{x}^{2}(\gamma)}\phi_{x}^{2}(\gamma)\\
\mbox{Proposition \ref{HCH}} &\geq C'\sum_{\gamma \in C_{n}(x)} d(x,\gamma x){\rm e}^{d^{F}(x,\gamma x)}{\rm e}^{-\sigma |\gamma|}\frac{|\langle \pi_{\nu}(\gamma) \xi,\eta\rangle \langle \pi_{\nu}(\gamma) \xi',\eta'\rangle|}{\phi_{x}^{2}(\gamma)}\\
&\geq n^{2}C' {\rm e}^{-\sigma n}\sum_{\gamma \in C_{n}(x)} {\rm e}^{d^{F}(x,\gamma x)}\frac{|\langle \pi_{\nu}(\gamma) \xi,\eta\rangle \langle \pi_{\nu}(\gamma) \xi',\eta'\rangle|}{\phi_{x}^{2}(\gamma)}\\
&\geq \frac{C'}{c_{\Gamma,F}}n^{2}\bigg( c_{\Gamma,F}{\rm e}^{-\sigma n}\sum_{\gamma \in C_{n}(x)} {\rm e}^{d^{F}(x,\gamma x)}\frac{|\langle \pi_{\nu}(\gamma) \xi,\eta\rangle \langle \pi_{\nu}(\gamma) \xi',\eta'\rangle|}{\phi_{x}^{2}(\gamma)}\bigg)
\end{align*}

Applying the above inequality for $\xi'=\eta'=1$ we obtain for all unit vectors $\xi$ and $\eta$ in $L^{2}(\partial X,\nu)$ :
$$Cn^{2}\geq  \frac{C'}{c_{\Gamma,F}}n^{2} |\langle T^{n}_{x}\xi ,\eta \rangle |.$$
Hence $$\sup_{n}\|T^{n}_{x}\|<\infty, $$
and the proof is done.

\end{proof}
\begin{remark}
Notice that Bader and Muchnik in \cite{BM} use a different method to prove uniform boundedness of the sequence of operators. Our method combining the property RD with the equality concerning the spectral radius gives another short proof of the uniform boundedness when the quasi-invariant measure is the Patterson-Sullivan measure class. \end{remark}

\begin{remark}
Notice also that this uniform bound for the Patterson-Sullivan measure class gives a sharp estimate of the spectral gap of $\lambda_{\Gamma}(\mu_{n})$ where $\mu_{n}$ is the probability measure on the groups supported over an annulus $C_{n}$ $$\mu_{n}=\frac{1}{|C_{n}|}\chi_{C_{n}} .$$ More specifically we obtain $$C^{-1}n {\rm e}^{{-\frac{1}{2}\sigma_{\Gamma}n}}\leq\| \lambda(\mu_{n})\|\leq C n {\rm e}^{{-\frac{1}{2}\sigma_{\Gamma}n}},$$ for some positive constant $C>0$ and where $\sigma_{\Gamma}$ is the usual critical exponent in the Patterson-Sullivan theory, with a potential $F=0$.

\end{remark}

\section{Analysis of matrix coefficients}\label{section5}
\subsection{Notation} Let $\Gamma$ be a discrete group of isometries of $X$ and let $\nu^{F}$ be a Patterson density of dimension $\sigma.$ Let $(d_{x})_{x\in X}$ be a family of visual metrics.\\
Let $U$ be a subset of $\partial X$ and $a>0$ be a positive real number and define $U_{x}(a)$ the subset of $\partial X$ as  
\begin{equation}\label{thick}
U_{x}(a)=\lbrace  v | \inf_{w\in A}d_{x}(v,w) <{\rm e}^{-a}\rbrace .
\end{equation}
We will write $U(a)$ instead of $U_{x}(a)$ once $x$ has been fixed. Recall that $\cap_{a>0}U(a)=\overline{U}$.
\\

In order to have Harish-Chandra's estimates available we pick $x\in CH(\Lambda_{\Gamma})$ for the rest of this section.

\begin{lemma}\label{fonda}
 Let $a>0$ be a positive real number, let $\gamma$ be in $\Gamma$ and let $w_{x}^{\gamma x} \in \partial X$. Consider the ball $\partial X$ defined as  $B_{a}=B(w_{x}^{\gamma x}, {\rm e}^{-a})$ and let $U$ be a Borel subset of $\partial X \backslash B_{a}$. There exists $C_{a}$ such that we have $$\frac{\langle \pi_{x}(\gamma)\mathbf{1}_{\partial X},\chi_{U}\rangle }{\phi_{x}(\gamma)} \leq \frac{C_{a}}{d(\gamma x,x)}\cdot$$
\end{lemma}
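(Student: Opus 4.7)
The plan is to expand the matrix coefficient as an integral against $\nu_x^F$ and use the annular decomposition of $\partial X$ from Section \ref{section3} together with the Harish-Chandra lower bound.

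First I would write
\[
\langle \pi_x(\gamma)\mathbf{1}_{\partial X},\chi_U\rangle
=\int_U {\rm e}^{{\frac{1}{2}C^{F-\sigma}_v(x,\gamma x)}}\,d\nu^F_x(v),
\]
and use Proposition \ref{HCH} to get
\[
\phi_x(\gamma)\geq C^{-1} d(x,\gamma x)\,{\rm e}^{{\frac{1}{2}d^F(x,\gamma x)-\frac{1}{2}\sigma_{\Gamma,F}d(x,\gamma x)}}.
\]
So it suffices to bound the numerator by $C_a\,{\rm e}^{{\frac{1}{2}d^F(x,\gamma x)-\frac{1}{2}\sigma d(x,\gamma x)}}$.

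Next, recall the decomposition $\partial X=\bigsqcup_{i=1}^N A_{i,R}(\gamma)\sqcup O_R(x,\gamma x)$ attached to the finite chain of points $z_0=\gamma x,z_1,\dots,z_N$ along the ray $[x,w_x^{\gamma x})$, spaced at distance $R/2$. Since each $z_i$ lies on $[x,w_x^{\gamma x})$, we have $w_x^{z_i}=w_x^{\gamma x}$, and Lemma \ref{lemme22} gives $O_R(x,z_i)\subset B(w_x^{\gamma x},C\,{\rm e}^{-d(x,z_i)})$ and similarly for $O_R(x,\gamma x)$. The key observation is geometric: if $v\in U$ then $d_x(v,w_x^{\gamma x})\geq {\rm e}^{-a}$, so $v$ cannot lie in any shadow $O_R(x,z_i)$ with $C\,{\rm e}^{-d(x,z_i)}<{\rm e}^{-a}$, i.e.\ with $d(x,z_i)>a+\log C$. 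Since $d(x,z_i)=d(x,z_N)+(N-i)R/2\geq (N-i)R/2$, only the indices $i$ with $N-i\leq 2(a+\log C)/R$ can contribute to the integral over $U$, i.e.\ at most $N_a:=\lfloor 2(a+\log C)/R\rfloor+1$ pieces $A_{i,R}(\gamma)$, plus possibly $O_R(x,\gamma x)$ itself when $d(x,\gamma x)\leq a+\log C$.

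Then I would invoke Remark \ref{Aiestim}: for each $i$,
\[
\int_{A_{i,R}(\gamma)}{\rm e}^{{\frac{1}{2}C^{F-\sigma}_v(x,\gamma x)}}\,d\nu^F_x(v)\leq C\,{\rm e}^{{\frac{1}{2}d^F(x,\gamma x)-\frac{1}{2}\sigma d(x,\gamma x)}},
\]
and the analogous estimate holds over $O_R(x,\gamma x)$ by Lemma \ref{fondamental}(2) combined with Lemma \ref{moshen}, exactly as in the first case treated in the proof of Proposition \ref{HCH}. Summing the at most $N_a+1$ contributing pieces gives
\[
\int_U {\rm e}^{{\frac{1}{2}C^{F-\sigma}_v(x,\gamma x)}}\,d\nu^F_x(v)\leq (N_a+1)C\,{\rm e}^{{\frac{1}{2}d^F(x,\gamma x)-\frac{1}{2}\sigma d(x,\gamma x)}},
\]
and dividing by the Harish-Chandra lower bound yields the desired $C_a/d(x,\gamma x)$ bound, after possibly enlarging $C_a$ to absorb the finitely many $\gamma$ with $d(x,\gamma x)$ smaller than a fixed threshold depending on $a$ (where both numerator and denominator are bounded away from $0$ and $\infty$).

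The main technical point is the geometric step of bounding the number of annular pieces $A_{i,R}(\gamma)$ that can meet $U$: everything else is an application of estimates already established. The observation $w_x^{z_i}=w_x^{\gamma x}$ is what makes this precise, since it lets us translate the \emph{a priori} lower bound ${\rm e}^{-a}$ on $d_x(v,w_x^{\gamma x})$ directly into an upper bound on $d(x,z_i)$ via the shadow-to-ball comparison of Lemma \ref{lemme22}.
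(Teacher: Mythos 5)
Your proof is correct and follows essentially the same route as the paper's: the same annular decomposition from Section \ref{section3}, the same counting of the (bounded number of) annuli $A_{i,R}(\gamma)$ that can meet $\partial X\setminus B_a$ via the shadow-to-ball inclusion of Lemma \ref{lemme22}, the same application of Remark \ref{Aiestim} to each contributing annulus, and the same division by the Harish-Chandra lower bound. You are in fact slightly more careful than the paper at two points: you explicitly account for the shadow piece $O_R(x,\gamma x)$ (observing it only contributes when $d(x,\gamma x)\leq a+\log C$), and you note that the finitely many $\gamma$ with small displacement must be absorbed into $C_a$ since Proposition \ref{HCH} requires $d(x,\gamma x)\geq R$; the paper leaves both of these implicit.
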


\begin{proof}

Define the following sets of indices 
 $$I=\{i \mbox{ such that } A_{i,R}(\gamma)\cap \partial X \backslash B_{a} \neq \varnothing \},$$
 and $$J=  \{i \mbox{ such that } {\rm e}^{-d(x,z_{i-1})}\geq {\rm e}^{-a}\}.$$ 
If $i$ is in $I$, then $A_{i,R}(\gamma)$ is not included in $B_{a}$. Since $A_{i,R}(\gamma) \subset B(w^{\gamma x}_{x},C{\rm e}^{-d(x,z_{i-1})})$ then $B(w^{\gamma x}_{x},C{\rm e}^{-d(x,z_{i-1})})$ cannot be included in $B_{a}$ where $C={\rm e}^{2\delta+R}$ (see Lemma \ref{lemme22}). This means that $i$ satisfies $C{\rm e}^{-d(x,z_{i-1})}\geq {\rm e}^{-a}$. 

There is only a finite number of $i$ such that $d(x,z_{i-1})\leq a+\log (C)=a+2\delta+R$. Hence by denoting $N_{a}:=|J|$ the cardinal of $J$, we obtain $|I|= |\{i \mbox{ such that } A_{i,R}(\gamma)\cap \partial X \backslash B_{\epsilon} \neq \varnothing \}|\leq |J|= N_{a}$.

Since $U$ is in $\partial X \backslash B_{a}$ we have:
\begin{align*}
\langle \pi(\gamma)\mathbf{1}_{\partial X}, \chi_{U}\rangle&\leq \sum_{i=1}^{N}\int_{A_{i,R}(\gamma)\cap \partial X \backslash B_{\epsilon}}{\rm e}^{{C_{F-\sigma,v}(x,\gamma x)}} d\nu_{x}^{F}(v)\\
&\leq \sum_{i\in I}\int_{A_{i,R}(\gamma)\cap \partial X \backslash B_{\epsilon}}{\rm e}^{{C_{F-\sigma,v}(x,\gamma x)}} d\nu_{x}^{F}(v)\\
 \mbox{Remark \ref{Aiestim}}&\leq C N_{a} {\rm e}^{{\frac{\sigma}{2}(d^{F}(x,\gamma x)-d(x,\gamma x))}}\\
\mbox{Left hand side inequality of Proposition \ref{HCH}}&\leq \frac{C_{a}}{d(x,\gamma x)}\phi_{x}(\gamma).
\end{align*}

\end{proof}
It turns out that the following results are very close to the results of \cite[Section 5]{BM}. We shall indicate all the minor modifications that we need to do to achieve Step 2. 

Recall the notation of a cone of basis $A\subset \partial X$ of vertex $x$ in $X$: $$\mathcal{C}_{x}(A).$$

\begin{prop}\label{deform}
Pick $x\in CH(\Lambda_{\Gamma})$ and let $\mu_{n}\in \ell^{1}(\Gamma)$ such that $$\sup_{n}\|\mu_{n}\|_{\ell^{1}}<+\infty,$$ and which satisfies $$\lim_{n\rightarrow +\infty}\mu_{n}(\gamma)=0,$$ for all $\gamma \in \Gamma$. Then for every Borel subset $U\subset  \partial X$ we have for all $a>0$ $$\limsup_{n\rightarrow+\infty} \sum_{\gamma \in \Gamma}\mu_{n}(\gamma)\frac{\langle \pi_{x}(\gamma)\mathbf{1},\chi_{U}\rangle}{\phi_{x}(\gamma)} \leq \limsup_{n \rightarrow+\infty} \sum_{\gamma \in \Gamma}\mu_{n}(\gamma)D_{\gamma  x}  ( \chi_{\mathcal{C}_{x}(U(a))}).$$
\end{prop}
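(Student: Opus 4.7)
My plan is to split the sum over $\gamma$ according to whether the geodesic endpoint $w_x^{\gamma x}$ lies in the thickening $U(a)$ or not, using Lemma~\ref{fonda} to show that the ``outside'' contribution is negligible and bounding the ``inside'' contribution directly by the right-hand side.

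More precisely, I would partition $\Gamma = \Gamma^{\mathrm{in}} \sqcup \Gamma^{\mathrm{out}}$ with
$$
\Gamma^{\mathrm{in}} := \{\gamma \in \Gamma : w_x^{\gamma x} \in U(a)\}.
$$
For $\gamma \in \Gamma^{\mathrm{in}}$, the point $\gamma x$ lies on the geodesic $[x,w_x^{\gamma x})$ whose endpoint belongs to $U(a)$, hence $\gamma x \in \mathcal{C}_x(U(a))$ and $D_{\gamma x}(\chi_{\mathcal{C}_x(U(a))})=1$. Since $\pi_x(\gamma)\mathbf{1}$ is pointwise non-negative (it equals ${\rm e}^{\frac{1}{2}C^{F-\sigma}_v(x,\gamma x)}$) and $\chi_U \le \mathbf{1}$, we have $\langle \pi_x(\gamma)\mathbf{1},\chi_U\rangle \le \phi_x(\gamma)$, which yields
$$
\sum_{\gamma \in \Gamma^{\mathrm{in}}} \mu_n(\gamma)\,\frac{\langle \pi_x(\gamma)\mathbf{1},\chi_U\rangle}{\phi_x(\gamma)} \le \sum_{\gamma \in \Gamma^{\mathrm{in}}} \mu_n(\gamma) \le \sum_{\gamma\in\Gamma} \mu_n(\gamma)\, D_{\gamma x}(\chi_{\mathcal{C}_x(U(a))}),
$$
which already has the desired form.

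For $\gamma \in \Gamma^{\mathrm{out}}$ the definition of $U(a)$ forces $U \subset \partial X \setminus B(w_x^{\gamma x},e^{-a})$, so Lemma~\ref{fonda} applies and gives $\langle \pi_x(\gamma)\mathbf{1},\chi_U\rangle/\phi_x(\gamma) \le C_a/d(x,\gamma x)$. It remains to show that
$$
C_a \sum_{\gamma \in \Gamma^{\mathrm{out}}} \frac{\mu_n(\gamma)}{d(x,\gamma x)} \longrightarrow 0 \quad \text{as } n \to +\infty.
$$
Given $\varepsilon>0$, setting $M := \sup_n \|\mu_n\|_{\ell^1}$ and choosing $R_0$ with $C_a M/R_0 < \varepsilon/2$, I would decompose $\Gamma^{\mathrm{out}}$ into $\{\gamma : d(x,\gamma x) \le R_0\}$, a finite set by discreteness of $\Gamma$ on which the pointwise hypothesis $\mu_n(\gamma)\to 0$ makes the sum eventually smaller than $\varepsilon/2$, and its complement, where the pointwise bound $C_a/R_0$ and the uniform $\ell^1$ bound $M$ control the remainder. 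Finitely many exceptional $\gamma$ with $d(x,\gamma x)$ below the threshold needed for Proposition~\ref{HCH} are absorbed in the first piece.

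I do not expect any serious obstacle: the geometric ingredient, namely the bound on the number of annuli $A_{i,R}(\gamma)$ that can meet $U$, is already encapsulated in Lemma~\ref{fonda}, and the rest is an elementary $\varepsilon$-argument matching pointwise decay of the $\mu_n(\gamma)$ against their uniform $\ell^1$ mass. The only step worth stating carefully is the identification $\gamma x \in \mathcal{C}_x(U(a))$ for $\gamma \in \Gamma^{\mathrm{in}}$, which is immediate from the definition of the cone $\mathcal{C}_x(\cdot)$.
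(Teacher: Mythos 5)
Your proof is correct and matches the paper's approach, merely reorganized. The paper partitions $\Gamma$ into three pieces at once — a ball $\Gamma_1=\{\gamma:d(x,\gamma x)\leq N_0\}$, an ``inside'' piece $\Gamma_2$, and an ``outside'' piece $\Gamma_3$ — and then refers the $\varepsilon$-bookkeeping to Bader--Muchnik and to the prior CAT$(-1)$ paper; you instead split into $\Gamma^{\mathrm{in}}\sqcup\Gamma^{\mathrm{out}}$ and absorb the ball into the outside term. The key ingredients are identical: the observation that $\gamma x\in\mathcal{C}_x(U(a))$ iff $w_x^{\gamma x}\in U(a)$, the crude bound $\langle\pi_x(\gamma)\mathbf{1},\chi_U\rangle\leq\phi_x(\gamma)$ (which relies on positivity of the integrand, so you are implicitly using $\mu_n\geq 0$, which the paper also assumes tacitly), Lemma \ref{fonda} to get the $C_a/d(x,\gamma x)$ decay off $U(a)$, and then pitting pointwise vanishing of $\mu_n$ against the uniform $\ell^1$ bound. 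Since your version actually spells out the routine estimate that the paper delegates to external references, it is, if anything, a slightly more complete account of the same argument.
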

\begin{proof}
Let $U$ be a Borel subset of $\partial X$ and let $a$ be a positive number and consider $U(a)$ (see Definition (\ref{thick})). Let $N_{0}$ be nonnegative integer. Consider the following partition of $\Gamma$: $$\Gamma=\Gamma_{1}\sqcup \Gamma_{2}\sqcup  \Gamma_{2}$$ with $$\Gamma_{1}=\lbrace\gamma \in \Gamma | d(x,\gamma  x)\leq N_{0} \rbrace$$ and  $$\Gamma_{2}=\lbrace \gamma \in \Gamma |\gamma x \in \mathcal{C}_{x}( U(a)) \rbrace \cap \Gamma_{1}^{c}$$ and  $$\Gamma_{3}=\lbrace \gamma \in \Gamma | \gamma x\notin \mathcal{C}_{x} (U(a)) \rbrace \cap \Gamma_{1}^{c}.$$   

Note that $ \gamma x\notin \mathcal{C}_{x} (U(a))$ is equivalent to $w^{\gamma x}_{x} \notin U(a)$. Therefore $U\cap B(w^{\gamma x}_{x},{\rm e}^{-a})=\varnothing$ so that Lemma \ref{fonda} is available. 
The proof follows now exactly the proof of \cite[Proposition 5.1]{BM} and \cite[Proposition 5.1]{Boy}.
\end{proof}

\subsection{Application of Paulin-Pollicott-Schapira's equidistribution Theorem}
The purpose of this section is to use Corollary \ref{coroPaulin} for computing the limit of the sequence of operator-valued measures $(\mathcal{M}_{x}^{n})_{n \in \mathbb{N}^{*}}$.\\
 We assume here that $\nu^{F}$ is a Patterson density of dimension $\sigma_{\Gamma,F}$ and that the Gibbs measure is mixing with respect to the geodesic flow. We shall prove:

\begin{prop}\label{applicationroblin}
Let $U,V,W \subset \partial {X}$ be Borel subsets such that $\nu^{F}_{x}(\partial U)=\nu^{F}_{x}(\partial V)=\nu^{F}_{x}(\partial W)=0$. Then we have:
$$\lim_{n\rightarrow +\infty} \langle \mathcal{M}^{n}_{x}(\chi_{\mathcal{C}_{x}(U)})\chi_{V},\chi_{W} \rangle=\nu^{F}_{x}(U \cap W)\nu^{F}_{x}(V).$$

\end{prop}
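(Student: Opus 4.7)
The plan is to expand the inner product using (\ref{suitesmesures}):
\[
\langle \mathcal{M}^n_x(\chi_{\mathcal{C}_x(U)})\chi_V, \chi_W\rangle = c_{\Gamma,F} e^{-\sigma_{\Gamma,F} n} \sum_{\gamma \in C_n(x)} e^{d^F(x,\gamma x)} \chi_U(w_x^{\gamma x}) \frac{\langle \pi_x(\gamma)\chi_V, \chi_W\rangle}{\phi_x(\gamma)},
\]
using that $D_{\gamma x}(\chi_{\mathcal{C}_x(U)}) = \chi_U(w_x^{\gamma x})$, and then to bound the normalized matrix coefficient by a product of characteristic functions at the two boundary endpoints $w_x^{\gamma x}$ and $w_x^{\gamma^{-1}x}$, so that Corollary \ref{coroPaulin} applied to the resulting double sum yields the claimed product $\nu_x^F(U \cap W)\nu_x^F(V)$.

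Turning to the upper bound, the nonnegativity of the kernel of $\pi_x(\gamma)$ together with $0 \leq \chi_V, \chi_W \leq \mathbf{1}$ and the unitarity identity $\langle \pi_x(\gamma)\chi_V, \chi_W\rangle = \langle \pi_x(\gamma^{-1})\chi_W, \chi_V\rangle$ (for real-valued functions) yields
\[
\frac{\langle \pi_x(\gamma)\chi_V, \chi_W\rangle}{\phi_x(\gamma)} \leq \min\!\left\{\frac{\langle \pi_x(\gamma)\mathbf{1}, \chi_W\rangle}{\phi_x(\gamma)},\ \frac{\langle \pi_x(\gamma^{-1})\mathbf{1}, \chi_V\rangle}{\phi_x(\gamma^{-1})}\right\},
\]
where the equality $\phi_x(\gamma) = \phi_x(\gamma^{-1})$ holds because $\widetilde{F}$ is symmetric. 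Lemma \ref{fonda} controls each factor: if $\gamma x \notin \mathcal{C}_x(W(a))$ then the first quantity is at most $C_a/d(x,\gamma x)$, and symmetrically the second is at most $C_a/d(x,\gamma^{-1}x) = C_a/d(x,\gamma x)$ whenever $\gamma^{-1}x \notin \mathcal{C}_x(V(a))$. Since the minimum of two $0/1$-valued quantities coincides with their product, we obtain the pointwise bound
\[
\frac{\langle \pi_x(\gamma)\chi_V, \chi_W\rangle}{\phi_x(\gamma)} \leq \chi_{\mathcal{C}_x(W(a))}(\gamma x)\,\chi_{\mathcal{C}_x(V(a))}(\gamma^{-1}x) + \frac{C_a}{d(x,\gamma x)}.
\]
Inserting this into the sum, the main term is exactly $c_{\Gamma,F}e^{-\sigma_{\Gamma,F}n}\sum_{\gamma\in C_n(x)}e^{d^F(x,\gamma x)}(D_{\gamma x}\otimes D_{\gamma^{-1}x})(\chi_{\mathcal{C}_x(U\cap W(a))}\otimes \chi_{\mathcal{C}_x(V(a))})$, whose $\limsup$ Corollary \ref{coroPaulin} bounds by $\nu_x^F(\overline{U\cap W(a)})\nu_x^F(\overline{V(a)})$; the error term contributes $O(C_a/n)$ times the total mass $c_{\Gamma,F}e^{-\sigma n}\sum_{C_n(x)}e^{d^F}$, which stays bounded by Theorem \ref{roblin} and hence vanishes. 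Letting $a \to \infty$ and using the hypothesis $\nu_x^F(\partial U)=\nu_x^F(\partial V)=\nu_x^F(\partial W)=0$ together with $\bigcap_a W(a) = \overline{W}$ (and similarly for $V$) yields $\limsup_n \langle \mathcal{M}^n_x(\chi_{\mathcal{C}_x(U)})\chi_V,\chi_W\rangle \leq \nu_x^F(U\cap W)\nu_x^F(V)$.

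For the matching lower bound, I would invoke a total-mass argument: for any finite Borel partitions $\partial X = \bigsqcup_k V_k = \bigsqcup_l W_l$ with $\nu_x^F$-null boundary pieces,
\[
\sum_{k,l}\langle \mathcal{M}^n_x(\chi_{\mathcal{C}_x(U)})\chi_{V_k}, \chi_{W_l}\rangle = \langle \mathcal{M}^n_x(\chi_{\mathcal{C}_x(U)})\mathbf{1},\mathbf{1}\rangle = c_{\Gamma,F}e^{-\sigma_{\Gamma,F}n}\sum_{\gamma\in C_n(x)}e^{d^F(x,\gamma x)}\chi_U(w_x^{\gamma x}),
\]
which converges to $\nu_x^F(U)\|\nu_x^F\|$ directly by Theorem \ref{roblin} and equals the sum $\sum_{k,l}\nu_x^F(U\cap W_l)\nu_x^F(V_k)$ of the termwise upper bounds. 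Since each inner product is nonnegative and its $\limsup$ is dominated by the corresponding upper bound while the total $\limsup$ must saturate, each termwise $\limsup$ equals its upper bound; applying this to the complementary pair $(V,\partial X\setminus V)$ (together with the identity $\chi_V + \chi_{\partial X \setminus V} = \mathbf{1}$) then forces $\liminf = \limsup$ and produces the claimed limit.

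The main obstacle is the both-sided symmetric control of the matrix coefficient: the argument combines two instances of Lemma \ref{fonda}, one for the forward endpoint $w_x^{\gamma x}$ (through the $\chi_W$ factor) and one for the backward endpoint $w_x^{\gamma^{-1}x}$ (through the $\chi_V$ factor, after unitarity), and it relies crucially on the symmetry of $\widetilde{F}$---which provides $\phi_x(\gamma) = \phi_x(\gamma^{-1})$, $d^F(x,\gamma x) = d^F(x,\gamma^{-1}x)$, and the invariance of $C_n(x)$ under $\gamma \mapsto \gamma^{-1}$---in order to repackage the pointwise bound as an expression involving $D_{\gamma x} \otimes D_{\gamma^{-1}x}$ directly amenable to Corollary \ref{coroPaulin}.
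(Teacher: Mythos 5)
Your proof is correct, and it takes a genuinely different route from the paper's. The paper derives the upper bound $\limsup_n \langle \mathcal{M}^n_x(\chi_{\mathcal{C}_x(U)})\chi_V,\chi_W\rangle \leq \nu_x^F(U\cap W)\nu_x^F(V)$ via a chain of three preparatory lemmas (Lemmas \ref{lemprepar1}, \ref{lemprepar2}, \ref{lemineg}) built on Proposition \ref{deform}: it first splits $U = (U\cap W(b))\sqcup (U\cap W(b)^c)$, kills the second piece via Lemma \ref{lemprepar1} ($\limsup = 0$ when the cone misses $W$), and treats the first piece via Lemma \ref{lemprepar2} (after passing to the adjoint to process the $\chi_V$ side). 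You instead produce a single pointwise inequality
\[
\frac{\langle \pi_x(\gamma)\chi_V,\chi_W\rangle}{\phi_x(\gamma)} \leq \chi_{\mathcal{C}_x(W(a))}(\gamma x)\,\chi_{\mathcal{C}_x(V(a))}(\gamma^{-1}x)+\frac{C_a}{d(x,\gamma x)},
\]
obtained by taking the minimum of the forward-endpoint control (Lemma \ref{fonda} with $\chi_W$) and the backward-endpoint control (Lemma \ref{fonda} applied to $\gamma^{-1}$ with $\chi_V$, after the adjoint identity $\langle\pi_x(\gamma)\chi_V,\chi_W\rangle = \langle\pi_x(\gamma^{-1})\chi_W,\chi_V\rangle$), together with the trivial bound $\leq 1$ when both cone conditions hold. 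This packages the main term directly in the form $(D_{\gamma x}\otimes D_{\gamma^{-1}x})(\chi_{\mathcal{C}_x(U\cap W(a))}\otimes\chi_{\mathcal{C}_x(V(a))})$ to which Corollary \ref{coroPaulin} applies, and the error term is $O(C_a/n)$ times a bounded sequence. Your total-mass squeeze for $\liminf = \limsup$ is essentially identical to the paper's (the paper partitions $U$, $V$, $W$ simultaneously starting from $\langle\mathcal{M}^n_x(\mathbf{1}_{\overline{X}})\mathbf{1},\mathbf{1}\rangle$, you keep $U$ fixed). Your route is shorter and more symmetric, handling both boundary endpoints at once rather than one at a time. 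Two small inaccuracies: $\phi_x(\gamma)=\phi_x(\gamma^{-1})$ follows from unitarity alone (not from the symmetry of $\widetilde F$; the symmetry of $\widetilde F$ is what gives $d^F(x,\gamma x)=d^F(x,\gamma^{-1}x)$, which you list but don't actually need here); and your invocation of Theorem \ref{roblin} to get $\langle\mathcal{M}^n_x(\chi_{\mathcal{C}_x(U)})\mathbf{1},\mathbf{1}\rangle\to\nu_x^F(U)\|\nu_x^F\|$ requires a Portmanteau step since $\chi_{\mathcal{C}_x(U)}$ is not continuous, justified by $\nu_x^F(\partial U)=0$ (the paper sidesteps this by taking $\mathbf{1}_{\overline X}$ in its squeeze). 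Neither affects the validity of the argument.
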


We need some lemmas to prepare the proof of this proposition.

\begin{lemma}\label{lemprepar1}
Let $U$ be a Borel subset of $\partial X$ with $\nu^{F}_{x}(\partial U)=0$ and let $W$ be a Borel subset of $\partial X$ such that $\nu^{F}_{x}(\partial W)=0$, satisfying $U\cap W=\varnothing$. Then we have $$\limsup_{n \rightarrow +\infty} \langle \mathcal{M} ^{n}_{x,\rho} (\chi_{\mathcal{C}_{x}(U)} ) \mathbf{1}_{\partial X} , \chi_{W} \rangle =0.$$
\end{lemma}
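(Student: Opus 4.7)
The plan is to apply Proposition~\ref{deform} in order to reduce the claim to an equidistribution statement that can then be estimated by Corollary~\ref{coroPaulin}, and finally to exploit the topological smallness of $\overline{U}\cap\overline{W}$ coming from the disjointness hypothesis $U\cap W=\varnothing$.

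First I would set
$$\mu_n(\gamma):=c_{\Gamma,F}\,{\rm e}^{-\sigma_{\Gamma,F} n}\,{\rm e}^{d^F(x,\gamma x)}\,\chi_{C_n(x)}(\gamma)\,\chi_{\mathcal{C}_x(U)}(\gamma x),$$
so that unfolding the definition of $\mathcal{M}^{n}_x$ gives
$$\langle\mathcal{M}^n_x(\chi_{\mathcal{C}_x(U)})\mathbf{1}_{\partial X},\chi_W\rangle=\sum_{\gamma\in\Gamma}\mu_n(\gamma)\,\frac{\langle\pi_x(\gamma)\mathbf{1}_{\partial X},\chi_W\rangle}{\phi_x(\gamma)}.$$
The hypotheses of Proposition~\ref{deform} are then immediate: the $\ell^1$-norm of $\mu_n$ is dominated by $c_{\Gamma,F}{\rm e}^{-\sigma_{\Gamma,F} n}\sum_{\gamma\in C_n(x)}{\rm e}^{d^F(x,\gamma x)}$, which tends to the finite value $\|\nu^F_x\|^2$ by Theorem~\ref{roblin} tested against the continuous function $\mathbf{1}_{\overline{X}\times\overline{X}}$; and $\mu_n(\gamma)\to 0$ pointwise because each fixed $\gamma$ lies in $C_n(x)$ for at most one value of~$n$. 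Applying Proposition~\ref{deform} with $W$ in the role of its generic Borel set, and using the identity $\mathcal{C}_x(U)\cap\mathcal{C}_x(W(a))=\mathcal{C}_x(U\cap W(a))$, this gives, for every $a>0$,
$$\limsup_n\langle\mathcal{M}^n_x(\chi_{\mathcal{C}_x(U)})\mathbf{1}_{\partial X},\chi_W\rangle\le\limsup_n c_{\Gamma,F}\,{\rm e}^{-\sigma_{\Gamma,F} n}\sum_{\gamma\in C_n(x)}{\rm e}^{d^F(x,\gamma x)}\chi_{\mathcal{C}_x(U\cap W(a))}(\gamma x).$$

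Next I would bound the right-hand side via Corollary~\ref{coroPaulin} applied with the Borel pair $(U\cap W(a),\partial X)$: since $\chi_{\mathcal{C}_x(\partial X)}(\gamma^{-1}x)\equiv 1$, the corollary yields
$$\limsup_n c_{\Gamma,F}\,{\rm e}^{-\sigma_{\Gamma,F} n}\sum_{\gamma\in C_n(x)}{\rm e}^{d^F(x,\gamma x)}\chi_{\mathcal{C}_x(U\cap W(a))}(\gamma x)\le\|\nu^F_x\|\,\nu^F_x\bigl(\overline{U\cap W(a)}\bigr).$$
To conclude I would let $a\to+\infty$. From $\overline{U\cap W(a)}\subset\overline{U}\cap\overline{W(a)}$ and a short argument in the visual metric showing that $\bigcap_{a>0}\overline{W(a)}=\overline{W}$, one deduces $\bigcap_{a>0}(\overline{U}\cap\overline{W(a)})=\overline{U}\cap\overline{W}$. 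The disjointness $U\cap W=\varnothing$ forces $\overline{U}\cap\overline{W}\subset\partial U\cup\partial W$, a set of $\nu^F_x$-measure zero by hypothesis, so downward $\sigma$-continuity of the finite measure $\nu^F_x$ gives $\nu^F_x(\overline{U\cap W(a)})\to 0$, completing the proof.

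The delicate point I anticipate is precisely this final topological limit: one must check that the open thickening $W(a)$ does not leak a stubborn amount of mass into $\overline{U}\cap\overline{W(a)}$ as $a\to+\infty$. This is exactly the role played by the two hypotheses $\nu^F_x(\partial U)=\nu^F_x(\partial W)=0$, since only they ensure that the limit set $\overline{U}\cap\overline{W}\subset\partial U\cup\partial W$ is $\nu^F_x$-negligible. Everything else is essentially bookkeeping that casts $\langle\mathcal{M}^n_x(\chi_{\mathcal{C}_x(U)})\mathbf{1}_{\partial X},\chi_W\rangle$ into a form to which Proposition~\ref{deform} and Corollary~\ref{coroPaulin} directly apply.
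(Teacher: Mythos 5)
Your proof is correct and follows essentially the same route as the paper: rewrite the matrix coefficient as $\sum_\gamma\mu_n(\gamma)\langle\pi_x(\gamma)\mathbf{1},\chi_W\rangle/\phi_x(\gamma)$, verify the two hypotheses of Proposition~\ref{deform}, pass to $\mathcal{C}_x(U\cap W(a))$, apply Corollary~\ref{coroPaulin}, and let $a\to+\infty$. The only cosmetic difference is in the last step: the paper restricts to the cocountable set of $b$ for which $\nu^F_x(\partial W(b))=0$ so that the closure on the right of Corollary~\ref{coroPaulin} can be dropped and one lands directly on $\nu^F_x(U\cap W(b))$, whereas you keep the closure, observe that $\bigcap_{a>0}\overline{W(a)}=\overline{W}$ and that disjointness forces $\overline{U}\cap\overline{W}\subset\partial U\cup\partial W$, and finish by downward continuity of the finite measure — an equally valid bookkeeping.
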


\begin{proof}
For all integers $n$ we have:
\begin{eqnarray*}
 \langle \mathcal{M} ^{n}_{x} (\chi_{\mathcal{C}_{x}(U)} ) \mathbf{1}_{\partial X} , \chi_{W} \rangle &=&c_{\Gamma,F}{\rm e}^{-\sigma_{\Gamma,F} n}  \sum_{\gamma \in C_{n}(x)} {\rm{e}}^{d^{F}(x,\gamma x)}D_{\gamma  x} (\chi_{\mathcal{C}_{x}(U)}) \frac{\langle \pi_{x}(\gamma)\mathbf{1}_{\partial X},\chi_{W} \rangle}{\phi_{x}(\gamma)} \\
  & = &\sum_{\gamma \in \Gamma} \mu_{n}(\gamma)\frac{ \langle \pi_{x}(\gamma)\mathbf{1}_{\partial X},\chi_{W} \rangle}{\phi_{x}(\gamma)},
\end{eqnarray*}

where the inequality follows from the fact that $\pi_{x}$ preserves the cone of positive functions, and where $$\mu_{n}(\gamma):=c_{\Gamma,F}{\rm e}^{-\sigma_{\Gamma,F} n}  {\rm{e}}^{d^{F}(x,\gamma x)}\chi_{C_{n}(x)}(\gamma) D_{\gamma  x} (\chi_{\mathcal{C}_{x}(U)}).$$
 Observe that Corollary \ref{coroPaulin} implies that 
 $$\sup_{n} \|\mu_{n}\|_{\ell^1}<+\infty.$$
Proposition \ref{deform} implies for $b>0$:
\begin{eqnarray*}
\limsup_{n \rightarrow + \infty} \langle \mathcal{M} ^{n}_{x} (\chi_{\mathcal{C}_{x}(U)} ) \mathbf{1}_{\partial X} , \chi_{W} \rangle &\leq& \limsup_{n \rightarrow +\infty} \sum_{\gamma \in \Gamma} \mu_{n}(\gamma)D_{\gamma  x}(\chi_{\mathcal{C}_{x}(W(b))}) \\
&=&\limsup_{n \rightarrow +\infty}  c_{\Gamma,F} {\rm e}^{-\sigma_{\Gamma,F} n}  \sum_{\gamma \in C_{n}(x)} {\rm{e}}^{d^{F}(x,\gamma x)}D_{\gamma  x} (\chi_{\mathcal{C}_{x}(U)\cap \mathcal{C}_{x}(W(b))})\\
&= &\limsup_{n \rightarrow +\infty}  c_{\Gamma,F}{\rm e}^{-\sigma_{\Gamma,F} n}  \sum_{\gamma \in C_{n}(x)} {\rm{e}}^{d^{F}(x,\gamma x)}D_{\gamma  x} (\chi_{ \mathcal{C}_{x}(U\cap W(b))}).  
 \end{eqnarray*}
 Note the general fact $\partial (A\cap B)\subset \partial A \cup \partial B$. Since all, but at most countably many of the sets $W(b)$ have zero measure boundary
 Corollary \ref{coroPaulin} implies that
 $$\limsup_{n \rightarrow +\infty} \langle \mathcal{M} ^{n}_{x} (\chi_{\mathcal{C}_{x}(U)} ) \mathbf{1}_{\partial X} , \chi_{W} \rangle \leq \nu^{F}_{x}\big(U\cap W(b)\big) \cdot$$
  With the hypothesis  $U\cap W=\varnothing$, we have by letting $b\rightarrow +\infty$ $$ \limsup_{n \rightarrow +\infty} \langle \mathcal{M} ^{n}_{x} (\chi_{\mathcal{C}_{x}(U)} ) \mathbf{1}_{\partial X} , \chi_{W} \rangle =0.$$
 \end{proof}

\begin{lemma}\label{lemprepar2}
Let $U$ be a Borel subset of $\partial X$ and let $V$ be a Borel subset of $\partial X$. For $a>0$ we have $$\limsup_{n\rightarrow +\infty} \langle \mathcal{M} ^{n}_{x} (\chi_{\mathcal{C}_{x}(U) } )  \chi_{V} ,\mathbf{1}_{\partial X} \rangle \leq \limsup_{n \rightarrow +\infty} c_{\Gamma,F} {\rm e}^{-\sigma_{\Gamma,F} n} \sum_{\gamma \in C_{n}(x)}  {\rm{e}}^{d^{F}(x,\gamma x)} D_{\gamma^{-1} x } (\chi_{\mathcal{C}_{x}(U)})  D_{\gamma  x} (\chi_{\mathcal{C}_{x}(V(a))}).$$ 
 \end{lemma}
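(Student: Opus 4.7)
The plan is to reduce the claim to Proposition \ref{deform} after swapping $\gamma \leftrightarrow \gamma^{-1}$ in the matrix coefficient. First I would expand the definition of $\mathcal{M}^n_x$ and write
\[
\langle \mathcal{M}^n_x(\chi_{\mathcal{C}_x(U)})\chi_V, \mathbf{1}_{\partial X}\rangle
 = c_{\Gamma,F}{\rm e}^{-\sigma_{\Gamma,F} n}\!\!\sum_{\gamma\in C_n(x)} {\rm e}^{d^F(x,\gamma x)} D_{\gamma x}(\chi_{\mathcal{C}_x(U)})\,
 \frac{\langle \pi_x(\gamma)\chi_V,\mathbf{1}_{\partial X}\rangle}{\phi_x(\gamma)}.
\]
Because $\pi_x$ is unitary and $\mathbf{1}_{\partial X}$ is real, $\pi_x(\gamma)^*=\pi_x(\gamma^{-1})$ gives
$\langle \pi_x(\gamma)\chi_V,\mathbf{1}_{\partial X}\rangle = \langle \pi_x(\gamma^{-1})\mathbf{1}_{\partial X},\chi_V\rangle$ and $\phi_x(\gamma)=\phi_x(\gamma^{-1})$.

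Next I perform the change of summation index $\tilde\gamma=\gamma^{-1}$. Since $d$ is symmetric and $\gamma^{-1}$ is an isometry we have $d(x,\gamma x)=d(x,\gamma^{-1}x)$, so $\gamma\in C_n(x)\Leftrightarrow \tilde\gamma\in C_n(x)$. The $\Gamma$-invariance and symmetry of $\widetilde{F}$ give $d^F(x,\gamma x)=d^F(\gamma x,\gamma\gamma^{-1}x)=d^F(x,\tilde\gamma x)$ (using (\ref{sym}) together with $\Gamma$-invariance of $\widetilde F$). Consequently the quantity of interest rewrites as
\[
\sum_{\tilde\gamma\in\Gamma}\mu_n(\tilde\gamma)\,
\frac{\langle \pi_x(\tilde\gamma)\mathbf{1}_{\partial X},\chi_V\rangle}{\phi_x(\tilde\gamma)},
\qquad
\mu_n(\tilde\gamma):=c_{\Gamma,F}{\rm e}^{-\sigma_{\Gamma,F} n}{\rm e}^{d^F(x,\tilde\gamma x)} \chi_{C_n(x)}(\tilde\gamma)\,D_{\tilde\gamma^{-1}x}(\chi_{\mathcal{C}_x(U)}).
\]

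Then I check the two hypotheses of Proposition \ref{deform} for $(\mu_n)$. The pointwise condition $\mu_n(\tilde\gamma)\to0$ is immediate because each fixed $\tilde\gamma$ belongs to at most one annulus $C_n(x)$. The uniform $\ell^1$-bound $\sup_n\|\mu_n\|_{\ell^1}<\infty$ follows from Corollary \ref{coroPaulin} applied to the Borel sets $U\subset\partial X$ and $V=\partial X$, since the $\ell^1$-norm is exactly
\[
c_{\Gamma,F}{\rm e}^{-\sigma_{\Gamma,F} n}\!\!\sum_{\tilde\gamma\in C_n(x)} {\rm e}^{d^F(x,\tilde\gamma x)} (D_{\tilde\gamma x}\otimes D_{\tilde\gamma^{-1}x})(\chi_{\mathcal{C}_x(\partial X)}\otimes\chi_{\mathcal{C}_x(U)}),
\]
whose $\limsup$ is dominated by $\nu^F_x(\overline{\partial X})\nu^F_x(\overline{U})<\infty$.

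Finally I apply Proposition \ref{deform} to the Borel set $V$ and the sequence $(\mu_n)$, obtaining for every $a>0$
\[
\limsup_{n\to+\infty}\sum_{\tilde\gamma}\mu_n(\tilde\gamma)\frac{\langle \pi_x(\tilde\gamma)\mathbf{1}_{\partial X},\chi_V\rangle}{\phi_x(\tilde\gamma)}
\leq \limsup_{n\to+\infty}\sum_{\tilde\gamma}\mu_n(\tilde\gamma)\,D_{\tilde\gamma x}(\chi_{\mathcal{C}_x(V(a))}),
\]
and the right-hand side is precisely the bound stated in the lemma after unfolding $\mu_n$. The only point that requires care is verifying the $\ell^1$ bound (needing the full strength of Paulin--Pollicott--Schapira via Corollary \ref{coroPaulin}) and the symmetry manipulations allowing the swap $\gamma\leftrightarrow\gamma^{-1}$; once these are in place the result is a direct application of Proposition \ref{deform}.
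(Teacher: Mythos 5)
Your proof is correct and follows essentially the same route as the paper's: taking the adjoint of $\mathcal{M}^n_x(\chi_{\mathcal{C}_x(U)})$ (equivalently, your $\gamma\leftrightarrow\gamma^{-1}$ swap together with $\phi_x(\gamma)=\phi_x(\gamma^{-1})$ and $d^F(x,\gamma x)=d^F(x,\gamma^{-1}x)$ from symmetry of $\widetilde F$), rewriting the matrix coefficient as $\sum_\gamma\mu_n(\gamma)\langle\pi_x(\gamma)\mathbf{1},\chi_V\rangle/\phi_x(\gamma)$, and applying Proposition \ref{deform}. You are in fact more careful than the paper in spelling out the verification of the two hypotheses of Proposition \ref{deform} (the uniform $\ell^1$-bound via Corollary \ref{coroPaulin} with $A=\partial X$, $B=U$, and the pointwise decay), which the paper leaves implicit.
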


\begin{proof}
We have for all integer $n$:
\begin{eqnarray*}
\langle \mathcal{M} ^{n}_{x} (\chi_{\mathcal{C}_{x}(U)} )  \chi_{V} ,\mathbf{1}_{\partial X} \rangle &=& \langle  \chi_{V} ,\mathcal{M} ^{n}_{x} (\chi_{\mathcal{C}_{x}(U)} )^{*} \mathbf{1}_{\partial X} \rangle\\
&= & c_{\Gamma,F} {\rm e}^{-\sigma_{\Gamma,F} n} \sum_{\gamma \in C_{n}(x)} {\rm{e}}^{d^{F}(x,\gamma x)}D_{\gamma ^{-1} x}(\chi_{\mathcal{C}_{x}(U)})\frac{\langle \pi_{x}(\gamma) \mathbf{1}_{\partial X} ,\chi_{V} \rangle}{\phi_{x}(\gamma)} \\
&\leq& \sum_{\gamma \in \Gamma} \mu_{n}(\gamma)\frac{\langle \pi_{x}(\gamma) \mathbf{1}_{\partial X} ,\chi_{V} \rangle}{\phi_{x}(\gamma)},
\end{eqnarray*}
with $$\mu_{n}(\gamma)= c_{\Gamma,F} {\rm e}^{-\sigma_{\Gamma,F} n}{\rm{e}}^{d^{F}(x,\gamma x)}\chi_{C_{n}(x)}(\gamma) D_{\gamma ^{-1} x}(\chi_{\mathcal{C}_{x}(U)}).$$

Applying Proposition \ref{deform} to $\mu_{n}$ defined above we obtain for all $a>0$: 
$$\limsup_{n\rightarrow +\infty} \langle \mathcal{M} ^{n}_{x} (\chi_{\mathcal{C}_{x}(U) } )  \chi_{V} ,\mathbf{1}_{\partial X} \rangle \leq \limsup_{n \rightarrow +\infty}c_{\Gamma,F} {\rm e}^{-\sigma_{\Gamma,F} n}\sum_{\gamma \in C_{n}(x)} D_{\gamma^{-1} x } (\chi_{\mathcal{C}_{x}(U)})  D_{\gamma  x} (\chi_{\mathcal{C}_{x}(V(a))}).$$
\end{proof}

\begin{lemma}\label{lemineg}
Let  $U,V,W \subset \partial {X}$ be Borel subsets such that $\nu^{F}_{x}(\partial U)=\nu^{F}_{x}(\partial V)=\nu^{F}_{x}(\partial W)=0$. Then 

$$\limsup_{n\rightarrow +\infty} \langle \mathcal{M}^{n}_{x}(\chi_{\mathcal{C}_{x}(U)})\chi_{V},\chi_{W} \rangle\leq \nu^{F}_{x}(U \cap W)\nu^{F}_{x}(V)\cdot $$
\end{lemma}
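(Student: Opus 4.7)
The plan is to extend the arguments of Lemmas \ref{lemprepar1} and \ref{lemprepar2}, which each deformed a single indicator against the constant function $\mathbf{1}$, into a \emph{simultaneous} deformation of both $\chi_V$ and $\chi_W$. The matrix coefficient of interest is
\[
	\frac{\langle \pi_x(\gamma)\chi_V,\chi_W\rangle}{\phi_x(\gamma)},
\]
and Lemma \ref{fonda} tells us this is $O(1/d(x,\gamma x))$ as soon as the destination point $w_x^{\gamma x}$ is far from $W$; applying the unitarity identity $\langle \pi_x(\gamma)\chi_V,\chi_W\rangle=\langle \pi_x(\gamma^{-1})\chi_W,\chi_V\rangle$ (and $\phi_x(\gamma)=\phi_x(\gamma^{-1})$), the same decay holds when $w_x^{\gamma^{-1}x}$ is far from $V$. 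Thus only those $\gamma$ for which both $\gamma x$ and $\gamma^{-1}x$ ``shadow'' into thickenings of $W$ and $V$ contribute in the limit, and their contribution will be controlled by Corollary~\ref{coroPaulin}.

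More concretely, fix $a,b>0$ and split $C_n(x)$ into
\[
	\Gamma_{1,n}=\{\gamma\in C_n(x):w_x^{\gamma x}\notin W(b)\},\quad
	\Gamma_{2,n}=\{\gamma\in C_n(x):w_x^{\gamma^{-1}x}\notin V(a)\},
\]
and $\Gamma_{3,n}=C_n(x)\setminus(\Gamma_{1,n}\cup\Gamma_{2,n})$. On $\Gamma_{1,n}$, since $W\cap B(w_x^{\gamma x},{\rm e}^{-b})=\varnothing$, the bound $\chi_V\leq\mathbf{1}_{\partial X}$ together with Lemma \ref{fonda} applied to $W$ yields
\(
	\langle\pi_x(\gamma)\chi_V,\chi_W\rangle/\phi_x(\gamma)\leq C_b/(n-1);
\)
combining with $D_{\gamma x}(\chi_{\mathcal{C}_x(U)})\leq 1$ and the uniform $\ell^1$-bound on $c_{\Gamma,F}{\rm e}^{-\sigma_{\Gamma,F} n}\sum_{C_n(x)}{\rm e}^{d^F(x,\gamma x)}$ coming from Theorem \ref{roblin}, the contribution from $\Gamma_{1,n}$ is $O(1/n)$ and vanishes in the limit. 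The argument on $\Gamma_{2,n}$ is identical after invoking the adjoint identity above and Lemma \ref{fonda} applied to $\gamma^{-1}$.

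On the good set $\Gamma_{3,n}$ we use the trivial bound $\langle\pi_x(\gamma)\chi_V,\chi_W\rangle\leq\phi_x(\gamma)$; the defining conditions $w_x^{\gamma x}\in W(b)$ and $w_x^{\gamma^{-1}x}\in V(a)$ translate into $\gamma x\in\mathcal{C}_x(W(b))$ and $\gamma^{-1}x\in\mathcal{C}_x(V(a))$, hence (using $\mathcal{C}_x(A)\cap\mathcal{C}_x(B)=\mathcal{C}_x(A\cap B)$) the $\Gamma_{3,n}$-contribution is bounded above by
\[
	c_{\Gamma,F}{\rm e}^{-\sigma_{\Gamma,F}n}\sum_{\gamma\in C_n(x)}{\rm e}^{d^F(x,\gamma x)}(D_{\gamma x}\otimes D_{\gamma^{-1}x})\big(\chi_{\mathcal{C}_x(U\cap W(b))}\otimes\chi_{\mathcal{C}_x(V(a))}\big),
\]
and Corollary \ref{coroPaulin} gives $\limsup_n\leq \nu^F_x(\overline{U\cap W(b)})\,\nu^F_x(\overline{V(a)})$. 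Letting $a,b\to+\infty$, the hypothesis $\nu^F_x(\partial U)=\nu^F_x(\partial V)=\nu^F_x(\partial W)=0$ together with $\partial(U\cap W)\subset\partial U\cup\partial W$ yields $\nu^F_x(\overline{V(a)})\to\nu^F_x(V)$ and $\nu^F_x(\overline{U\cap W(b)})\to\nu^F_x(U\cap W)$, completing the argument. The main technical obstacle is the \emph{symmetric} use of Lemma \ref{fonda} in both directions, which requires the unitarity/adjoint identity to convert the bound ``$w_x^{\gamma^{-1}x}$ far from $V$'' into a decay estimate; once that is in place the three-piece decomposition and the double limit $a,b\to+\infty$ follow the pattern of Lemmas \ref{lemprepar1}--\ref{lemprepar2}.
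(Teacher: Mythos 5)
Your proof is correct, and it takes a genuinely different route from the paper's. The paper does not decompose the index set $C_n(x)$; instead it splits the set $U$ into $U_1 = U\cap W(b)$ and $U_2 = U\cap W(b)^c$, bounds $\langle\mathcal{M}^n_x(\chi_{\mathcal{C}_x(U_1)})\chi_V,\chi_W\rangle \leq \langle\mathcal{M}^n_x(\chi_{\mathcal{C}_x(U_1)})\chi_V,\mathbf{1}_{\partial X}\rangle$ and $\langle\mathcal{M}^n_x(\chi_{\mathcal{C}_x(U_2)})\chi_V,\chi_W\rangle \leq \langle\mathcal{M}^n_x(\chi_{\mathcal{C}_x(U_2)})\mathbf{1}_{\partial X},\chi_W\rangle$, and then invokes Lemma~\ref{lemprepar2} and Lemma~\ref{lemprepar1} respectively as black boxes before applying Corollary~\ref{coroPaulin}. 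You instead split the group-element sum into three pieces according to whether the shadow points $w^{\gamma x}_x$ and $w^{\gamma^{-1}x}_x$ land in the thickenings $W(b)$ and $V(a)$, killing the bad pieces directly with Lemma~\ref{fonda} (and its adjoint form, using $\phi_x(\gamma)=\phi_x(\gamma^{-1})$ and positivity) and controlling the good piece with Corollary~\ref{coroPaulin}. The two arguments ultimately rest on exactly the same ingredients — Lemma~\ref{fonda} used once in each direction, plus the equidistribution corollary — but where the paper factors the work through the two preparatory lemmas, you unroll both deformations into a single simultaneous three-way decomposition of $C_n(x)$. The paper's route is more modular; yours is more self-contained and makes the symmetric roles of $V$ and $W$ (via the adjoint identity) visible in one place, and it also sidesteps the paper's parenthetical restriction to the at-most-countably-many $a,b$ for which $\nu^F_x(\partial V(a))=\nu^F_x(\partial W(b))=0$, since you take the limit $a,b\to+\infty$ with the closures kept in place (using $\bigcap_a \overline{V(a)}=\overline{V}$ and $\bigcap_b\overline{U\cap W(b)}\subset\overline{U}\cap\overline{W}$, whose measure equals $\nu^F_x(U\cap W)$ under the zero-boundary hypotheses).
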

\begin{proof}
Let $a>0$ and $b>0$, and consider $V(a)$ and $W(b)$ such that $\nu^{F}_{x}(\partial W(b))=0=\nu^{F}_{x}(\partial V(a))$. Let $W(b)^{c}=\partial X \backslash W(b)$. Set $U_{1}=U \cap W(b)$ and  $U_{2}=U \cap W(b)^c$. Observe that $U_{1}\cap W(b)^c=\varnothing=U_{2}\cap W(b)$. It is easy to see that we can extend $U_{1}$ and $U_{2}$ to $\overline{X}$ by $\mathcal{C}_{x}(U_{1})$ and $\mathcal{C}_{x}(U_{2})$ such that $\mathcal{C}_{x}(U)=\mathcal{C}_{x}(U_{1})\sqcup \mathcal{C}_{x}(U_{2})$ since $U=U_{1}\sqcup U_{2}$.
We have:
\begin{eqnarray*}
\langle \mathcal{M}_{x}^{n}(\chi_{\mathcal{C}_{x}(U) })\chi_{V},\chi_{W}\rangle&=&
 \langle \mathcal{M}_{x}^{n}(\chi_{\mathcal{C}_{x}(U_{1})})\chi_{V},\chi_{W} \rangle+\langle \mathcal{M}_{x}^{n}(\chi_{\mathcal{C}_{x}(U_{2})})\chi_{V},\chi_{W} \rangle\\
 &\leq & \langle \mathcal{M}_{x}^{n}(\chi_{\mathcal{C}_{x}(U_{1})})\chi_{V},\mathbf{1}_{\partial X} \rangle+\langle \mathcal{M}_{x}^{n}(\chi_{\mathcal{C}_{x}(U_{2})})\mathbf{1}_{\partial X},\chi_{W} \rangle.
\end{eqnarray*}
 Applying Lemma \ref{lemprepar1} to the second term and Lemma \ref{lemprepar2} to the first term of the right hand side  inequality above, we obtain: $$\limsup_{n \rightarrow +\infty} \langle \mathcal{M}_{x}^{n}(\chi_{\mathcal{C}_{x}(U) })\chi_{V},\chi_{W}\rangle \leq \limsup_{n \rightarrow +\infty}c_{\Gamma,F} {\rm e}^{-\sigma_{\Gamma,F} n} \sum_{\gamma \in C_{n}(x)}D_{\gamma ^{-1} x} (\chi_{\mathcal{C}_{x}(U_{1})})  D_{\gamma  x} (\chi_{\mathcal{C}_{x}(V(a))} ).$$ 
  Then, since $\nu^{F}_{x}(\partial U_{1})=0=\nu^{F}_{x}(\partial V(a))$, Corollary \ref{coroPaulin} leads to 
  $$\limsup_{n \rightarrow +\infty} \langle \mathcal{M}_{x}^{n}(\chi_{\mathcal{C}_{x}(U) })\chi_{V},\chi_{W}\rangle \leq \nu^{F}_{x}(U\cap W(b))\nu^{F}_{x}(V(a)).$$
 Because the above inequality holds for all but at most countably many values of $a$ and $b$, by letting them go to $+\infty$ we obtain the required inequality.
 
\end{proof}

\begin{proof}[Proof of Proposition \ref{applicationroblin}]
By Lemma \ref{lemineg} it is sufficient to prove that $$\liminf_{n\rightarrow +\infty} \langle \mathcal{M}^{n}_{x}(\chi_{\mathcal{C}_{x}(U)})\chi_{V},\chi_{W} \rangle=\nu^{F}_{x}(U \cap W)\nu^{F}_{x}(V)\cdot $$
If $B$ is a Borel subset of $\partial X$, we set $B^{0}=B$ and $B^{1}=\partial X \backslash B$. We have 
\begin{eqnarray*}
\langle \mathcal{M}_{x}^{n}(\mathbf{1}_{\overline{ X}})\mathbf{1}_{\partial X},\mathbf{1}_{\partial X}\rangle 
&=&\langle \mathcal{M}_{x}^{n}(\chi_{\mathcal{C}_{x}(U^0)}+\chi_{\mathcal{C}_{x}(U^1)})\chi_{V^0}+\chi_{V^1},\chi_{W^0}+\chi_{W^1} \rangle \\
 &=& \sum_{i,j,k} \langle\mathcal{M}_{x}^{n}(\chi_{\mathcal{C}_{x}(U^i)})\chi_{V^j},\chi_{W^k} \rangle \\
 &=&  \langle\mathcal{M}_{x}^{n}(\chi_{\mathcal{C}_{x}(U)})\chi_{V},\chi_{W} \rangle+\sum_{i,j,k \neq (0,0,0)} \langle\mathcal{M}_{x}^{n}(\chi_{\mathcal{C}_{x}(U^i)})\chi_{V^j},\chi_{W^k} \rangle. 
\end{eqnarray*}
Since $\liminf _{n \rightarrow +\infty}\langle \mathcal{M}_{x}^{n}(\mathbf{1}_{\overline{ X}})\mathbf{1}_{\partial X},\mathbf{1}_{\partial X}\rangle=\lim _{n \rightarrow +\infty}\langle \mathcal{M}_{x}^{n}(\mathbf{1}_{\overline{ X}})\mathbf{1}_{\partial X},\mathbf{1}_{\partial X}\rangle = \|\nu^{F}_{x}\|^{2}$ we have:
\begin{eqnarray*}
\|\nu^{F}_{x}\|^{2} &\leq& \liminf_{n \rightarrow +\infty}\langle \mathcal{M}_{x}^{n}(\chi_{\mathcal{C}_{x}(U)})\chi_{V},\chi_{W} \rangle+\sum_{i,j,k \neq (0,0,0)}  \limsup_{n \rightarrow + \infty} \langle \mathcal{M}_{x}^{n}(\chi_{\mathcal{C}_{x}(U^i)})\chi_{V^j},\chi_{W^k} \rangle \\
&\leq &  \limsup_{n \rightarrow +\infty}\langle \mathcal{M}_{x}^{n}(\chi_{\mathcal{C}_{x}(U)})\chi_{V},\chi_{W} \rangle+\sum_{i,j,k \neq (0,0,0)}  \limsup_{n \rightarrow + \infty} \langle \mathcal{M}_{x}^{n}(\chi_{\mathcal{C}_{x}(U^i)})\chi_{V^j},\chi_{W^k} \rangle \\
& \leq &\sum_{i,j,k } \nu^{F}_{x}(U^{i} \cap W^k) \nu^{F}_{x}(V^j) \\
&=& \|\nu^{F}_{x}\|^{2},
\end{eqnarray*}
where the last inequality comes from Lemma \ref{lemineg}. Hence the inequalities of the above computation are equalities, so $$\liminf_{n\rightarrow +\infty} \langle \mathcal{M}^{n}_{x}(\chi_{\mathcal{C}_{x}(U)})\chi_{V},\chi_{W} \rangle =\nu^{F}_{x}(U \cap W)\nu^{F}_{x}(V)=\limsup_{n\rightarrow +\infty} \langle \mathcal{M}^{n}_{x}(\chi_{\mathcal{C}_{x}(U)})\chi_{V},\chi_{W} \rangle $$
and the proof is done.

\end{proof}
\section{Conclusion}\label{conclusion}

\subsection{Standard facts about Borel subsets of measure zero frontier}
Recall two standard facts about measure theory:
\begin{lemma}\label{mesurenullebord}
Assume that $(Z,d,\mu)$ is a metric measure space. Then the $\sigma$-algebra generated by the Borel subsets with measure zero frontier generates the Borel $\sigma$-algebra.
\end{lemma}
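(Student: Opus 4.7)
Let $\mathcal{B}_0:=\{B\subset Z : B \text{ Borel and } \mu(\partial B)=0\}$, and denote by $\sigma(\mathcal{B}_0)$ the $\sigma$-algebra it generates. The inclusion $\sigma(\mathcal{B}_0)\subset \mathcal{B}(Z)$ is immediate since $\mathcal{B}_0$ consists of Borel sets. To prove the reverse inclusion it suffices to show that every open set belongs to $\sigma(\mathcal{B}_0)$. Using separability of $Z$ (a property that is implicit in the setting of application, where $Z$ will be a compact metrizable boundary), every open set is a countable union of open balls, so one is reduced to proving that every open ball $B(x,r)$ lies in $\sigma(\mathcal{B}_0)$.

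The key observation is the following. Fix $x\in Z$. For each radius $r>0$, the topological boundary $\partial B(x,r)$ is contained in the sphere $S(x,r):=\{y\in Z : d(x,y)=r\}$. The family $\{S(x,\rho)\}_{\rho>0}$ is pairwise disjoint, so since $\mu$ is assumed to be finite on bounded sets, at most countably many radii $\rho$ can satisfy $\mu(S(x,\rho))>0$. Consequently the set
\[
	R_x:=\{\rho>0 : \mu(\partial B(x,\rho))=0\}
\]
is cocountable in $(0,+\infty)$, and in particular dense. Pick a sequence $r_n\in R_x$ with $r_n\nearrow r$; then $B(x,r_n)\in \mathcal{B}_0$ and
\[
	B(x,r)=\bigcup_{n\geq 1}B(x,r_n)\in \sigma(\mathcal{B}_0).
\]

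Combining these two points, every open set of $Z$ lies in $\sigma(\mathcal{B}_0)$, hence $\mathcal{B}(Z)=\sigma(\text{open sets})\subset \sigma(\mathcal{B}_0)$, which together with the trivial inclusion gives equality.

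The only potential obstacle is making precise the background assumptions on $(Z,d,\mu)$: one needs separability of the metric space (so open sets decompose as countable unions of balls) and local finiteness of $\mu$ (so that the disjointness of spheres around a fixed point forces at most countably many of them to carry positive mass). In the applications considered in the paper, $Z=\overline{X}$ or $\partial X$ carries a compact metrizable topology and $\mu$ is a finite Patterson density, so both conditions hold trivially and the argument above applies.
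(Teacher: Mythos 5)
The paper states this lemma without proof, presenting it as a standard fact from measure theory, so there is no paper argument to compare against. Your proof is the standard one and is correct: the key observation that the spheres $\{S(x,\rho)\}_{\rho>0}$ are pairwise disjoint, so at most countably many can carry positive mass, yields a cocountable (hence dense) set of radii $\rho$ with $\mu(\partial B(x,\rho))=0$, and taking an increasing sequence $r_n\nearrow r$ of such radii expresses $B(x,r)$ as a countable union of sets in $\mathcal{B}_0$. You are also right to flag that the argument requires separability of $Z$ (so open sets decompose as countable unions of balls) and local finiteness of $\mu$ (for the disjointness argument); these are not spelled out in the paper's hypotheses but hold in every case where the lemma is applied ($Z=\overline{X}$ or $\partial X$ compact metrizable, $\mu$ finite), so the gap is one of precision in the paper's statement rather than in your argument.
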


Let $\chi_{A}$ be the characteristic function of a Borel subset $A$ of $\partial X$.

\begin{lemma}\label{denseL2}
Assume that $(Z,d,\mu)$ is a metric measure space such that $\mu$ is regular. Then the closure of the subspace spanned by the characteristic functions of Borel subsets having zero measure frontier is $$\overline{Span\lbrace \chi_{A} | \mu(\partial A)=0 \rbrace}^{L^2}=L^{2}(Z,\mu).$$
\end{lemma}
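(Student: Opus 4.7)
The plan is to leverage the previous Lemma \ref{mesurenullebord} together with a standard approximation argument: the essential observation is that the class $\mathcal{A} := \{A \in \mathcal{B}(Z) : \mu(\partial A) = 0\}$ is not merely a generating family for the Borel $\sigma$-algebra, but is in fact an \emph{algebra} of subsets. Once this is established, density reduces to standard measure theory. Throughout, $\mu$ will be finite (as is the case in the relevant applications, where $Z = \overline{X}$ is compact and $\mu$ is a Patterson density of finite mass).

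First, I would verify the algebra structure of $\mathcal{A}$. Since $\partial A = \partial(Z\setminus A)$ for any subset $A \subset Z$, the class $\mathcal{A}$ is closed under complementation. Since $\partial(A \cup B) \subset \partial A \cup \partial B$, it is also closed under finite unions, and likewise under finite intersections. Together with $\emptyset, Z \in \mathcal{A}$, this shows $\mathcal{A}$ is an algebra of subsets of $Z$.

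Next, I would invoke the standard approximation theorem for finite measures: if $\mathcal{A}$ is an algebra and $\mu$ is a finite measure on $\sigma(\mathcal{A})$, then for every $B \in \sigma(\mathcal{A})$ and every $\varepsilon > 0$, there exists $A \in \mathcal{A}$ with $\mu(A \triangle B) < \varepsilon$. By Lemma \ref{mesurenullebord}, $\sigma(\mathcal{A}) = \mathcal{B}(Z)$, so the approximation applies to every Borel subset $B$ of $Z$. Since
\[
\|\chi_A - \chi_B\|_{L^2}^2 = \mu(A \triangle B),
\]
it follows that $\chi_B$ belongs to the $L^2$-closure of the linear span of $\{\chi_A : A \in \mathcal{A}\}$.

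Finally, since the regularity of $\mu$ ensures that simple functions (finite linear combinations of characteristic functions of Borel subsets) are dense in $L^{2}(Z,\mu)$, combining the previous step with this density yields the desired conclusion. The only genuine verification is the algebra structure of $\mathcal{A}$, which rests on the two elementary identities $\partial A = \partial(Z\setminus A)$ and $\partial(A \cup B) \subset \partial A \cup \partial B$; no serious obstacle arises in the argument.
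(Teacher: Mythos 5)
Your proof is correct, and it takes a genuinely different route from the paper's. The paper disposes of this lemma with the one-line remark that it is ``a direct application of the Lebesgue differentiation theorem,'' the intended argument presumably being: if $f\in L^{2}(Z,\mu)$ is orthogonal to $\chi_A$ for every $A$ with $\mu(\partial A)=0$, then for $\mu$-a.e.\ $x$ one has $f(x)=\lim_{n}\frac{1}{\mu(B(x,r_n))}\int_{B(x,r_n)}f\,d\mu$ along a sequence $r_n\to 0$ chosen so that $\mu(\partial B(x,r_n))=0$ (possible because, the spheres $\{\partial B(x,r)\}_{r>0}$ being pairwise disjoint, at most countably many can carry positive mass), whence $f=0$ a.e.\ and the orthogonal complement of the span is trivial. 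This is slick but implicitly invokes a Lebesgue differentiation theorem on $(Z,d,\mu)$, which requires additional covering hypotheses on the measure (Vitali or Besicovitch, doubling, etc.) beyond mere regularity. Your route instead observes that $\mathcal{A}=\{A:\mu(\partial A)=0\}$ is an algebra (via $\partial(Z\setminus A)=\partial A$ and $\partial(A\cup B)\subset\partial A\cup\partial B$), then applies the classical approximation-in-symmetric-difference theorem for finite measures together with Lemma~\ref{mesurenullebord} to show $\sigma(\mathcal{A})=\mathcal{B}(Z)$ is approximated by $\mathcal{A}$, and finishes with density of simple functions. This is more elementary and imposes weaker hypotheses: it only needs $\mu$ finite (true in the paper's application, as you note) and Lemma~\ref{mesurenullebord}, rather than a differentiation theorem. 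One cosmetic slip: density of simple functions in $L^{2}$ for a finite measure does not require regularity of $\mu$; that hypothesis is better tied to Lemma~\ref{mesurenullebord} (or to the paper's differentiation-theorem approach), but this does not affect the validity of your argument.
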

The proof is a direct application of the Lebesgue differentiation theorem.

\subsection{Proofs}

\begin{proof}[Proof of Theorem A] 
Let $\nu^{F}$ be a $\Gamma$-invariant Gibbs conformal density of dimension $\sigma_{\Gamma,F}$ with $\widetilde{F}$ a symmetric potential function and $\Gamma$ convex cocompact. Let $x$ be in the $CH(\Lambda_{\Gamma})$ and consider $\pi_{x}$ associated with $\nu^{F}_{x}$.
 There are two steps. \\

\textbf{Step 1: $(\mathcal{M}_{x}^{n})_{n\geq N}$ is uniformly bounded.} Note that the norm of operators of $\mathcal{M}_{x}^{n} $ is less or equal than the norm of $\mathcal{M}_{x}^{n}(\textbf{1}_{ \overline{X} })$. Recall that  $$\mathcal{M}_{x}^{n}(\textbf{1}_{ \overline{X} })=T^{n}_{x}$$ where $T^{n}_{x}$ is the sequence of operators defined in (\ref{operator}).
Proposition  \ref{uniform} completes the first step.

\textbf{Step 2: Computation of the limit of $(\mathcal{M}_{x}^{n})_{n \in\mathbb{ N}^{*}}$}. As in \cite{BM} and in \cite{Boy}, the sequence  $(\mathcal{M}_{x}^{n})_{n\in \mathbb{N}^{*}}$  has actually one accumulation point (with respect to the weak* topology of  $\mathcal{L}\big( C(\overline{X}),\mathcal{B}(L^{2}(\partial X,\nu^{F}_{x}))\big)$ that we denote by $\mathcal{M}^{\infty}_{x}$. We shall compute it:

  Since we assume that the Gibbs measure is mixing it follows from Proposition \ref{applicationroblin} and from the definition (\ref{mesurelimite}) of $\mathcal{M}_{x}$ that for all Borel subsets $U,V,W\subset \partial X$ satisfying $\nu^{F}_{x}(\partial U)=\nu^{F}_{x}(\partial V)=\nu^{F}_{x}(\partial W)=0$ we have $$\langle \mathcal{M}^{\infty}_{x}(\chi_{\mathcal{C}_{x}(U)})\chi_{V},\chi_{W}\rangle=\nu^{F}_{x}(U \cap W)\nu^{F}_{x}(V)=\langle \mathcal{M}_{x}(\chi_{\mathcal{C}_{x}(U)})\chi_{V},\chi_{W}\rangle.$$
  Observe also that the above equality holds for all balls of the space $X$ instead of $C_{x}(U)$ and everything is null in this case. Since $\lbrace \mathcal{C}_{x}(U)| U \subset \partial X \mbox{ such that } \nu^{F}_{x}(\partial U)=0 \rbrace$ together with the balls of $X$ generate the Borel $\sigma$-algebra of $\overline{X}$ and since the equality holds for all Borel subsets having zero measure boundary Lemma \ref{denseL2}  completes the proof. 
\end{proof}

\begin{proof}[Proof of Corollary B]
Observe that $\mathcal{M}_{x}(\textbf{1}_{\overline{X}})$ is the orthogonal projection onto the space of constant functions and apply the definition of weak$^{*}$ convergence to the triple $(\textbf{1}_{\overline{X}},  \xi , \eta)$ for $\xi,\eta \in L^{2}(\partial X, \nu^{F}_{x})$.
\end{proof}

\begin{proof}[Proof of Corollary C]
Since $(\pi_{\nu_{x}^F})_{x\in X}$ are unitarily equivalent, it suffices to prove irreducibility for some $\pi_{\nu_{x}^{F}}$ with $x$ in $ X$. We pick $x$ in $CH(\Lambda_{\Gamma})$. Since $\widetilde{F}$ is cohomologuous to a symmetric potential by Lemma \ref{cohomo} we can assume that $\widetilde{F}$ itself is symmetric. Therefore Theorem A shows that the vector $\mathbf{1}_{\partial X}$ is cyclic for the representation $\pi_{\nu^{F}_{x}}$ by applying the weak$^{*}$ convergence to the triple $(f,  \mathbf{1}_{\partial X} , \eta)$. 
Moreover, Corollary B shows that the orthogonal projection onto the space of constant functions is in the von Neumann algebra associated with $\pi_{\nu^{F}_{x}}$. Thus, the argument of \cite[Lemma 6.1]{LG} completes the proof. 

\end{proof}

Before giving the proof of Theorem D we recall that an operator $T\in \mathcal{B}(\mathcal{H})$ -where $\mathcal{H}=L^{2}(X,m)$ is a Hilbert space  for some measure space $(X,m)$- is a \emph{positive operator} if it preserves $\mathcal{H}^{+}$ the cone of positive functions. For example, any quasi-regular representation is a positive operator as well as the operators we consider in (\ref{operator}). 

\begin{proof}[Proof of Theorem D]
The implications: $(2) \Rightarrow  (3) \Rightarrow  (4)  \Rightarrow  (1)$ follow from Proposition \ref{equivcohomo}. \\We only have to prove $(1) \Rightarrow(2)$. We follow a standard method, see for example \cite[Lemma 7.3]{LG}:\\
Let $\pi_{F}:=\pi_{\nu_{x}^{F}}$ and $\pi_{G}:=\pi_{\nu_{x}^{G}}$ be equivalent unitary representations associated with $\nu_{x}^{F}$ and $\nu_{x}^{G}$, with $\widetilde{F}$ and $\widetilde{G}$ two symmetric potentials.  There exists $U$ a unitary operator from $L^{2}(\partial X, \nu^{F}_{x})$ to $L^{2}(\partial X, \nu^{G}_{x}) $ satisfying

$$U\pi_{F}=\pi_{G}U.$$ The map $$\Phi: T\in W^{*}_{\pi_{G}}(\Gamma) \mapsto U^{*} T U \in W^{*}_{\pi_{F}}(\Gamma) $$ is a spatial isomorphism of von Neumann algebras. It follows from the irreducibility of these representations (Corollary C) that the von Neumann algebras $W^{*}_{\pi_{F}}(\Gamma)= \mathcal{B}(L^{2}(\partial X, \nu^{F}_{x}))$ and $W^{*}_{\pi_{G}}(\Gamma)= \mathcal{B}(L^{2}(\partial X, \nu^{G}_{x}))$. 
Consider now the maximal abelian von Neumann algebras\\ $L^{\infty}(\partial X, \nu^{F})\subset \mathcal{B}(L^{2}(\partial X, \nu^{F}_{x}))$ and $L^{\infty}(\partial X, \nu^{G})\subset \mathcal{B}(L^{2}(\partial X, \nu^{G}_{x}))$ acting on $L^{2}$ by multiplication. Now observe that
the set of projections $$\lbrace p\in  \mathcal{B}(L^{2}(\partial X, \nu^{F}_{x})) \mbox{ such that $p$ and $1-p$ are orthogonal positive projections} \rbrace $$ is equal to the set 
$$ \lbrace \chi_{B}\mbox{ where $B$ is a Borel subset of $\partial X$}\rbrace .$$
 
Since the isomorphism $\Phi$ preserves the cone of positive operators 
 and since $L^{\infty}(\partial X,\nu^{F})$ is generated by its projections $\chi_{B}$ with $B$ Borel subsets, the automorphism $\Phi$ restricts to an algebra isomorphism from $\Phi:L^{\infty}(\partial X ,\nu^{G})\rightarrow L^{\infty}(\partial X, \nu^{F})$. It is well known that there exists $\varphi: (\partial X,\nu^{F})\rightarrow (\partial X,\nu^{G})$  a measure class preserving Borel isomorphism such that $\Phi(f)=f\circ \varphi $ for all $f\in L^{\infty}(\partial X ,\nu^{G})$. Therefore $\nu^{G}$ and $\nu^{F}$ are in the same class.
 \end{proof}


\begin{thebibliography}{99}
\bibitem{A1}
S. Adams, \emph{Boundary amenability for word hyperbolic groups and an application to smooth dynamics of simple groups.} Topology 33 (1994), no. 4, 765-783.

\bibitem{Adams}
S. Adams, George A. Elliott, T. Giordano,
\emph{Amenable actions of groups.} 
Trans. Amer. Math. Soc. 344 (1994), no. 2, 803-822. 

 \bibitem{Alv}
 S. Alvarez, \emph{A Fatou theorem for $F$-harmonic functions}, arXiv 1407.0679, 2014. 

\bibitem{Ana}
C. Anantharaman-Delaroche, 
\emph{On spectral characterizations of amenability.} 
Israel J. Math. 137 (2003), 1-33. 

\bibitem{ADR}
C. Anantharaman-Delaroche; J. Renault,  \emph{Amenable groupoids.} Groupoids in analysis, geometry, and physics (Boulder, CO, 1999), 35-46, Contemp. Math., 282, Amer. Math. Soc., Providence, RI, 2001.
 \bibitem{An}
 J.-P. Anker, \emph{La forme exacte de l'estimation fondamentale de Harish-Chandra. (French) [The exact form of Harish-Chandra's fundamental estimate]} C. R. Acad. Sci. Paris S�r. I Math. 305 (1987), no. 9, 371-374. 

\bibitem{B}
Rufus Bowen, \emph{Equilibrium states and the ergodic theory of Anosov diffeomorphisms.}  Springer-Verlag, Berlin, 1975. Lecture Notes in Mathematics, Vol. 470.

\bibitem{Bab}  
M. Babillot, \emph{Points entiers et groupes discrets: de l'analyse aux syst\`emes dynamiques.} Panor. Synth\`eses, 13, Rigidit\'e, groupe fondamental et dynamique, 1-��119, Soc. Math. France, Paris, 2002.  
  
  \bibitem{BM}
U. Bader, R. Muchnik, \emph{Boundary unitary representations, ��irreducibility and rigidity.} J. Mod. Dyn. 5 (2011), no. 1, 49-��69.  
 
  
 \bibitem{BC}
M.-E. B. Bekka, M. Cowling. \emph{Some irreducible unitary representations of G(K) for a simple algebraic group G over an algebraic number field K.} Math. Zeitschrift, 241(4):731-741, 2002.

\bibitem{BHV}
B. Bekka, P. de la Harpe, A. Valette, \emph{Kazhdan's property (T).} New Mathematical Monographs, 11. Cambridge University Press, Cambridge, 2008.
\bibitem{BHM}
	 S. Blach\`ere, P. Ha\"issinsky, P. Mathieu,  \emph{Harmonic measures versus quasiconformal measures for hyperbolic groups.} Ann. Sci. Éc. Norm. Sup. (4) 44 (2011), no. 4, 683-721.	

\bibitem{Bou}
	M. Bourdon, \emph{Structure conforme et flot g\'eod\'esique d'un \rm{CAT(-1)}-espace}, L'enseignement math\'ematique (1995).
	\bibitem{Boy}
	A. Boyer, \emph{Equidistribution, Ergodicity, Irreducibility in CAT(-1) spaces},
	arXiv:1412.8229 \bibitem{BMo}
M. Burger, S. Mozes, \emph{${\rm CAT}$(-$1$)-spaces, divergence groups and their commensurators.} J. Amer. Math. Soc. 9 (1996), no. 1, 57-93.

\bibitem{BH}	M.R. Bridson; A. Haefliger, \emph{Metric spaces of non-positive curvature.} Grundlehren der Mathematischen Wissenschaften [Fundamental Principles of Mathematical Sciences], 319. Springer-Verlag, Berlin, 1999. 



\bibitem{CM}
C. Connell; R. Muchnik, \emph{Harmonicity of quasiconformal measures and Poisson boundaries of hyperbolic spaces.} Geom. Funct. Anal. 17 (2007), no. 3, 707-769.
\bibitem{CM2}C. Connell; R. Muchnik, \emph{Harmonicity of Gibbs measures.} Duke Math. J. 137 (2007), no. 3, 461-509.

\bibitem{Co}
M. Coornaert, \emph{Mesures de Patterson-Sullivan sur le bord d'un espace hyperbolique au sens de Gromov.} (French) [Patterson-Sullivan measures on the boundary of a hyperbolic space in the sense of Gromov] Pacific J. Math. 159 (1993), no. 2, 241-270. 

\bibitem{CS}
M. Cowling, T. Steger, \emph{The irreducibility of restrictions of unitary representations of
lattices.} J. f\"ur die reine und Angew. Math., 420:85-98, 1991.
\bibitem{D}
 F. Dal'bo, \emph{ Remarques sur le spectre des longueurs d'une surface et comptages. (French) [Remarks on the length spectrum of a surface, and counting] }Bol. Soc. Brasil. Mat. (N.S.) 30 (1999), no. 2, 199-221. 

\bibitem{Dix}
J. Dixmier, \emph{Les C*-alg\`ebres et leurs repr\'esentations} Gauthier-Villars,
1969.

\bibitem{F}
Harry Furstenberg,\emph{ Poisson formula for semi-simple Lie groups.} Ann. of Math. (2), 77:335-€"386, 1963.

\bibitem{F1}
A. Fig\`a-Talamanca, M.-A. Picardello, \emph{Spherical functions and harmonic analysis on free groups.} J. Funct. Anal. 47 (1982), no. 3, 281–304.


\bibitem{F2}
 A. Fig\`a-Talamanca; T. Steger, \emph{Harmonic analysis for anisotropic random walks on homogeneous trees.} Mem. Amer. Math. Soc. 110 (1994), no. 531, xii+68 pp.

\bibitem{F3}
A. Fig\`a-Talamanca, M.-A. Picardello, \emph{Harmonic analysis on free groups.} Lecture Notes in Pure and Applied Mathematics, 87. Marcel Dekker, Inc., New York, 1983.


\bibitem{GV}
	R. Gangolli, V.-S. Varadarajan, \emph{Harmonic Analysis of Spherical Functions on Real Reductive Groups}, Springer-Verlag, New-York, 1988.
\bibitem{LG}
  L. Garncarek, \emph{Boundary representations of hyperbolic groups},  arXiv:1404.0903, 2014.

\bibitem{H}
	U. Haagerup, \emph{An example of a nonnuclear $C^*$-algebra which has the metric approximation property}, Invent. Math. 50 (1978/79), no. 3, 279-293.	
	\bibitem{BMH}
	 S. Blach\`ere, P. Ha\"issinsky, P. Mathieu,  \emph{Harmonic measures versus quasiconformal measures for hyperbolic groups.} Ann. Sci. \'Ec. Norm. Sup\'er. (4) 44 (2011), no. 4, 683-721.	
	\bibitem{Ha}	 P. de la Harpe,  \emph{ Groupes hyperboliques, alg\`ebres d'op\'erateurs et un th\'eor\`eme de Jolissaint. (French) [Hyperbolic groups, operator algebras and Jolissaint's theorem]}, C. R. Acad. Sci. Paris S�r. I Math. 307 (1988), no. 14, 771-774.
		\bibitem{Jo}
	P. Jolissaint, \emph{Rapidly decreasing functions in reduced $C^*$-algebras of groups}, Trans. Amer. Math. Soc. 317 (1990), no. 1, 167-196.	
  \bibitem{GN}
  A. Gorodnik; A. Nevo, \emph{The ergodic theory of lattice subgroups.} Annals of Mathematics Studies, 172. Princeton University Press, Princeton, NJ, 2010.
 
 \bibitem{Ka}
  V.-A, Kaimanovich, \emph{ Boundary amenability of hyperbolic spaces.} Discrete geometric analysis, 83-111, Contemp. Math., 347, Amer. Math. Soc., Providence, RI, 2004.
 
  \bibitem{K} M.-G. Kuhn, \emph{Amenable actions and weak containment of certain representations of discrete groups.} Proc. Amer. Math. Soc. 122 (1994), no. 3, 751-757.
 
  \bibitem{KS}
  G. Kuhn, T. Steger,\emph{
More irreducible boundary representations of free groups.} 
Duke Math. J. 82 (1996), no. 2, 381-436. 
   \bibitem{Led} F. Ledrappier, \emph{Structure au bord des vari\'et\'es \`a courbure n\'egative.} (French) [Boundary structure of negatively curved manifolds] S\'eminaire de Th\'eorie Spectrale et G\'eom\'etrie, No. 13, Année 1994-1995, 97-122, S\'emin. Th\'eor. Spectr. G\'eom., 13, Univ. Grenoble I, Saint-Martin-d'H\`eres, 1995. 
   
   
   \bibitem{M}
   G.-A. Margulis, \emph{On some aspects of the theory of Anosov systems.} Springer Monographs in Mathematics, Springer-Verlag, Berlin, 2004, With a survey by Richard Sharp: Periodic orbits of hyperbolic flows, Translated from the Russian by Valentina Vladimirovna Szulikowska.
   
   \bibitem{Mo}
   O. Moshen, \emph{ Le bas du spectre d'une vari\'et\'e hyperbolique est un point selle.} Ann. Sci. \'Ecole Norm. Sup. 40 (2007) 191-207.
    \bibitem{Pa}
S.-J. Patterson,  \emph{The limit set of a fuchsian group}, Acta mathematica
136, 241-273 (1976)	
\bibitem{Pau}  
F. Paulin, M. Pollicott, B. Schapira \emph{Equilibrium states in negative curvature} to appear in Ast\'erisque 373, Soc. Math. France


		\bibitem{Ro}
	T. Roblin, \emph{Ergodicit\'e et \'equidistribution en courbure n\'egative} M\'emoires de la SMF 95 (2003).
	
	 \bibitem{R}
 Ryan, Raymond A. \emph{Introduction to tensor products of Banach spaces.} Springer Monographs in Mathematics. Springer-Verlag London, Ltd., London, 2002. 
	
	\bibitem{S}
	Y. Shalom, \emph{Rigidity, unitary representations of semisimple groups, and fundamental groups of manifolds with rank one transformation group}, Ann. of Math. (2) 152
(2000), no. 1, 113-182.

	 \bibitem{Spa}
	   R.-J, Spatzier \emph{ An example of an amenable action from geometry. }Ergodic Theory Dynam. Systems 7 (1987), no. 2, 289-293.
	   \bibitem{SZ}
	   R.-J Spatzier; R.-J Zimmer. \emph{Fundamental groups of negatively curved manifolds and actions of semisimple groups. }Topology 30 (1991), no. 4, 591-601.
	\bibitem{St} E. Stein, \emph{M. Harmonic analysis: real-variable methods, orthogonality, and oscillatory integrals.} With the assistance of Timothy S. Murphy. Princeton Mathematical Series, 43. Monographs in Harmonic Analysis, III. Princeton University Press, Princeton, NJ, 1993.
	\bibitem{Su}
	D. Sullivan, \emph{The density at infinity of a discrete group of hyperbolic
motions}, Publications math\'ematiques de l'IHES 50 (1979), 171-202.

\end{thebibliography}
\end{document}